\newcommand{\bD}{\mathbf{D}}
\newcommand{\bx}{\boldsymbol{x}}
\newcommand{\by}{\boldsymbol{y}}
\newcommand{\sca}{\mathcal{A}}
\newcommand{\scd}{\mathcal{D}}
\newcommand{\scf}{\mathcal{F}}
\newcommand{\scw}{\mathcal{W}}
\newcommand{\scy}{\mathcal{Y}}
\newcommand{\scp}{\mathcal{P}}
\newcommand{\M}{\mathrm{M}}
\newcommand{\op}{\mathrm{op}}
\newcommand{\crc}{\mathscr{C}}
\newcommand{\crs}{\mathscr{S}}
\newcommand{\fd}{\mathfrak{D}}
\newcommand{\bc}{\mathbb C}
\newcommand{\ba}{\mathbb A}
\newcommand{\la}{\langle}
\newcommand{\ra}{\rangle}
\newcommand{\bs}{\backslash}
\newcommand{\lam}{\lambda}
\DeclareMathOperator{\Aut}{Aut}
\DeclareMathOperator{\Alg}{Alg}
\DeclareMathOperator{\res}{res}
\DeclareMathOperator{\End}{End}
\DeclareMathOperator{\ind}{ind}
\DeclareMathOperator{\GL}{GL}
\DeclareMathOperator{\Hom}{Hom}
\DeclareMathOperator{\Ind}{Ind}
\DeclareMathOperator{\SL}{SL}
\DeclareMathOperator{\Sh}{Sh}
\DeclareMathOperator{\St}{St}
\DeclareMathOperator{\Wh}{Wh}
\newcommand{\rres}{\mathrm{res}}
\newtheorem{Thm}{Theorem}[section]
\newtheorem{Prop}[Thm]{Proposition}
\newtheorem{Lem}[Thm]{Lemma}
\theoremstyle{definition}
\newtheorem{Def}[Thm]{Definition}
\theoremstyle{remark}
\newtheorem{Rem}[Thm]{Remark}
\title{Twisted doubling integrals for classical groups}
\author{Yuanqing Cai}
\address{Department of Mathematics, Kyoto University, Kitashirakawa Oiwake-cho, Sakyo-ku, Kyoto 606-8502, Japan}
\email{cai@math.kyoto-u.ac.jp}
\date\today
\thanks{This research was supported by the ERC, StG grant number 637912 and JSPS KAKENHI, grant number 19F19019.}
\subjclass[2010]{Primary 11F70; Secondary 11F55, 22E50, 22E55}
\keywords{Twisted doubling integrals, degenerate Whittaker coefficients, nilpotent orbits}
\begin{document}
\begin{abstract}
We describe the twisted doubling integrals of Cai-Friedberg-Ginzburg-Kaplan in a conceptual way. This also extends the construction to the quaternionic unitary groups. We carry out the unfolding argument uniformly in this article. To do so, we define a family of degenerate Whittaker coefficients that are suitable in this setup and study some of their properties. We also prove certain related global and local results that use the same tools.
\end{abstract}

\maketitle
\tableofcontents

\section{Introduction}

One of the fundamental problems in the theory of automorphic representations is to study analytic properties of automorphic $L$-functions. To this problem, Piatetski-Shapiro and Rallis \cite{PSR87} have discovered a family of zeta integrals which generalized the Godement-Jacquet zeta integral from $\GL(n)$ to an arbitrary simple classical group.  Their construction, known as the doubling method, includes the standard $L$-functions (twisted by Hecke characters) of all irreducible cuspidal automorphic representations of all simple classical groups.

The construction of Piatetski-Shapiro and Rallis was recently generalized in \cite{CFGK19}. This is a family of global integrals that represent the tensor product $L$-functions of a split classical group and a general linear group. (In what follows, this new family of integrals is referred as the \textit{twisted doubling integrals}.) The purpose of this article is to rephrase the twisted doubling integrals in a conceptual setting that is close to the description in \cite{PSR87}. This extends the construction to all classical groups, including the quaternionic unitary groups.

The main new ideas in \cite{CFGK19} are the use of the generalized Speh representations as the inducing data of a Siegel Eisenstein series and the use of a local and global model of degenerate type, which generalizes the Whittaker model. The global integral also uses a certain degenerate Whittaker coefficient (associated with nilpotent orbits) of a Siegel Eisenstein series. The Fourier coefficients that are used in the construction are indeed defined on the unipotent radicals of some parabolic subgroups. To give a conceptual description of the twisted doubling integrals, we first define a family of degenerate Whittaker coefficients that arise naturally in this setup in Sect. \ref{sec:degenerate Wh coeff}. This does not include all the possible degenerate Whittaker coefficients, but include sufficiently many of them to capture the nilpotent invariants of the representations involved.

The main body of this paper is to describe the twisted doubling integrals in a conceptual setup, and give a simplified and uniform version of the unfolding argument. Such an argument is already given in \cite{CFGK19} for symplectic groups. The basic ideas of unfolding are to break the sum in the Eisenstein series and rewrite the global integral as a sum over some double coset space. The technical part is to eliminate the contributions from almost all double cosets. The conceptual description does allow us to invoke some linear algebraic results (and avoid combinatorial argument) to give a neat presentation in some steps in the unfolding process. We refer the reader to Sect. \ref{sec:contribution in unfolding} for the tools we develop.

In this paper we also prove several relevant results. We calculate a certain Fourier coefficient of the Siegel Eisenstein series that are used in the global integrals. Our purpose is two-fold. First, such a calculation is necessary for the discussion of the normalization of intertwining operators, which is a key ingredient in the local theory of the twisted doubling integrals. Second, the calculation here is also simpler than the unfolding and serves as a test case for our method. We also prove analogous local results in the last section.

The twisted doubling integrals are expected to have far-reaching consequences. We would like to highlight a couple of follow-up directions of the present paper:
\begin{enumerate}
\item The Archimedean analogues of our local results are expected to be valid (at least for unitary representations) but the proofs require tools of different flavors. We plan to consider these results in a follow-up article.
\item The twisted doubling construction works in the case of covering groups (see \cite{Kaplan} in the case of covers of symplectic groups). The results and proofs presented in this paper can be extended to these cases with appropriate (non-trivial) changes. We plan to describe it in the framework of Brylinski-Deligne (BD) covering groups \cite{BD01}. This extension includes the case of BD covers of not necessarily split classical groups.

    The construction of Brylinski-Deligne only works for connected reductive groups but some of the classical groups considered in this paper are disconnected. It would also be interesting to see how the theory of Brylinski-Deligne can be extended to disconnected reductive groups.
\item One can further develop the local theory of the twisted doubling integrals to give a definition of $L$-factors and $\varepsilon$-factors of tensor type. In the original doubling integral setup, this is completed in \cite{LR05, Yamana14, Gan12, Kakuhama}. For twisted doubling integrals, this is done via $\gamma$-factors (following \cite{LR05}) for symplectic and special orthogonal groups in \cite{CFK}. (In \cite{CFK}, the twisted doubling integrals were also extended to the general spin groups). It is also desirable to define these factors as the greatest common divisors of the local integrals when the data vary. This will help locate the poles of the global tensor product $L$-functions. Globally, it will be interesting to see whether a new functoriality result for not necessarily quasi-split classical groups can be obtained using analytic properties of these $L$-functions and the Converse Theorem.
\item Some of the tools here can be used to calculate Fourier coefficients of general Eisenstein series on classical groups. The case we treat here is only a very particular instance. For the case treated in this paper, with some local analysis, one can prove certain cases of conjectures in \cite{Ginzburg06} regarding the nilpotent orbits attached to Eisenstein series. We hope to extend it to the general setup in a future article.
\end{enumerate}

The rest of this article is organized as follows. Sect. \ref{sec:groups} introduces the classical groups considered in this paper. In Sect. \ref{sec:degenerate Wh coeff} -- \ref{sec:general linear groups}, we define a special class of degenerate Whittaker coefficients and discuss some preliminary results that are used in the paper. We introduce the doubling variables in Sect. \ref{sec:doubling variables}. In Sect. \ref{sec:eisenstein series} we introduce the Siegel Eisenstein series and calculate a certain Fourier coefficient, as mentioned previously. The twisted doubling integrals are introduced in Sect. \ref{sec:twisted doubling} and the unfolding process is carried out in Sect. \ref{sec:unfolding}. Sect. \ref{sec:local results} proves some analogous local results.

\subsection*{Acknowledgment}
The author would like to thank Dmitry Gourevitch for very helpful conversations regarding degenerate Whittaker models and the theory of distributions. The author would also like to thank the referee for useful comments. Part of this work was carried out when the author was a postdoctoral fellow at the Weizmann Institute of Science. The author would like to thank the Institute for providing an excellent working environment.

\section{Preliminaries}

\subsection{Classical groups}\label{sec:groups}

We follow the setup as in \cite{Yamana14} Sect. 2.2.

The symbol $F$ will be used to denote a local field or global field of characteristic zero. If $F$ is a number field, then we denote by $\ba=\ba_F$ the adele ring of $F$, %by $C_F$ the idele classes of $F$
and by $\psi_F$ a nontrivial additive character character of $F\bs\ba$. If $F$ is local, then we
%let $C_F=F^\times$ and
fix a nontrivial additive character $\psi_F$ of $F$.

By an involution of an algebra $D$ whose center $E$ contains $F$, we mean an arbitrary anti-automorphism $\rho$ of $D$ of order two under which $F$ is the fixed subfield of $E$. We denote the restriction of $\rho$ to $E$ also by $\rho$. We take a couple of $(D,\rho)$ belonging to the following five types:

\begin{enumerate}[(a)]
  \item $D=E=F$ and $\rho$ is the identity map;
  \item $D$ is a division quaternion algebra over $E=F$ and $\rho$ is the main involution of $D$;
  \item $D$ is a division algebra central over a quadratic extension $E$ of $F$ and $\rho$ generates $\mathrm{Gal}(E/F)$;
  \item $D=\M_2(E)$, $E=F$ and $\begin{pmatrix} a & b \\ c & d \end{pmatrix}^\rho = \begin{pmatrix} d & -b \\ -c & a \end{pmatrix}$;
  \item $D=\bD\oplus \bD^{\op}$, $E=F\oplus F$ and $(x,y)^\rho=(y,x)$, where $\bD$ is a division algebra central over $F$ and $\bD^{\op}$ is its opposite algebra.
\end{enumerate}

If $E=F$, we set $\psi=\psi_F$; if $E/F$ is an \'etale quadratic algebra, we set $\psi=\psi_F\circ \mathrm{tr}_{E/F}$. The global version is defined similarly. If $x$ is a square matrix with coordinates in $D$, then $\nu(x)\in E$ and $\tau(x)\in E$ stand for its reduced norm and reduced trace to the center $E$ of $D$.

We assume $D$ to be division if $F$ is a number field, so that $D$ is of type (d) (resp. (e)) will appear in our later discussion as a localization of a global $D$ of type (b) (resp. (c)).

Let $\epsilon$ be either $1$ or $-1$. We fix once and for all the triple $(D,\rho,\epsilon)$.

Let $W$ be a free left $D$-module of rank $n$. By an $\epsilon$-skew hermitian space we mean a structure $\scw=(W,\la\ ,\ \ra)$, where $\la \ , \ \ra$ is a $\epsilon$-skew hermitian form on $W$, that is, an $F$-bilinear map $\la \ , \ \ra: W\times W\to D$ such that
\[
\la x, y\ra^\rho = -\epsilon\la y , x \ra,\qquad \la ax,by\ra=a\la x,y\ra b^\rho,  \ (a,b\in D; \ x,y \in W).
\]
Such a form is called non-degenerate if $\la x ,W\ra=0$ implies that $x=0$. We assume that $\la \ , \ \ra$ is non-degenerate.

We denote the ring of all $D$-linear endomorphisms of $W$ by $\End_D(W)$ and set $\GL_D(W)=\End_D(W)^\times$. Note that $\GL_D(W)$ acts on $W$ on the right. We sometimes write $\GL_{W;D}$ for $\GL_D(W)$ for ease of notations. %When $D=F$, we omit $D$ and write simply $\GL(W)$ for ease of notations.
Let
\[
G=
\{
g\in \GL_D(W): \la xg,yg\ra = \la x,y\ra \text{ for all }x,y\in W
\}
\]
be the unitary group of $(W,\la \ , \ \ra)$, which is (possibly disconnected) reductive algebraic group defined over $F$. It is important to realize that $G$ always comes together with a space $W$ and a form $\la \ , \ \ra$. We usually just speak of $G$ and let the data $\scw=(W,\la\ ,\ \ra)$ be implicitly understood. When the dependence of $G$ on $\scw$ needs to be stressed, we write $G=G(\scw)$. If $F$ is local, then we shall deal with the representations of the group of $F$-rational points of $G$, while if $F$ is global, then we consider the localization and adelization of $G$.

We now give a list of notations that will be used repeatedly in the paper.
\begin{itemize}
  \item $[G]$: for an algebraic group $G$ over a number field $F$, we usually write $[G]$ for $G(F)\bs G(\ba)$ to save space.
  % \item $[G]$: for an algebraic group $G$ over a local field $F$, we sometimes write $[G]=G(F)$ in order to unify notations.
  \item $G=G(F)$: for an algebraic group $G$ over a local field $F$, we usually write $G$ for $G(F)$ if there is no confusion.
  \item $g^{-1}(\pi)$: Let $H$ be a subgroup of $G$ and $(\pi,V)$ be a representation of $H$. Let $g\in G$. We define a representation $(g^{-1}(\pi),V)$ of the group $g^{-1}Hg$ whose action is given by $g^{-1}(\pi)(g^{-1}hg)v=\pi(h)v$ for $v\in V$.
  \item $\Ind$ and  $\ind$: induction and compact induction, both are normalized.
  \item $G^{ab}$: for a group $G$, $G^{ab}$ denotes its maximal abelian quotient.
  % I think this is better than $G/[G,G]$. Confusing with $[G]$.
  \item We use $1$ to denote the identity element in a group.
  \item $\dim W=\dim_D W$: rank of a free left $D$-module $W$.
 % \item $\dim$: for a free $D$-module $W$, $\dim W$ is the rank of $W$ over $D$.
\end{itemize}

\subsection{Some degenerate Whittaker coefficients}\label{sec:degenerate Wh coeff}

In this section we define a family of unipotent subgroups and characters. Let $R=F\bs \ba$ when $F$ is a number field and $R=F$ when $F$ is a local field. In either case, the continuous $F$-dual of $R$ is isomorphic to $F$.

\subsubsection{General linear groups}
We start with the case of $\GL_{W;D}$.
Let
\[
\scy:0\subset Y_1\subset Y_2\subset \cdots \subset Y_k\subset W
\]
be a flag of distinct subspaces of $W$. We sometimes write $Y_0=\{0\}$ and $Y_{k+1}=W$ for convenience. The stabilizer of $\scy$ is a parabolic subgroup $P(\scy)=M(\scy)\cdot N(\scy)$ with Levi component $M(\scy)$. Then as algebraic groups,
\[
N(\scy)^{ab}\cong \prod_{i=1}^k \Hom_D(Y_{i+1}/Y_i,Y_i/Y_{i-1}),\qquad u\mapsto (u_i)_{i=1}^{k}.
\]
To give a character of $N(\scy)(R)$, we specify an element in
\[
\sca=(A_1,\cdots, A_k)\in \prod_{i=1}^k \Hom_D(Y_{i}/Y_{i-1},Y_{i+1}/Y_i).
\]
More concretely, given such an $\sca$, we define a character $\psi_{\sca}$ of $N(\scy)(R)$ by
\[
\psi_{\sca}(u)=\psi\left(\sum_{i=1}^k \tau(u_i\circ A_i)\right).
\]

\subsubsection{Classical groups}\label{sec:FC of classical}
We now consider the case of $G(\scw)$ with a non-degenerate $\epsilon$-hermitian form.  We make the following assumptions:
\begin{itemize}
  \item $\dim W$ is even;
  \item $W$ admits a complete polarization $W=X\oplus X'$.
\end{itemize}
Let
\[
\scy:0\subset Y_1\subset Y_2\subset \cdots \subset Y_k
\]
be a flag of distinct totally isotropic subspaces of $W$. Let $N(\scy)$ be its unipotent radical. We now have two cases to consider:

Case 1: $Y_k$ is not maximally totally isotropic.

In this case, the flag $\scy$ can be extended to a long flag
\[
0\subset Y_1\subset \cdots \subset Y_k\subset Y_k^\perp \subset \cdots \subset Y_1^\perp \subset W.
\]
We put $Y_{k+i}=Y_{k-i+1}^\perp$ for $i=1,\cdots, k+1$ in order to have a uniform description occasionally. Then as algebraic groups
\[
N(\scy)^{ab}\simeq \prod_{i=1}^k \Hom_D(Y_{i+1}/Y_i,Y_i/Y_{i-1}).
\]
In this case, we use
\[
\sca=(A_1,\cdots,A_k)\in \prod_{i=1}^k \Hom_D(Y_{i}/Y_{i-1},Y_{i+1}/Y_i)
\]
to give a character of $N(\scy)(R)$ as in the general linear case. Note that we can also define $A_i\in \Hom_D(Y_{i+1}/Y_i,Y_i/Y_{i-1})$ for $i=k+1,\cdots, 2k$ as minus the $\rho$-dual of  $A_{2k+1-i}$. The same comment applies to $u_i$ as well.

Case 2: $Y_k$ is maximally totally isotropic. This case is more complicated. It is used in Sect. \ref{sec:eisenstein series}. In this case, the flag can be extended to a long flag
\[
0\subset Y_1\subset \cdots \subset Y_k\subset Y_{k-1}^\perp \subset \cdots \subset Y_1^\perp \subset W.
\]
We put $Y_{k+i}=Y_{k-i}^\perp$ for $i=1,\cdots, k$.
The $\epsilon$-hermitian form on $W$ induces a $\epsilon$-hermitian form on $Y_{k-1}^\perp/Y_{k-1}$. The space $Y_k/Y_{k-1}$ is a maximal totally isotropic subspace of $Y_{k-1}^\perp/Y_{k-1}$. We assume that $N(Y_{k+1}/Y_k) \subset G(Y_{k-1}^\perp/Y_{k-1})$ is nonzero. Then the map $u\mapsto u-1$ induces an injection $N(Y_{k+1}/Y_k)\to \Hom_D(Y_{k+1}/Y_k,Y_k/Y_{k-1})$. We denote the image as $\Hom_D^\ast(Y_{k+1}/Y_k,Y_k/Y_{k-1})$.

We have
\[
N(\scy)^{ab}\simeq \left( \prod_{i=1}^{k-1} \Hom_D(Y_{i+1}/Y_i,Y_i/Y_{i-1})\right)\times \Hom_D^\ast(Y_{k+1}/Y_k,Y_k/Y_{k-1}) .
\]
%Here the first term is the Lie algebra $\fn(Y_k/Y_{k-1})\subset \fg(Y_{k-1}^\perp/Y_k)$.
We choose an element
\[
\sca=(A_1,\cdots,A_{k-1},A_k)\in \left( \prod_{i=1}^{k-1} \Hom_D(Y_{i}/Y_{i-1},Y_{i+1}/Y_i)\right) \times \Hom_D(Y_k/Y_{k-1},Y_{k+1}/Y_k)
\]
to define a character of $N(\scy)(R)$.
Note that different $A_k$ might give the same character. %is also a `$\epsilon$-hermitian element'.
We can define $A_i,u_i$ for $i> k$ as in the previous case.

\begin{Rem}
It is possible that $\Hom_D^\ast(Y_{k+1}/Y_k,Y_k/Y_{k-1})=\{0\}$. In this case, $N(\scy)^{ab}$ admits a different description. We will not encounter this case in this paper. Such a case is already treated in \cite{LR05} Sect. 6.
\end{Rem}

\subsubsection{Fourier coefficients}
Assume now we have a pair $(N(\scy),\psi_\sca)$. Globally, for an irreducible automorphic representation $\pi$ of $G(\ba)$ or $\GL_{W;D}(\ba)$, we define the $(N(\scy),\psi_\sca)$-Fourier coefficient of $\phi\in \pi$ as
\[
\phi^{N(\scy),\psi_\sca}(g)=\int\limits_{[N(\scy)]} \phi(ug)\psi_{\sca}(u) \ du.
\]
Locally, we consider the space $\Hom_{N(\scy)(F)}(\pi,\psi_{\sca})$ of $(N(\scy),\psi_{\sca})$-functional for an admissible representation $\pi$ of $G(F)$ or $\GL_{W;D}(F)$. In the local non-Archimedean case, we also consider the twisted Jacquet module $J_{N(\scy)(F),\psi_{\sca}}(\pi)$.

\begin{Rem}
The coefficients defined here are degenerate Whittaker coefficients in the sense of \cite{MW87,GGS17}, for a suitable choice of Whittaker pair.
\end{Rem}

\subsection{Contributions in the unfolding argument}\label{sec:contribution in unfolding}
The result in this section is motivated by the following question. We describe the question globally. Let $G=G(\scw)$ be a classical group satisfying the assumptions in Sect. \ref{sec:FC of classical}. Let $\pi$ be an irreducible automorphic representation of $\GL_{X;D}(\ba)$. Consider the normalized induced representation $\Ind_{P(X)(\ba)}^{G(\ba)}(\pi\cdot \nu^s)$. Let $\tilde\phi^{(s)}$ be a section of $\Ind_{P(X)(\ba)}^{G(\ba)}(\pi\cdot \nu^s)$ and $\phi^{(s)}(g)=\tilde\phi^{(s)}(g;1)$ denote the value at the identity. One can define an Eisenstein series $E(\phi^{(s)})$ via
\[
E(\phi^{(s)})(g)=\sum_{\gamma \in P(X)(F)\bs G(F)}\phi^{(s)}(\gamma g)
\]
when $\Re s\gg 0$. We would like to calculate the Fourier coefficient
\[
E(\phi^{(s)})^{N(\scy),\psi_\sca}(g)=\int\limits_{[N(\scy)]} E(\phi^{(s)})(ug) \psi_{\sca}(u) \ du
\]
when $\Re s\gg0$.
By a standard unfolding argument, this becomes a sum over the double coset space $P(X)(F)\bs G(F)/N(\scy)(F)$. The quotient $P(X)(F)\bs G(F)$ can be identified with the variety $\Omega(W)$ of maximal totally isotropic subspaces of $W$. Thus each contribution can be indexed by an element in $\Omega(W)$. For instance, if $L=X\gamma$, then the contribution corresponds to this orbit contains
\[
\int\limits_{[P(L)\cap N(\scy)]} \phi^{(s)}(ug) \psi_{\sca}(u) \ du
\]
as an inner integral. The results presented in this section can be used to show that the contributions from certain orbits are zero.

\begin{Prop}\label{prop:first vanishing criterion}
Let $L$ be a maximal totally isotropic subspace of $W$. The following are equivalent:
\begin{enumerate}[\normalfont(1)]
  \item $\psi_{\sca}|_{(N(L)\cap N(\scy))(R)}$ is trivial;
  \item  $\tau(u_i\circ A_i)= 0$ for all $u\in (N(L)\cap N(\scy))(R)$ and $i=1,\cdots, 2k$ or $2k+1$;
  \item $A_i(L\cap Y_i/L\cap Y_{i-1})\subset L\cap Y_{i+1}/L\cap Y_{i}$ for all $i=1,\cdots, 2k$ or $2k+1$.
\end{enumerate}
\end{Prop}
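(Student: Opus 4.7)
The plan is to (a) identify the image of $N(L) \cap N(\scy)$ in $N(\scy)^{ab}$ as a product of vector subspaces, one for each index $i$, (b) use this product structure and nontriviality of $\psi$ to decouple the triviality condition on the character, and (c) interpret each resulting condition via trace duality.

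Write $V_i = Y_i/Y_{i-1}$ and $L_i = (L \cap Y_i + Y_{i-1})/Y_{i-1} \cong (L \cap Y_i)/(L \cap Y_{i-1})$. For $u \in N(L) \cap N(\scy)$, the operator $u-1$ respects the filtration, has image contained in $L$, and vanishes on $L$, so its $i$-th graded component $u_i \in \Hom_D(V_{i+1}, V_i)$ automatically factors as a map $V_{i+1}/L_{i+1} \to L_i$. Let $U_i \subset \Hom_D(V_{i+1}, V_i)$ denote this subspace. A standard lifting argument on the unipotent group $N(L) \cap N(\scy)$ (building $u$ successively on the graded pieces of $\scy$) shows conversely that every tuple $(u_i) \in \prod_i U_i$ lifts, so the image of $(N(L) \cap N(\scy))(R)$ in $N(\scy)^{ab}(R)$ is exactly $\prod_i U_i(R)$.

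With this in hand, $(1)\Leftrightarrow(2)$ is formal: triviality of $u \mapsto \psi\bigl(\sum_i \tau(u_i \circ A_i)\bigr)$ on the product, combined with the freedom to support $u$ on a single factor $U_i$ and to scale by $c \in F$, forces $\tau(u_i \circ A_i) = 0$ for every $i$ and every $u_i \in U_i$, using that $\psi$ is nontrivial on $R$. For $(2)\Leftrightarrow(3)$, fix $i$ and choose $D$-linear complements $V_i = L_i \oplus L_i'$ and $V_{i+1} = L_{i+1} \oplus L_{i+1}'$. Write $A_i$ in block form with lower-left block $c : L_i \to L_{i+1}'$; the condition $A_i(L_i) \subset L_{i+1}$ is precisely $c=0$. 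A generic $u_i \in U_i$ is an arbitrary $D$-linear map $e : L_{i+1}' \to L_i$ extended by zero on $L_{i+1}$, and the block multiplication gives $\tau(u_i \circ A_i) = \tau(e \circ c)$. Nondegeneracy of the trace pairing $\Hom_D(L_i, L_{i+1}') \times \Hom_D(L_{i+1}', L_i) \to E$ then yields $c=0$, which is exactly condition (3).

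The indices $i>k$ in Case 1, and the middle index $i=k$ in Case 2, require separate bookkeeping. In Case 1, the relation $A_{2k+1-i} = -A_i^\rho$ together with the maximality $L=L^\perp$ makes condition (3) at $i$ equivalent to condition (3) at $2k+1-i$, so only indices $\le k$ need to be verified independently. In Case 2, the middle term is handled by the same trace-duality argument carried out inside the form space $Y_{k-1}^\perp/Y_{k-1}$, where $Y_k/Y_{k-1}$ is a maximal isotropic containing $L_k$ and the subspace $U_k$ is cut out by the extra $\Hom_D^\ast$ condition. The main obstacle is Step 1, i.e. proving surjectivity of the abelianization map $N(L) \cap N(\scy) \to \prod_i U_i$, together with the careful treatment of the middle index in Case 2; once the image is a product of vector groups, the remainder of the proof reduces to linear algebra over $D$.
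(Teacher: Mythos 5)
Your approach is essentially the paper's: identify that the $i$-th graded component $u_i$ of $u\in N(L)\cap N(\scy)$ automatically factors through $V_{i+1}/L_{i+1}\to L_i$ (so lies in a subspace $U_i$), use nondegeneracy of the trace pairing for $(2)\Leftrightarrow(3)$, and observe the $\rho$-duality relating $i$ and $2k+1-i$. The paper's $(2)\Leftrightarrow(3)$ step is phrased without explicit block decompositions — for $(3)\Rightarrow(2)$ it notes that $u_i\circ A_i$ has image contained in its kernel, hence is nilpotent and traceless; for $(2)\Rightarrow(3)$ it projects $A_i$ and uses nondegeneracy — but your block-matrix computation $\tau(u_i\circ A_i)=\tau(e\circ c)$ is the same argument.

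There is, however, one genuine error in your Step (a). The claim that every tuple $(u_i)\in\prod_i U_i$ lifts, so that the image of $N(L)\cap N(\scy)$ in $N(\scy)^{ab}$ is the \emph{full} product $\prod_i U_i$, is false. Since $L$ is maximal totally isotropic, $N(L)$ is the Siegel parabolic nilradical and its elements satisfy an $\epsilon$-(anti)hermitian constraint relating $u_i$ to $u_{2k+1-i}$ — a fact the paper records explicitly (``The same comment applies to $u_i$ as well''), and which you yourself invoke in your last paragraph when you say condition (3) at $i$ and at $2k+1-i$ are equivalent. So the image is a ``diagonal'' inside $\prod_i U_i$, not the whole thing, and your decoupling argument as stated (``freedom to support $u$ on a single factor $U_i$'') does not literally hold. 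What the proof actually needs, and what both you and the paper use implicitly, is weaker: the projection onto $\prod_{i\le k}U_i$ is surjective. Since $\rho$-duality only links $i$ with $2k+1-i$, it never pairs two indices that are both $\le k$, so among $i\le k$ the components are independent, and this suffices because $\psi_\sca$ is defined by a sum over $i\le k$ only. With this correction your argument matches the paper's.

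The surjectivity onto each $U_i$ (and the product structure over $i\le k$) is the step that neither you nor the paper actually proves. You correctly flag it as the main remaining work; the paper glosses over it, declaring $(1)\Leftrightarrow(2)$ ``clear'' and invoking the trace-duality conclusion for $(2)\Rightarrow(3)$ without justifying that every $\phi\in U_i$ arises as some $u_i$. So this is a shared gap rather than one introduced by you, but your write-up would be strengthened by either carrying out the lifting (it does go through, graded piece by graded piece, using the $\rho$-symmetry to fill in $u_{2k+1-i}$) or at least stating precisely the weaker surjectivity that is needed.
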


\begin{proof}
The equivalent between (1) and (2) is clear. We now show that (2) and (3) are equivalent.

Let $u\in N(L)\cap N(\scy)$. Recall that $u_i:Y_{i+1}/Y_i\to Y_i/Y_{i-1}$ is obtained from
\[
Y_{i+1} \xrightarrow{u} Y_i\to Y_i/Y_{i-1}.
\]
Both $Y_i$ and $L\cap Y_{i+1}$ are in the kernel of this map. This implies that the kernel of $u_i$ contains the image of $L\cap Y_{i+1}$ in $Y_{i+1}/Y_i$, which is
\[
((L\cap Y_{i+1})+Y_i)/Y_i\cong L\cap Y_{i+1}/L\cap Y_i.
\]
On the other hand, the image of $u|_{Y_{i+1}}$ is in $L\cap Y_i$.

We now conclude that, for a map $u_i\in \Hom_D(Y_{i+1}/Y_i,Y_i/Y_{i-1})$, its kernel contains $L\cap Y_{i+1}/L\cap Y_i$ and its image of $u_i$ is in $(L\cap Y_i+Y_{i-1})/Y_{i-1}\cong L\cap Y_{i}/L\cap Y_{i-1}$.

We now assume that (3) holds. Then
\[
u_i\circ A_i(L\cap Y_i/L\cap Y_{i-1})\subset u_i(L\cap Y_{i+1}/L\cap Y_{i})=0
\]
\[
u_i\circ A_i(Y_i/Y_{i-1})\subset u_i(Y_{i+1}/Y_{i})\subset L\cap Y_{i+1}/L\cap Y_{i}.
\]
Thus $\tau(u_i\circ A_i)=0$.

We now assume that (2) holds.  Then the image of $A_i$ under the canonical projection
\[
\Hom_D(Y_{i}/Y_{i-1},Y_{i+1}/Y_i)\to \Hom_D(L\cap Y_i/L\cap Y_{i-1},(Y_{i+1}/Y_i)/(L\cap Y_{i+1}/L\cap Y_i))
\]
is zero.
This implies that $A_i(L\cap Y_i/L\cap Y_{i-1})\subset   L\cap Y_{i+1}/L\cap Y_i$.
\end{proof}

Note that $N(L)\cap N(\scy)\bs P(L)\cap N(\scy)$ can be viewed as a subgroup of $\GL_D(L)$ and it is the unipotent radical of the parabolic subgroup of $\GL_D(L)$ stabilizing
\[
0\subset L\cap Y_1 \subset L\cap Y_2 \subset \cdots \subset L\cap Y_1^\perp\subset L.
\]
Assume that $\psi_{\sca}|_{(N(L)\cap N(\scy))(R)}$ is trivial. On the one hand, by Proposition \ref{prop:first vanishing criterion}, for $i\geq 1$, $A_i$ gives an element in $\Hom_D(L\cap Y_{i}/L\cap Y_{i-1},L\cap Y_{i+1}/L\cap Y_{i})$, which is again denoted by $A_i$.
On the other hand, the character $\psi_{\sca}$ can be viewed as a character on $(N(L)\cap N(\scy)\bs P(L)\cap N(\scy))(R)$. This character is given by the restriction of $A_i$ to $\Hom_D(L\cap Y_{i}/L\cap Y_{i-1},L\cap Y_{i+1}/L\cap Y_{i})$ for $i\geq 1$.

We now record a result regarding the structure of $\Omega(W)$ under the action of either parabolic subgroups or their unipotent radicals. The proof of the following lemma is left to the reader.

\begin{Lem}\label{lem:orbit under para and nil}
Let $L, L' \in \Omega(W)$.
\begin{enumerate}[\normalfont(1)]
  \item  There exists $p\in P(\scy)$ such that $L=L'p$ if and only if the intersections $L\cap Y_i$ and $L'\cap Y_i$ has the same dimension for $i=1,\cdots,k$.
  \item There exists $u\in N(\scy)$ such that $L=L'u$ if and only if $L\cap Y_i/L\cap Y_{i-1}=L'\cap Y_i/L'\cap Y_{i-1}$ as subsets of $Y_i/Y_{i-1}$ for $i=1,\cdots,k+1$.
\end{enumerate}
\end{Lem}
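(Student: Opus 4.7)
Both parts of the lemma split into a routine \emph{only if} direction and a substantive \emph{if} direction. The \emph{only if} directions are direct: any $p \in P(\scy)$ stabilizes each $Y_i$, so $L = L'p$ forces $(L' \cap Y_i)p = L \cap Y_i$ and hence matches dimensions, giving (1); any $u \in N(\scy)$ moreover satisfies $(u-1)(Y_i) \subset Y_{i-1}$, so $u$ acts trivially on each $Y_i/Y_{i-1}$, and combining this with the bijection $(L' \cap Y_i)u = L \cap Y_i$ and reducing modulo $Y_{i-1}$ yields (2).

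For the \emph{if} direction of (1), my plan is to invoke Witt's extension theorem for $\epsilon$-hermitian forms. Choose bases of $L'$ and $L$ adapted to the filtrations $\{L' \cap Y_i\}$ and $\{L \cap Y_i\}$; the dimension hypothesis allows the bases to be paired step by step, producing a $D$-linear isometry $f : L' \to L$ of totally isotropic subspaces that carries $L' \cap Y_i$ onto $L \cap Y_i$. I would then extend $f$ to an isometry of $W$ stabilizing $\scy$ one graded piece at a time, picking a complement to the image of the filtration inside each $Y_j/Y_{j-1}$ and extending linearly. At the top step one uses Witt within the remaining factor of the Levi of $P(\scy)$ (either a $\GL$-factor or the smaller classical group on $Y_k^\perp/Y_k$, depending on whether $Y_k$ is maximally isotropic).

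For the \emph{if} direction of (2), my plan is to work directly with bases. Choose bases of $L$ and $L'$ adapted to the induced filtrations and use the subspace equality $L \cap Y_i / L \cap Y_{i-1} = L' \cap Y_i / L' \cap Y_{i-1}$ inside $Y_i/Y_{i-1}$ to arrange that corresponding basis vectors $e_j \in L$ and $e_j' \in L'$ satisfy $e_j - e_j' \in Y_{i-1}$. The required $u - 1$ is then determined on $L'$ by $(u-1)(e_j') = e_j - e_j'$, which automatically shifts the flag down by one step; I would extend it to an endomorphism of $W$ using the polarization $W = X \oplus X'$ and the form to produce a dual for $L'$. The only substantive point, and the place where I expect the main technical obstacle to lie in both parts, is ensuring that the resulting map preserves the $\epsilon$-hermitian form: in (1) this is handled by Witt, while in (2) it translates into the $\rho$-symmetry relations on the off-diagonal entries of $u - 1$ that already appeared in the description of $N(\scy)^{ab}$ in Sect.~\ref{sec:FC of classical}.
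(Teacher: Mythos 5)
The paper gives no proof here — it explicitly says ``The proof of the following lemma is left to the reader'' — so there is no reference argument to compare against, and I can only assess your outline on its own terms. Your \emph{only if} directions are correct and complete. Your \emph{if} directions capture the right spirit but leave real gaps at exactly the steps where the lemma has content.

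For (1): invoking Witt's theorem by itself does not suffice, because the extension it produces need not stabilize the flag $\scy$; that is an extra constraint not covered by the standard Witt statement. Your plan of extending $f$ ``one graded piece at a time'' at best produces an element of the Levi $M(\scy)$, and a Levi element applied to $L'$ will match $L$ only in the graded pieces $Y_i/Y_{i-1}$, not on the nose. So after this step you still need part (2). The cleanest organization is therefore to prove (2) first and deduce (1) from it: the dimension hypothesis plus transitivity of $\GL$ on subspaces of fixed dimension in each $\GL$-factor of $M(\scy)$, together with Witt's theorem on the classical-group factor $G(Y_k^\perp/Y_k)$, produces $m \in M(\scy)$ with $(L'm \cap Y_i)/(L'm \cap Y_{i-1}) = (L \cap Y_i)/(L \cap Y_{i-1})$ for all $i$, and then (2) supplies the $N(\scy)$-element. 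For (2), the prescription $(u-1)(e_j') = e_j - e_j'$ defines $u$ only on $L'$, and since $Y_1$ need not be contained in $L'$, this does not determine $u$ on $Y_1$ (nor on the rest of $W$). The extension must satisfy two constraints simultaneously: the isometry condition, which your $\rho$-symmetry remark addresses, and the flag condition $(u-1)Y_i \subset Y_{i-1}$ for all $i$, which it does not. To make the extension work you must choose a complementary maximal isotropic $\tilde L'$ to $L'$ adapted to $\scy$ (so that $\tilde L' \cap Y_i$ has the complementary dimension in each step), define $u|_{\tilde L'}$ as forced by the pairing against $L'$ and the chosen target complement to $L$, and then check $(u-1)Y_i \subset Y_{i-1}$ directly; alternatively, argue by a descending induction along the lower central series of $N(\scy)$, correcting the discrepancy between $L$ and $L'$ one level of depth at a time. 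Either way, the extension step is the heart of the proof and your outline currently underspecifies it.
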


\subsection{The case of general linear groups}\label{sec:general linear groups}
\subsubsection{Representations of type $(k,n)_D$}

In this section, we consider the group $\GL_{kn,D}$. We usually identify it with $\GL_{W;D}$ where $\dim W=kn$ for convenience. The purpose of this section is to introduce the notion of representations of type $(k,n)_D$, both locally and globally. These representations are supported on a suitable nilpotent orbit and admits unique models of degenerate type. When $D$ is a field, the generalized Speh representations are examples of such representations.

Let $\scy$ be a flag of distinct subspaces of $W$.

\begin{Def}\label{def:orbit kn}
We say a pair $(N(\scy),\psi_{\sca})$ lies in the orbit $(k^n)_D$ if $\scy$ is of the form
\[
0\subset Y_1 \subset \cdots \subset Y_{k-1}\subset W.
\]
and for $i=1,\cdots,k-1$, $\dim Y_i=ni$ and $A_i$ is an isomorphism.
\end{Def}

The stabilizer of a pair that lies the orbit $(k^n)_D$ is isomorphic to $\GL_{n,D}$.

\begin{Def}
We say a pair  $(N(\scy),\psi_{\sca})$ lies in an orbit higher than $(k^n)_D$ if
\[
A_{i+k-1}\circ \cdots \circ A_{i}\neq 0
\]
for some $i$.
\end{Def}

Note that this implies that there are at least $k$ terms in the flag $\scy$.

\begin{Rem}
The nilpotent orbits of $\GL_{n,D}$ can be classified by partitions $n$. Note that $\sca$ for the group $\GL_{W;D}$ can be lifted to a nilpotent element $\tilde\sca\in \End_D(D^{kn})$. The above two definitions are compatible with this classification.
\end{Rem}

\begin{Def}\label{def:local (k,n)}
We say an irreducible admissible representation $\theta$ of a local group $\GL_{W;D}$ is of type $(k,n)_D$ if the following two conditions hold:
  \begin{enumerate}
    \item For a pair $(N(\scy),\psi_{\sca})$ that lies in the orbit $(k^n)_D$,
        \[
        \dim\Hom_{N(\scy)}(\theta,\psi_{\sca})=1.
        \]
    \item For any pair $(N(\scy),\psi_{\sca})$ that lies in an orbit higher than $(k^n)_D$,
        \[
        \dim \Hom_{N(\scy)}(\theta,\psi_{\sca})=0.
        \]
  \end{enumerate}
\end{Def}

\begin{Rem}
In this paper, we need the multiplicity one condition in Definition \ref{def:local (k,n)} (1) when deducing that the global zeta integral is Eulerian.

If (1) in Definition \ref{def:local (k,n)} is replaced by the weaker condition $\Hom_{N(\scy)}(\theta,\psi_{\sca})\neq 0$, we say that the nilpotent orbit attached to $\theta$ is $(k^n)_D$.

Assume that $D$ is a field. The above two notions are related by the following result. Let $\theta$ be an irreducible admissible representation of $\GL_{W;D}$ whose nilpotent orbit is $(k^n)_D$. Assume further that $\theta$ is unitarizable when $D$ is Archimedean. Then $\theta$ is of type $(k,n)_D$, i.e. multiplicity one holds (see \cite{MW87}, \cite{Zelevinsky80} Corollary 8.3 and \cite{GGS17} Corollary G). (However, this result is not true when $D$ is not a field.)
\end{Rem}

\begin{Rem}
By Frobenius reciprocity, $\Hom_{N(\scy)}(\theta,\psi_{\sca})\simeq \Hom_{\GL_{W;D}}(\theta,\Ind_{N(\scy)}^{\GL_{W;D}}(\psi_{\sca}))$. An element in the latter space is called a $(N(\scy),\psi_{\sca})$-model for $\theta$. For a representation $\theta$ of type $(k,n)_D$, we write $\Wh_{N(\scy),\psi_{\sca}}(\theta)$ for the image of a nonzero map in $\Hom_{\GL_{W;D}}(\theta,\Ind_{N(\scy)}^{\GL_{W;D}}(\psi_{\sca}))$.
\end{Rem}

\begin{Def}\label{def:global (k,n)}
  We say an irreducible automorphic representation $\theta$ of $\GL_{W;D}(\ba)$ is of type $(k,n)_D$ if the following conditions hold:
  \begin{enumerate}
    \item The representation supports a nonzero $(N(\scy),\psi_{\sca})$-Fourier coefficient such that the pair lies in the orbit $(k^n)_D$.
    \item For any pair $(N(\scy),\psi_{\sca})$ that lies in an orbit higher than $(k^n)_D$, the $(N(\scy),\psi_A)$-Fourier coefficient vanishes identically.
    \item The local component $\theta_v$ is a representation of type $(k,n)_D$ for every place $v$.
\end{enumerate}
We also say that the nilpotent orbit attached to $\theta$ is $(k^n)_D$ if only parts (1) and (2) hold.
\end{Def}

\begin{Rem}
When $D$ is a field, (3) in Definition \ref{def:global (k,n)} seems to be redundant (but we cannot find a reference for this). If $D$ is not a field, (3) does not follow from the other two conditions.
\end{Rem}

\begin{Rem}
By \cite{Ginzburg06} Proposition 5.3, \cite{JL13} Theorem 1 and \cite{CFK} Theorem 5, the generalized Speh representations are representations of type $(k,n)_D$.
\end{Rem}

\begin{Rem}
There are other definitions of nilpotent orbits attached to a representation in the literature. We refer the reader to \cite{GS19} for a comprehensive account of history. Our definition here only uses a small subclass of all possible coefficients. However, this definition is equivalent to the usual definition (see \cite{Ginzburg06} Definition 2.1 for example). We explain the reason briefly here. Notice that the coefficients given in this paper give at least one coefficient for each nilpotent orbit.

The coefficient we use for the orbit $(k^n)_D$ is a neutral coefficient. By \cite{GGS17} Theorem A, this implies the coefficient is nonvanishing for every Whittaker pair that lies in the orbit $(k^n)_D$. We now show that our vanishing condition implies the vanishing condition for every Whittaker pair in \cite{GGS17} that lies in an orbit higher than $(k^n)_D$. If not, then for some orbit higher than $(k^n)_D$, there is a Whittaker pair whose corresponding degenerate Whittaker coefficient is nonvanishing for $\theta$. We choose a maximal one $\mathcal{O}$ among such orbits. By \cite{GGS} Theorem 8.2.1, the Fourier coefficient is nonvanishing for every Whittaker pair that lies in the orbit $\mathcal{O}$. This contradicts with our assumption.
\end{Rem}

\begin{Lem}\label{lem:eulerian}
Let $\theta=\otimes'_v\theta_v$ be an irreducible automorphic representation of $\GL_{W;D}(\ba)$ which is of type $(k,n)_D$, and consider a $(N(\scy),\psi_{\sca})$-Fourier coefficient for a pair that lies in the orbit $(k^n)_D$. Then for a decomposable $\phi=\otimes_v \phi_v$, there exists $f_v\in \Wh_{N(\scy),\psi_{\sca}}(\theta_v)\subset \Ind_{N(\scy)(F_v)}^{\GL_{W;D}(F_v)}(\psi_{\sca})$ such that
\[
\phi^{N(\scy),\psi_{\sca}}(g)=\prod_v f_v(g_v),
\]
where $g=(g_v)_v\in \GL_{W;D}(\ba)$.
\end{Lem}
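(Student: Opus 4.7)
The plan is to realize the Fourier coefficient as a global element of an induced representation and then invoke local multiplicity one to force it to be decomposable. Concretely, for each $g\in \GL_{W;D}(\ba)$ the assignment $\phi \mapsto \phi^{N(\scy),\psi_\sca}(g)$ is a linear functional on the space of $\theta$, and the map
\[
\Lambda:\theta \to \Ind_{N(\scy)(\ba)}^{\GL_{W;D}(\ba)}(\psi_\sca),\qquad \phi \mapsto \bigl(g\mapsto \phi^{N(\scy),\psi_\sca}(g)\bigr)
\]
is $\GL_{W;D}(\ba)$-equivariant. This is the global $(N(\scy),\psi_\sca)$-Whittaker model, and it is nonzero precisely because $\theta$ is of type $(k,n)_D$ (part (1) of Definition \ref{def:global (k,n)}).

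Next I would use the restricted tensor product decomposition. Since $\theta = \otimes'_v \theta_v$ and the global induced space $\Ind_{N(\scy)(\ba)}^{\GL_{W;D}(\ba)}(\psi_\sca)$ decomposes as a restricted tensor product $\otimes'_v \Ind_{N(\scy)(F_v)}^{\GL_{W;D}(F_v)}(\psi_\sca)$ (with respect to the canonical spherical vectors at almost all finite places), the operator $\Lambda$ can be analyzed place by place. By part (3) of Definition \ref{def:global (k,n)}, each local component $\theta_v$ satisfies
\[
\dim \Hom_{N(\scy)(F_v)}(\theta_v,\psi_\sca)=1,
\]
equivalently $\dim \Hom_{\GL_{W;D}(F_v)}(\theta_v,\Ind_{N(\scy)(F_v)}^{\GL_{W;D}(F_v)}(\psi_\sca))=1$ by Frobenius reciprocity. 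Hence there is a unique (up to scalar) nonzero intertwining operator $\Lambda_v:\theta_v \to \Ind_{N(\scy)(F_v)}^{\GL_{W;D}(F_v)}(\psi_\sca)$, whose image is $\Wh_{N(\scy),\psi_\sca}(\theta_v)$.

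Now I would compare $\Lambda$ with the tensor product $\otimes'_v \Lambda_v$. Both are nonzero elements of
\[
\Hom_{\GL_{W;D}(\ba)}\!\bigl(\theta,\,\Ind_{N(\scy)(\ba)}^{\GL_{W;D}(\ba)}(\psi_\sca)\bigr),
\]
a space that is one-dimensional by the local uniqueness together with the standard fact that at the unramified places the spherical vector in $\theta_v$ is sent to the (suitably normalized) spherical vector in the local induced representation. Therefore $\Lambda$ differs from $\otimes'_v \Lambda_v$ by a global constant, which we can absorb into one of the $\Lambda_v$. Applying this identity to a decomposable vector $\phi = \otimes_v \phi_v$ and evaluating at $g=(g_v)_v$ yields
\[
\phi^{N(\scy),\psi_\sca}(g)=\prod_v f_v(g_v),\qquad f_v:=\Lambda_v(\phi_v)\in \Wh_{N(\scy),\psi_\sca}(\theta_v),
\]
with $f_v$ equal to the spherical element for almost all $v$, so the product converges as usual.

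The main conceptual obstacle is justifying local multiplicity one at every place; this is built into the hypothesis via Definition \ref{def:local (k,n)}(1), which is exactly the point of insisting on condition (3) in Definition \ref{def:global (k,n)} (see the Remark following that definition). Once multiplicity one is available uniformly across all places, the Eulerian factorization is a formal consequence of the tensor product theorem for automorphic representations.
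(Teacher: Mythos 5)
Your argument is essentially the paper's: both reduce to the fact that local multiplicity one (Definition \ref{def:local (k,n)}(1), guaranteed at every place by Definition \ref{def:global (k,n)}(3)) forces the global $(N(\scy),\psi_\sca)$-functional on $\theta = \otimes'_v \theta_v$ to factor as a product of local functionals, after which the Eulerian factorization is immediate. The paper works with the scalar functional $\lambda(\phi) = \phi^{N(\scy),\psi_\sca}(1)$ and recovers $f_v(g_v) = \lambda_v(\theta_v(g_v)\phi_v)$ by equivariance; you work with the intertwiner $\Lambda$ into the induced representation, which is the same picture transported through Frobenius reciprocity.

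One inaccuracy worth flagging, though it does not affect your conclusion: it is not true that $\Ind_{N(\scy)(\ba)}^{\GL_{W;D}(\ba)}(\psi_\sca)$ decomposes as $\otimes'_v \Ind_{N(\scy)(F_v)}^{\GL_{W;D}(F_v)}(\psi_\sca)$. The restricted tensor product only embeds into the global induced space, and the embedding is far from surjective. The correct statement, which you also invoke, is that the image of $\theta$ under $\Lambda$ lands in the restricted tensor product, because $\Hom_{N(\scy)(\ba)}(\theta,\psi_\sca)$ is one-dimensional by local uniqueness and its generator factorizes. The offending sentence is superfluous and should be dropped; the remainder of your argument goes through unchanged.
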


\begin{proof}
The Fourier coefficient
\[
\pi\to \bc,\qquad \phi\mapsto \phi^{N(\scy),\psi_{\sca}}(1)
\]
defines a functional $\lam\in \Hom_{N(\scy)(F\bs \ba)}(\theta,\psi_{\sca})$. From Definition \ref{def:local (k,n)}, one can show that for $\phi=\otimes_v \phi_v$, there exists $\lam_v\in \Hom_{N(\scy)(F_v)}(\theta_v,\psi_{\sca})$ such that
\[
\lam(\phi)=\prod_v \lam_v(\phi_v).
\]
Thus for any $g\in \GL_{W;D}(\ba)$,
\[
\phi^{N(\scy),\psi_{\sca}}(g)=\lam(\theta(g) \phi)=\prod_v \lam_v(\theta_v(g_v) \phi_v).
\]
We take $f_v(g_v)=\lam_v(\theta_v(g_v)\phi_v)$ and it is easy to check that $f_v\in \Ind_{N(\scy)(F_v)}^{\GL_{W;D}(F_v)}(\psi_{\sca})$.
\end{proof}

\subsubsection{Invariance under stabilizer}\label{sec:invariance under stab}

We continue assuming that $\dim W=kn$ and the representation $\theta$ of $\GL_{W;D}$ is of type $(k,n)_D$. We consider the following flag $\scy$:
\[
0\subset Y_1 \subset \cdots \subset Y_{k-1}\subset W
\]
such that $\dim Y_i=ni$, $A_2,\cdots, A_{k-1}$ are isomorphisms, the rank of $A_{1}$ is $a>0$ (which might not be of full rank).

We now show that Fourier coefficients defined by such a pair enjoy an extra invariance property. We start with the case of $a=n$. Recall that the stabilizer $\St_{\sca}$ of a pair $(N(\scy),\psi_{\sca})$ that lies in the orbit $(k^n)_D$ is isomorphic to $\GL_{n,D}$. We start with the local version.

\begin{Lem}\label{lem:invariance I local}
Let $\theta$ be an irreducible admissible representation of $\GL_{W;D}(F)$ that is of type $(k,n)_D$.
\begin{enumerate}
\item The stabilizer $\St_{\sca}$ acts on $\Hom_{N(\scy)}(\theta,\psi_{\sca})$ via a character $\chi_\theta: F^\times\to \bc^\times$.
\item For $f\in \Wh_{N(\scy),\psi_{\sca}}(\theta)$,
\[
f(gh)=\chi_{\theta}(\nu(g))f(h)
\]
for $g\in \St_{\sca}(F)$ and $h\in \GL_{W;D}(F)$.
\end{enumerate}
\end{Lem}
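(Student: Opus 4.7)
The plan is the usual multiplicity-one / abelianization argument. Fix a nonzero $\lambda\in \Hom_{N(\scy)}(\theta,\psi_\sca)$, which exists and spans this one-dimensional space by Definition~\ref{def:local (k,n)}(1). By the very definition of the stabilizer, conjugation by any $g\in \St_\sca$ preserves both $N(\scy)$ and the character $\psi_\sca$, so the functional $v\mapsto \lambda(\theta(g)v)$ again belongs to $\Hom_{N(\scy)}(\theta,\psi_\sca)$. By one-dimensionality there is a scalar $\tilde\chi_\theta(g)\in \bc^\times$ with $\lambda(\theta(g)v)=\tilde\chi_\theta(g)\,\lambda(v)$ for all $v$. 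A direct check shows that $\tilde\chi_\theta$ is a continuous character of $\St_\sca\simeq \GL_{n,D}(F)$, independent of the choice of the spanning vector $\lambda$.

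Next I would argue that $\tilde\chi_\theta$ factors through the reduced norm $\nu$. This reduces to showing that $\SL_{n,D}(F)=\ker(\nu)$ is contained in the commutator subgroup of $\GL_{n,D}(F)$, i.e.\ that $\SL_{n,D}(F)$ is perfect. For $D=F$ and $n\geq 2$ this is the classical statement that $\SL_n(F)$ equals its commutator subgroup; for the remaining types $(D,\rho)$ appearing in Section~\ref{sec:groups} one invokes Wang's theorem (for $n=1$, describing $D^\times/[D^\times,D^\times]$ via the reduced norm) and its higher-rank analogues. This produces the character $\chi_\theta\colon F^\times\to \bc^\times$ of part (1).

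Part (2) is then formal. Via Frobenius reciprocity, any $f\in \Wh_{N(\scy),\psi_\sca}(\theta)$ can be written as $f(h)=\lambda(\theta(h)v_0)$ for a suitable $v_0\in \theta$, and then
\[
f(gh)=\lambda(\theta(g)\theta(h)v_0)=\tilde\chi_\theta(g)\,\lambda(\theta(h)v_0)=\chi_\theta(\nu(g))\,f(h),
\]
which is the asserted equivariance.

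The main technical ingredient is the perfectness of $\SL_{n,D}(F)$ uniformly across the types $(D,\rho)$ in the paper; once this is in hand the proof is a direct application of multiplicity one and Frobenius reciprocity, with no further combinatorial or representation-theoretic input required.
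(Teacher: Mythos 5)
Your proof matches the paper's: both rest on the one-dimensionality of $\Hom_{N(\scy)}(\theta,\psi_\sca)$ to produce a character of $\St_\sca\simeq\GL_{n,D}(F)$, then argue that any such character factors through the reduced norm $\nu$, with part (2) following formally; the paper simply asserts triviality on $\SL_{n,D}(F)$ while you spell out the reason via Wang's theorem, which is a useful elaboration. One small imprecision: what is needed is the inclusion $\SL_{n,D}(F)\subset[\GL_{n,D}(F),\GL_{n,D}(F)]$, which is exactly what Wang's theorem gives, but this is \emph{not} equivalent to $\SL_{n,D}(F)$ being perfect --- indeed for $n=1$ and $D$ a non-split $p$-adic division algebra, $\SL_1(D)$ is a compact group with nontrivial abelianization, so "perfectness" fails there even though the required containment holds.
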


\begin{proof}
Indeed, $\Hom_{N(\scy)}(\theta,\psi_{\sca})$ is a one-dimensional representation of $\St_{\sca}(F)\simeq \GL_{n,D}(F)$. It must be trivial on $\SL_{n,D}(F)$ and therefore factors through $\nu$. Thus, this representation is a character $\chi_\theta$. In other words, $\lam(\theta(g)\phi)=\chi_\theta(\nu(g))\lam(\phi)$ for any $\phi\in \theta$. This proves the first part. The second part follows immediately.
\end{proof}

Here is the global version.

\begin{Lem}\label{lem:invariance I global}
Let $\theta$ be an irreducible unitary automorphic representation of $\GL_{W;D}(\ba)$ that is of type $(k,n)_D$. Then there is a character $\chi_\theta: F^\times \bs \ba^\times \to \bc^\times$ such that, for any $\phi\in \theta$,
\[
\phi^{N(\scy),\psi_{\sca}}(gh)=\chi_{\theta}(g)\phi^{N(\scy),\psi_{\sca}}(h)
\]
for any $g\in \St_{\sca}(F\bs \ba)$ and $h\in \GL_{W;D}(\ba)$.
\end{Lem}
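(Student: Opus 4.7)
The plan is to reduce to the local statement (Lemma \ref{lem:invariance I local}) via the Eulerian factorization of the Fourier coefficient (Lemma \ref{lem:eulerian}), and then check that the resulting product of local characters descends to a well-defined character of $F^\times \bs \ba^\times$ by invoking automorphy of $\phi$.

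First, I would reduce to the case of a decomposable vector $\phi = \otimes_v \phi_v$. By Lemma \ref{lem:eulerian}, one has
\[
\phi^{N(\scy),\psi_{\sca}}(g) = \prod_v f_v(g_v), \qquad f_v \in \Wh_{N(\scy),\psi_{\sca}}(\theta_v).
\]
Applying Lemma \ref{lem:invariance I local} at each place yields a local character $\chi_{\theta_v}$ such that $f_v(g_v h_v) = \chi_{\theta_v}(\nu(g_v)) f_v(h_v)$ whenever $g_v \in \St_{\sca}(F_v) \simeq \GL_{n,D}(F_v)$. Taking the product over all places (formally, for each fixed $g$ only finitely many local factors differ from $1$, as I explain below), one obtains a candidate $\chi_{\theta}(g) := \prod_v \chi_{\theta_v}(\nu(g_v))$ with $\phi^{N(\scy),\psi_{\sca}}(gh) = \chi_{\theta}(g)\,\phi^{N(\scy),\psi_{\sca}}(h)$ for $g \in \St_{\sca}(\ba)$.

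Next I would justify that this product is a well-defined continuous character on $\St_{\sca}(\ba)$. At almost all places $v$ the representation $\theta_v$ is unramified, so the line $\Hom_{N(\scy)(F_v)}(\theta_v,\psi_{\sca})$ contains a spherical vector; the action of the compact subgroup $\St_{\sca}(\mathcal{O}_v)$ on this line must then be trivial, so $\chi_{\theta_v}$ is unramified. For $g \in \St_{\sca}(\ba)$, this forces all but finitely many factors in $\prod_v \chi_{\theta_v}(\nu(g_v))$ to be $1$, and the resulting map is a continuous character of $\St_{\sca}(\ba)$. Via the reduced norm $\nu: \St_{\sca}(\ba) \to \ba^\times$ (or $\ba_E^\times$ in the hermitian cases), this factors through a character, still called $\chi_\theta$, of $\ba^\times$.

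Finally, to descend to $F^\times \bs \ba^\times$, I would use that the Fourier coefficient $\phi^{N(\scy),\psi_{\sca}}$ is, by construction, left-invariant under $\St_{\sca}(F)$: for $\gamma \in \St_{\sca}(F)$, the change of variables $u \mapsto \gamma u \gamma^{-1}$ in the defining integral, together with the fact that $\gamma$ preserves $\psi_{\sca}$ and that $\phi$ is $\GL_{W;D}(F)$-invariant, yields $\phi^{N(\scy),\psi_{\sca}}(\gamma h) = \phi^{N(\scy),\psi_{\sca}}(h)$. Comparing this with the equivariance just proved, we conclude that $\chi_\theta(\nu(\gamma)) = 1$ for every $\gamma \in \St_{\sca}(F)$, so $\chi_\theta$ is trivial on $\nu(\St_{\sca}(F))$, and hence gives a well-defined character on $F^\times \bs \ba^\times$ as claimed. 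The main delicate point is the last descent step, since it requires that a nonzero Fourier coefficient actually exist so that the character is uniquely determined rather than only well-defined up to sign/scaling; this is guaranteed by the fact that $\theta$ is of type $(k,n)_D$, so such a nonvanishing $\phi$ exists.
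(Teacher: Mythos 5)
Your proof follows essentially the same route as the paper's: reduce to decomposable $\phi$, factor the Fourier coefficient via Lemma~\ref{lem:eulerian}, invoke Lemma~\ref{lem:invariance I local} at each place to obtain local characters $\chi_{\theta_v}$, and then check triviality on $F^\times$. The paper's proof is terser (it simply asserts the existence of the global character and that "it is easy to check that $\chi_\theta$ is trivial on $F^\times$"), whereas you spell out the unramifiedness at almost all places to justify convergence of the product and the change-of-variables argument in the defining integral for the descent; these are exactly the details the paper leaves implicit, and your reasoning for them is correct.
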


\begin{proof}
Fix $h\in \GL_{W;D}(\ba)$. By Lemma \ref{lem:eulerian} and Lemma \ref{lem:invariance I local}, there exists $\chi_{\theta_v}:F_v^\times \to \bc^\times$ such that
\[
\phi^{N(\scy),\psi_{\sca}}(gh)=\prod f_v(g_v h_v)=\prod_v \chi_{\theta_v}(\nu(g_v))f_v(h_v) \text{ for any }g_v\in \St_{\sca}(F_v).
\]
Thus for any $g\in \St_{\sca}(\ba)$, $\phi^{N(\scy),\psi_{\sca}}(gh) =\chi_\theta(\nu(g))\phi^{N(\scy),\psi_{\sca}}(h)$ for a character $\chi_{\theta}$ of $\ba^\times$. It is easy to check that $\chi_\theta$ is trivial on $F^\times$.
\end{proof}

%Define $S_{\sca}$ be the subgroup of $P(\mathrm{Ker}(A_1))\times P(\mathrm{Im}(A_1))\subset \GL(Y_1)\times \GL(Y_2/Y_1)$ such that
%\[
%\alpha_{\mathrm{Ker}(A)}(g^{-1}) = \bar{A}\cdot h^{-1}|_{\mathrm{Im}(A)} \cdot \bar{A}^{-1}.
%\]
%Define $\jmath:S_{\sca}\to \prod_{i=1}^{k} \GL(Y_i/Y_{i-1})$
%\[
%(g,h)\mapsto (g,h, A_2 h A_2^{-1},\cdots, A_{k-1} h A_{k-1}^{-1}).
%\]
We now consider the case $a<n$. The results here are weaker. In particular, locally we only consider non-Archimedean cases.

Define $S_{\sca}$ to be the subgroup of $M(\scy)$:
\[
N(\mathrm{Ker}(A_1))\times \{1\} \times\cdots \times \{1\} \subset \GL(Y_1)\times \GL(Y_2/Y_1) \times \cdots \times \GL(W/Y_{k-1}).
\]

\begin{Lem}
The group $S_{\sca}$ is in the stabilizer of the pair $(N(\scy),\psi_{\sca})$.
\end{Lem}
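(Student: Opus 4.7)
The plan is to verify directly that conjugation by $S_{\sca}$ preserves both the group $N(\scy)$ and the character $\psi_{\sca}$. Since $S_{\sca}$ is contained in the Levi subgroup $M(\scy)$, the first point is automatic: any Levi element normalizes $N(\scy)$. Thus the content lies entirely in checking the invariance of $\psi_{\sca}$.

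Next, I would compute how conjugation by a typical element $s=(s_1,1,\dots,1)\in S_{\sca}$ with $s_1\in N(\mathrm{Ker}(A_1))\subset\GL(Y_1)$ acts on the abelianization $N(\scy)^{ab}\simeq \prod_{i=1}^{k-1}\Hom_D(Y_{i+1}/Y_i,Y_i/Y_{i-1})$. Writing $u\mapsto (u_1,\dots,u_{k-1})$, the standard action of a Levi element $(g_1,\dots,g_k)$ on the $i$-th component is $u_i\mapsto g_i\circ u_i\circ g_{i+1}^{-1}$. For our specific $s$ this gives $(sus^{-1})_1=s_1\circ u_1$ and $(sus^{-1})_i=u_i$ for every $i\geq 2$. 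Consequently, only the first summand $\tau(u_1\circ A_1)$ in the exponent defining $\psi_{\sca}$ is at risk of changing.

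The key identity is $A_1\circ s_1=A_1$. This follows from the definition of $N(\mathrm{Ker}(A_1))$: elements of this unipotent radical act trivially on $\mathrm{Ker}(A_1)$ and trivially on $Y_1/\mathrm{Ker}(A_1)$, so $(s_1-1)(Y_1)\subseteq \mathrm{Ker}(A_1)$, whence $A_1\circ(s_1-1)=0$. Combining this with the cyclic invariance $\tau(XY)=\tau(YX)$ of the reduced trace, one gets
\[
\tau\bigl((s_1\circ u_1)\circ A_1\bigr)=\tau\bigl(u_1\circ A_1\circ s_1\bigr)=\tau(u_1\circ A_1),
\]
so the first term of the character is preserved. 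All other terms are unaffected, and hence $\psi_{\sca}(sus^{-1})=\psi_{\sca}(u)$ for every $u\in N(\scy)$.

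I do not anticipate a real obstacle here; the only step that requires care is the bookkeeping for the conjugation action on the individual blocks $u_i$ (so that one really does see the action is $u_1\mapsto s_1\circ u_1$, coming from the fact that $s_1$ acts on the target $Y_1/Y_0=Y_1$ of $u_1$), together with correctly interpreting $N(\mathrm{Ker}(A_1))$ as fixing both $\mathrm{Ker}(A_1)$ pointwise and the quotient $Y_1/\mathrm{Ker}(A_1)$ pointwise, which is exactly what makes $A_1\circ s_1=A_1$.
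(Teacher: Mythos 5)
Your argument is correct, and it supplies exactly the computation that the paper declines to spell out (the paper's proof is just ``This is straightforward''). The structure is right: since $S_{\sca}\subset M(\scy)$, normalization of $N(\scy)$ is automatic, and the whole content is checking invariance of the character; conjugation by $s=(s_1,1,\dots,1)$ touches only the $i=1$ block of $N(\scy)^{ab}$; the crux is $\mathrm{Im}(s_1-1)\subseteq\mathrm{Ker}(A_1)$, which gives $A_1\circ s_1=A_1$, combined with cyclicity of the reduced trace.

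One small bookkeeping issue: with the paper's convention that $\GL_D(W)$ acts on $W$ on the right, a direct matrix computation gives $(sus^{-1})_1=s_1^{-1}\circ u_1$ rather than $s_1\circ u_1$ (i.e.\ your general formula $u_i\mapsto g_i\circ u_i\circ g_{i+1}^{-1}$ has the inverses on the wrong factors). This does not affect the conclusion, since $A_1\circ s_1=A_1$ is equivalent to $A_1\circ s_1^{-1}=A_1$, and the trace identity
\[
\tau\bigl(s_1^{\mp1}\circ u_1\circ A_1\bigr)=\tau\bigl(u_1\circ A_1\circ s_1^{\mp1}\bigr)=\tau(u_1\circ A_1)
\]
goes through unchanged. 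Still, it is worth being careful here, because in this paper the direction of the action genuinely matters in other places.
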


\begin{proof}
This is straightforward.
\end{proof}

%The coefficient here is not neutral and the result below is not covered in \cite{GGS}.

%We now consider the subgroup $\tilde S_{\sca}$ of $S_{\sca}$ such that $g|_{\mathrm{Ker}(A_1)}=1$ and the induced on quotient is also the identity.

We now show that $\phi^{N(\scy),\psi_{\sca}}(g)$ is left-invariant under $[S_{\sca}]$. We will explain the necessary modification in the non-Archimedean case. %However, the Archimedean case is out of reach at the moment.
Note that $S_{\sca}$ is not the full stabilizer.

\begin{Prop}\label{prop:invariance}
For $\phi\in \theta$,
\[
\phi^{N(\scy),\psi_{\sca}}(gh)=\phi^{N(\scy),\psi_{\sca}}(h)
\]
for any $g\in S_{\sca}(F\bs \ba)$ and $h\in \GL_{W;D}(\ba)$.
\end{Prop}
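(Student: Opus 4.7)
The plan is to Fourier-expand $\phi^{N(\scy),\psi_{\sca}}(sh)$ as a function of $s\in[S_{\sca}]$ and eliminate every nontrivial Fourier mode using the vanishing hypothesis in Definition \ref{def:global (k,n)}\,(2). First I note that $S_{\sca}\cong N(\mathrm{Ker}(A_1))$ is abelian, being the unipotent radical of a maximal parabolic of $\GL(Y_1)\cong \GL_{n,D}$, and that it normalizes $N(\scy)$ and stabilizes $\psi_{\sca}$. Therefore $N(\scy)S_{\sca}$ is a unipotent subgroup, and one checks it coincides with $N(\scy')$ for the refined flag
\[
\scy':\quad 0\subset \mathrm{Ker}(A_1)\subset Y_1\subset Y_2\subset\cdots\subset Y_{k-1}\subset W.
\]

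The characters of $[S_{\sca}]$ are parametrized, via the pairing $s\mapsto\psi(\tau((s-1)\circ B))$, by elements $B\in\Hom_D(\mathrm{Ker}(A_1), Y_1/\mathrm{Ker}(A_1))$. Unfolding the Fourier expansion and using the fact that $S_{\sca}$ preserves $\psi_{\sca}$, the $B$-th Fourier mode of $s\mapsto\phi^{N(\scy),\psi_{\sca}}(sh)$ equals the $(N(\scy'),\psi_{\sca'_B})$-Fourier coefficient of $\phi$ at $h$, where
\[
\sca'_B = (B,\,\overline{A_1},\,A_2,\ldots,A_{k-1})
\]
and $\overline{A_1}\in\Hom_D(Y_1/\mathrm{Ker}(A_1), Y_2/Y_1)$ is the injection induced by $A_1$. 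This identification is the technical core of the argument: it requires tracking the abelianization of $N(\scy')$ under the decomposition $N(\scy')=N(\scy)\cdot S_{\sca}$ and verifying that the compound character $\psi_{\sca}\cdot \chi_B^{-1}$ matches $\psi_{\sca'_B}$ component by component.

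For nontrivial $B$, the composition of $k$ consecutive maps along $\scy'$, namely
\[
A_{k-1}\circ A_{k-2}\circ\cdots\circ A_2\circ\overline{A_1}\circ B:\;\mathrm{Ker}(A_1)\lra W/Y_{k-1},
\]
is nonzero, since $\overline{A_1}$ is injective (by construction) and each $A_i$ with $i\geq 2$ is an isomorphism. Hence $(N(\scy'),\psi_{\sca'_B})$ lies in an orbit higher than $(k^n)_D$, and the corresponding Fourier coefficient vanishes by Definition \ref{def:global (k,n)}\,(2). Consequently only the trivial mode $B=0$ survives, so $\phi^{N(\scy),\psi_{\sca}}(sh)$ is independent of $s\in[S_{\sca}]$; evaluating at $s=1$ yields the desired invariance. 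The main obstacle is the bookkeeping of the second paragraph---once the refined Fourier coefficient has been correctly identified, the higher-orbit conclusion and the final vanishing are essentially formal.
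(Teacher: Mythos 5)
Your proof is essentially the same as the paper's: both Fourier-expand $\phi^{N(\scy),\psi_\sca}$ along the abelian group $[S_\sca]\cong[N(\mathrm{Ker}(A_1))]$, identify each nontrivial Fourier mode $B$ with the $(N(\scy^\dagger),\psi_{\sca^\dagger})$-coefficient for the refined flag $0\subset\mathrm{Ker}(A_1)\subset Y_1\subset\cdots\subset Y_{k-1}\subset W$ with $\sca^\dagger=(B,\bar A_1,A_2,\ldots,A_{k-1})$, observe that the composition $A_{k-1}\circ\cdots\circ A_2\circ\bar A_1\circ B$ is nonzero for $B\neq 0$ (hence the pair lies in an orbit higher than $(k^n)_D$), and invoke the vanishing in Definition \ref{def:global (k,n)}(2). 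Your exposition is slightly more explicit than the paper's on the identification $N(\scy)\cdot S_\sca=N(\scy^\dagger)$ and on why the $k$-fold composition is nonzero, but the approach is identical.
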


\begin{proof}
Note that $N(\mathrm{Ker}(A_1))(F)$ is isomorphic to $[N(\mathrm{Ker}(A_1))]^\vee$ once we choose an additive character $\psi$. We denote the bijection by $\gamma \mapsto \psi_\gamma$.
We can perform Fourier expansion of $\phi^{N(\scy),\psi_{\sca}}(g)$ along the abelian group $N(\mathrm{Ker}(A_1))$ to obtain
\[
\phi^{N(\scy),\psi_{\sca}}(g)=\sum_{\gamma \in N(\mathrm{Ker}(A_1))(F)} \
\int\limits_{[N(\mathrm{Ker}(A_1))]} \phi^{N(\scy),\psi_{\sca}}(ug) \psi_\gamma( u) \ du.
\]
Each term is an $(N(\scy^\dagger),\psi_{\sca^\dagger})$-Fourier coefficient, where the flag $\scy^\dagger$ is
\[
0\subset \mathrm{Ker}(A_1) \subset Y_1 \subset Y_2 \subset \cdots \subset Y_{k-1},
\]
and $\sca^\dagger$ is
\[
(\gamma, \bar A_1,\cdots, A_{k-1})\in \Hom_D(\mathrm{Ker}(A_1),Y_1/\mathrm{Ker}(A_1)) \times \Hom_D(Y_1/\mathrm{Ker}(A_1),Y_2/Y_1) \times \cdots
\]
Here $\bar A_1$ is the map $\bar A_1:Y_1/\mathrm{Ker}(A_1)\cong \mathrm{Im}(A_1)$.
The pair $(N(\scy^\dagger),\psi_{\sca^\dagger})$ lies in an orbit higher than $(k^n)_D$ as long as  $\gamma\in \Hom_D(\mathrm{Ker}(A_1),Y_1/\mathrm{Ker}(A_1))$ is nonzero. As $\theta$ is a representation of type $(k,n)_D$, this shows that
\[
\phi^{N(\scy),\psi_{\sca}}(g)=
\int\limits_{[N(\mathrm{Ker}(A_1))]} \phi^{N(\scy),\psi_{\sca}}(ug)  \ du
\]
and the result follows.
\end{proof}

We now state the non-Archimedean version.

\begin{Lem}\label{lem:invariance II local}
As a representation of $S_{\sca}$, the twisted Jacquet module $J_{N(\scy),\psi_{\sca}}(\theta)$ is trivial.
\end{Lem}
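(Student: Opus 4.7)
The plan is to imitate the global argument of Proposition \ref{prop:invariance}, replacing Fourier expansion over the compact quotient $[N(\mathrm{Ker}(A_1))]$ by a local Fourier argument on the abelian $\ell$-group $U := S_{\sca}(F) = N(\mathrm{Ker}(A_1))(F)$. The setup is immediate: since $U$ lies in the Levi $M(\scy)$, it normalizes $N(\scy)$, and by the preceding lemma it preserves $\psi_{\sca}$. Consequently $U$ acts on $V := J_{N(\scy),\psi_{\sca}}(\theta)$, and the claim is that this action is trivial.

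The central step will be to identify iterated twisted Jacquet modules. Parametrize the smooth characters of $U$ by elements $\gamma \in \Hom_D(\mathrm{Ker}(A_1),Y_1/\mathrm{Ker}(A_1))$, writing $\psi_\gamma$ for the associated character. Since $U \cap N(\scy) = \{1\}$ and the semidirect product $U \ltimes N(\scy)$ equals $N(\scy^\dagger)$ --- the unipotent radical of the parabolic attached to the refined flag $\scy^\dagger:0\subset \mathrm{Ker}(A_1)\subset Y_1\subset\cdots\subset Y_{k-1}$ of the proof of Proposition \ref{prop:invariance} --- the transitivity of Jacquet functors gives
\[
J_{U,\psi_\gamma}(V) \;=\; J_{N(\scy^\dagger),\psi_{\sca^\dagger}}(\theta),
\]
with $\sca^\dagger = (\gamma,\bar A_1, A_2,\ldots, A_{k-1})$. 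Whenever $\gamma\neq 0$, the $k$-fold composition $A_{k-1}\circ\cdots\circ A_2\circ\bar A_1\circ \gamma$ is nonzero, since $\bar A_1$ is injective and each $A_i$ with $i\geq 2$ is an isomorphism. Hence $(N(\scy^\dagger),\psi_{\sca^\dagger})$ lies in an orbit higher than $(k^n)_D$, and Definition \ref{def:local (k,n)}(2) forces $J_{U,\psi_\gamma}(V)=0$.

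The main obstacle is the final step: deducing from $J_{U,\psi_\gamma}(V)=0$ for every nontrivial smooth character $\psi_\gamma$ of $U$ that $U$ itself acts trivially on $V$. This is the non-Archimedean avatar of the absolutely convergent Fourier expansion used globally. I would argue it by reduction to finite quotients: fix $v\in V$ and a compact open subgroup $U_0\subset U$ fixing $v$; the cyclic $U$-submodule generated by $v$ is then a module over the group algebra of the torsion abelian group $U/U_0$, and on every finite-dimensional slice $V^{U_0'}$ with $U_0'\subset U_0$ a sufficiently small compact open, the finite abelian group $U_0/U_0'$ acts completely reducibly. The vanishing established above rules out every nontrivial character contribution to each such finite quotient, leaving only the trivial isotypic component; passing to the direct limit over $U_0'$ and using that every $u\in U$ lies in some such subgroup then yields $uv=v$ for all $u\in U$, so $U$ acts trivially on $V$ as claimed.
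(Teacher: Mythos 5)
Your overall strategy agrees with the paper's: adapt the global argument of Proposition~\ref{prop:invariance}, and the paper's one-line proof says exactly this, citing \cite{BZ76} Lemma~5.10 (and \cite{GGS} Lemma~4.1.1) as the tool that plays the role of the absolutely convergent Fourier expansion. Your second paragraph is correct and matches the intent: $S_{\sca}\ltimes N(\scy)=N(\scy^\dagger)$, transitivity of twisted Jacquet functors gives $J_{S_{\sca},\psi_\gamma}(J_{N(\scy),\psi_{\sca}}(\theta))\cong J_{N(\scy^\dagger),\psi_{\sca^\dagger}}(\theta)$, and for $\gamma\neq 0$ the composite $A_{k-1}\circ\cdots\circ A_2\circ\bar A_1\circ\gamma$ is nonzero so the pair lies in an orbit higher than $(k^n)_D$ and the Jacquet module vanishes by Definition~\ref{def:local (k,n)}(2).

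The problem is the final paragraph. First, the set-up is garbled: you write $U_0'\subset U_0$, but then the ``finite quotient'' you need is obtained from a compact open subgroup \emph{larger} than $U_0$, since $U_0$ already fixes the cyclic submodule generated by $v$ (indeed, as you observe, $U=S_{\sca}(F)$ is abelian so $U_0$ fixes all of $U v$), and nothing is detected by looking at $U_0/U_0'$. The spaces $V^{U_0'}$ are also not finite-dimensional in general and need not be for the argument. More seriously, even after fixing the inclusions, the central assertion --- that $J_{U,\psi_\gamma}(V)=0$ for all nontrivial $\psi_\gamma$ ``rules out every nontrivial character contribution'' to the finite quotients $U_1/U_0$ --- is exactly the nontrivial content, and you assert it without justification. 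Concretely, if $w\neq 0$ lies in the $\chi$-isotypic component of $\mathrm{span}(U_1 v)$ for a nontrivial character $\chi$ of $U_1$ trivial on $U_0$, you must produce a character $\tilde\chi$ of all of $U$ extending $\chi$ with $J_{U,\tilde\chi}(V)\neq 0$. A single extension may well have $w\in V(U,\tilde\chi)$: the $\tilde\chi|_{U_2}$-average of $w$ can vanish for some larger compact open $U_2$, which by the Bernstein--Zelevinsky characterization of $V(U,\tilde\chi)$ means $w$ dies in the Jacquet quotient. Making this work requires an argument over the space of all extensions of $\chi$ (a compact coset of $\mathrm{Ann}(U_1)$ in the Pontryagin dual) showing some extension survives every $U_2$, or equivalently the sheaf-on-the-dual formalism. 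This is precisely what the cited \cite{BZ76} Lemma~5.10 packages, and it cannot be replaced by the bare finite-group complete reducibility you invoke.
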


\begin{proof}
The argument above works, with the help of \cite{BZ76} Lemma 5.10 (see also \cite{GGS} Lemma 4.1.1).
\end{proof}

\begin{Rem}
The Lemma of Bernstein-Zelevinsky fails in the Archimedean case. The best we can get is \cite{GGS} Proposition 3.0.1. However, Lemma \ref{lem:invariance II local} is expected to be true when $\theta$ is unitary.
\end{Rem}

\begin{Rem}
The Fourier coefficient $\phi^{N(\scy),\psi_{\sca}}(g)$ is invariant under a larger subgroup of the stabilizer. What we prove here is sufficient for applications in this paper.
%In the general case, the coefficient is not in the type in this paper. At least it is not straightforward. Anyway, it would be enough to state the version for $N(Ker(A))$. For neutral model, see [GGS].
\end{Rem}
%Subgroup is $P(\mathrm{Im}(A))$ such that the induced map on $V/\mathrm{Im(A)}$ is trivial. For $\mathrm{Ker}(A)$, it should be that on $\mathrm{Ker}(A)$ is trivial.

\section{Doubling variables}\label{sec:doubling variables}
Let $\scw=(W,\la \ , \ \ra)$ be one of the $\epsilon$-skew hermitian forms described in Sect. \ref{sec:groups}. Let $k$ be a fixed positive integer. Put $W^{\square,k}=W^{\oplus 2k}$. We usually write
\[
W^{\square,k} = W_{1,+}\oplus W_{2,+}\oplus \cdots \oplus  W_{k,+}\oplus W_{k,-}\oplus \cdots \oplus W_{2,-} \oplus W_{1,-}
\]
to distinguish the copies of $W$ in $W^{\square,k}$. We write an element in $W^{\square,k}$ as
\[
(\bx;\by)=(x_1,\cdots, x_k; y_{k},\cdots, y_{1}),\qquad x_i\in W_{i,+}, \ y_i\in W_{i,-}.
\]
Define an $\epsilon$-skew hermitian form $\la \ , \ \ra^{\square,k} $ on $W^{\square,k}$ by
\[
\la (\bx;\by), (\bx';\by') \ra^{\square,k} = \sum_{i=1}^k(\la x_i,x'_i\ra-\la y_{i},y'_{i}\ra) \qquad (x_i,x'_i\in W_{i,+}; y_{i},y'_{i}\in W_{i,-}).
\]
Let $G^{\square,k}$ denote the unitary group of $(W^{\square,k}, \la \ , \ \ra^{\square,k})$.

For $W^\square =W_+\oplus W_-$, let
\[
W^\nabla=\{(x,-x)\in W_{+}\oplus W_{-}: x\in W\}
\]
be the graph of minus the identity map from $W$ to $W$, and
\[
W^{\Delta}=\{(x,x)\in W_{+}\oplus W_{-}: x\in W\}.
\]
be the graph of the identity map. Given $x\in W$, we write
\[
x^\Delta=(x ,x)\in W^\Delta \text{ and } x^\nabla=(x,-x)\in W^\nabla.
\]
We have the following observations:
\begin{enumerate}
\item For each $i$, $W_i^\square=W_{i,+}\oplus W_{i,-}=W^\Delta_{i}+ W_i^\nabla.$
Both $W_i^\Delta$ and $W_i^\nabla$ is totally isotropic in $W^{\square,k}$.
\item The space $W^\Delta$ is  isomorphic to $W$ as vector spaces via
\[
W^\Delta \simeq W, \qquad (x,x)\mapsto x.
\]
%We also define
%\[
%W^\square/W^\Delta\to W,\qquad (x,y)\mapsto x-y.
%\]
The space $W^\nabla$ is identified with $W$ via $(x,-x)\mapsto 2x$. Thus, we can view $G(\scw)$ as a subgroup of $\GL_D(W^\Delta)$ or $\GL_D(W^\nabla)$, and identify $\Hom_D(W_i^\nabla,W_j^\nabla)$ with $\End_D(W)$.
%\item More concretely, an element $L\in \Hom(W_i^\nabla,W_j^\nabla)$ is of the form
%\[
%(x,-x)\mapsto (L'x,-L'x)
%\]
%for some $L'\in \End(W)$. Thus $\tau(L)=\tau(L')$. An element $L\in \Hom(W,W^\nabla)$ is of the form
%\[
%x\mapsto (L'x,-L'x)
%\]
%for some $L'\in \End(W)$. When identifying $\Hom(W,W^\nabla)$ with $\End(W)$, the trace is calculated as $2\tau(L')$.

\end{enumerate}

Define
\[
W^{\Delta,k}=W_1^\Delta \oplus \cdots \oplus W_k^\Delta,\qquad W^{\nabla,k}=W_1^\nabla \oplus \cdots \oplus W_k^\nabla.
\]
Both spaces are totally isotropic in $W^{\square,k}$ and $W^{\square,k} =W^{\Delta,k}+W^{\nabla,k}$. This is a complete polarization of $W^{\square,k}$.

\section{Eisenstein series}\label{sec:eisenstein series}
Let $D$ be a division algebra over a number field $E$ of the first three types referred to in Sect. \ref{sec:groups}.
We now describe the Eisenstein series that appear in the global construction.

Let $\theta$ be an irreducible unitary automorphic representation of $\GL_{kn,D}(\ba)$ that is of type $(k,n)_D$. Unless otherwise specified, we let $P=P(W^{\Delta,k})$. We define the normalized global induced representation $I(s,\theta)=\Ind_{ P(\ba)}^{ G^{\square,k}(\ba)}(\theta\cdot \nu^s)$.

For any holomorphic section $\tilde\phi^{(s)}$ of $I(s,\theta)$, we write $\phi^{(s)}(g)=\tilde\phi^{(s)}(g;1)$ to be the value at the identity. We form the associated Eisenstein series $E(\phi^{(s)})$ on $G^{\square,k}(F)\bs G^{\square,k}(\ba)$ by
\[
E(\phi^{(s)})(g)=\sum_{\gamma\in P(F)\bs G^{\square,k}(F)} \phi^{(s)}(\gamma g).
\]
The Eisenstein series converges for $\Re s\gg 0$. By the theory of Eisenstein series, it can be continued to a meromorphic function in $s$ on all of $\bc$ satisfying a functional equation.
%\[
%E(f^{(s)})=E(A(s,\tau)f^{(s)}).
%\]

\subsection{Fourier coefficients of certain representations}\label{sec:FC of Eisenstein series}

In this section we consider a certain Fourier coefficient of the Eisenstein series $E(\phi^{(s)})$. The purpose here is two-fold. First, we give this example as a test case for the tools we develop in the Sect. \ref{sec:contribution in unfolding}. The unfolding argument is more involved. Second, this is also a necessary ingredient in the normalization of the intertwining operators. This would appear in the local theory of the twisted doubling integrals.

Recall that the nilpotent orbit attached to $\theta$ on $\GL_{kn,D}(\ba)$ is $(k^n)_D$. The induced orbit of $(k^n)_D$ to $G^{\square,k}$ is $((2k)^n)_D$ when identified with partitions, except in the case of odd orthogonal group, where we need to take its `collapse'. \textbf{In this section we exclude the case of odd orthogonal groups, just as in \cite{LR05}}. The necessary modification will be explained in a forthcoming article.

We now describe the coefficient more concretely. Let $P_{\scw,k}^{\heartsuit}=M_{\scw,k}^{\heartsuit}\cdot N_{\scw,k}^{\heartsuit}$ be the parabolic subgroup of $G^{\square,k}$ stabilizing
\[
0\subset W_k^\nabla \subset W_{k-1}^\nabla \oplus W_k^\nabla \subset \cdots \subset W^{\nabla,k}.
\]
We usually write $P_{\scw}^{\heartsuit}=M_{\scw}^{\heartsuit}\cdot N_{\scw}^{\heartsuit}$ as the dependence on $k$ does not appear in this section. We reindex the flag as
\[
0\subset Y_1\subset  Y_2 \subset \cdots \subset Y_k.
\]
This extends to
\[
0\subset Y_1\subset  Y_2 \subset \cdots \subset Y_k \subset Y_{k-1}^\perp \subset \cdots \subset Y_1^\perp\subset W^{\square,k}.
\]
We adopt the convention in the Sect. \ref{sec:FC of classical} Case 2 for terms $Y_i,A_i$ for $i>k$. To define a character on $[N_{\scw}^{\heartsuit}]$, we have to choose
\[
A_i:Y_i/Y_{i-1}\to Y_{i+1}/Y_i,\qquad i=1,\cdots, k.
\]
For $i=1,\cdots,k-1$, we can choose $A_i$ `canonically' as the identity map when identifying $Y_i/Y_{i-1}$ with $W$ via the obvious projection. The only real choice is an isomorphism $A_{k}\in\Hom_D(W^{\nabla},W^{\Delta})$. (Note that this is always possible for the cases considered in this section.) In any case, what is important is that $A_i,i=1,\cdots, 2k-1$ is an isomorphism.

\begin{Thm}\label{thm:unfolding of fourier coefficient}
When $\Re s\gg 0$,
\[
\int\limits_{[N_{\mathcal{W}}^{\heartsuit}]} E(\phi^{(s)})(ug) \psi_\sca(u) \ du=\int\limits_{N_{\scw}^{\heartsuit}(F) \cap P(F)\backslash N_{\scw}^{\heartsuit}(\mathbb{A})}  \phi^{(s)}(  u g) \psi_\sca(u) \ du.
\]
\end{Thm}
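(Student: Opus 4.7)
The plan is to perform the standard unfolding and reduce to the identity double coset by combining the two available vanishing mechanisms. For $\Re s\gg0$ the Eisenstein series converges absolutely, so sum and integral commute and the left-hand side equals
\[
\sum_{\gamma_0\in P(F)\backslash G^{\square,k}(F)/N_{\scw}^{\heartsuit}(F)} \ \int\limits_{(\gamma_0^{-1}P(F)\gamma_0\,\cap\, N_{\scw}^{\heartsuit}(F))\backslash N_{\scw}^{\heartsuit}(\ba)} \phi^{(s)}(\gamma_0 u g)\,\psi_\sca(u)\,du.
\]
Identifying $P(F)\backslash G^{\square,k}(F)$ with $\Omega(W^{\square,k})$ via $\gamma_0\mapsto L:=W^{\Delta,k}\gamma_0$, Lemma \ref{lem:orbit under para and nil}(2) parametrizes the double cosets by the sequence $V_i:=(L\cap Y_i)/(L\cap Y_{i-1})\subset Y_i/Y_{i-1}$ for $i=1,\ldots,k+1$. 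The identity coset $\gamma_0=1$ (so $L=W^{\Delta,k}$, $V_1=\cdots=V_k=0$ and $V_{k+1}=Y_{k+1}/Y_k=W_1^\Delta$) produces exactly the claimed right-hand side, so the theorem reduces to showing that every other double coset contributes zero.

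The first pruning step is Proposition \ref{prop:first vanishing criterion}: if $A_i(V_i)\not\subset V_{i+1}$ for some $i$, then $\psi_\sca$ is non-trivial on $(N(L)\cap N_{\scw}^{\heartsuit})(\ba)$, hence on $(P(L)\cap N_{\scw}^{\heartsuit})(\ba)$. Since $\phi^{(s)}(\gamma_0\,\cdot\,g)$ is left-invariant under $(P(L)\cap N_{\scw}^{\heartsuit})(F)$ by the $P(F)$-equivariance of the section, writing the inner integral as iterated integration with $[P(L)\cap N_{\scw}^{\heartsuit}]$ innermost produces a non-trivial character integral, and the whole contribution vanishes. With $A_1,\ldots,A_{k-1}$ taken as canonical identities $W\to W$, $A_k$ an isomorphism $W_1^\nabla\xrightarrow{\sim}W_1^\Delta$, and $A_{k+1},\ldots,A_{2k-1}$ their $\rho$-duals, the surviving orbits are precisely those satisfying $V_1\subset\cdots\subset V_k\subset W$, $A_k(V_k)\subset V_{k+1}\subset W_1^\Delta$, and the corresponding dual containments for $i>k+1$.

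To kill any surviving non-identity orbit I would invoke the type $(k,n)_D$ hypothesis on $\theta$. For $L\ne W^{\Delta,k}$ the chain $V_\bullet$ is non-trivial, so $(P(L)\cap N_{\scw}^{\heartsuit})$ projects onto a non-trivial unipotent subgroup of the Levi $M_P\simeq\GL_{kn,D}$ after conjugation by $\gamma_0$. Factoring the inner integral — first along the piece landing in the unipotent radical $N_P$ of $P$, then along the Levi piece — exhibits it as an $(N(\scy'),\psi_{\sca'})$-Fourier coefficient of the inducing representation $\theta$, for a pair built canonically from $(V_\bullet,A_\bullet)$ using the combinatorics of Sect. \ref{sec:FC of classical} Case 2; the isotropy symmetry $\dim V_i+\dim V_{2k-i+1}=n$ is what knits the two halves of the flag $\scy'$ together inside $W^{\Delta,k}\simeq W^k$. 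One then checks that $(N(\scy'),\psi_{\sca'})$ lies on a nilpotent orbit strictly higher than $(k^n)_D$ in the sense of Definition \ref{def:orbit kn}, and the coefficient vanishes by Definition \ref{def:global (k,n)}(2). The main obstacle is precisely this last verification: for every surviving non-identity orbit, showing that the emerging pair really does sit on a strictly higher nilpotent orbit of $\GL_{kn,D}$, with care required for the $\rho$-dual half of the flag ($A_i$ with $i>k$), the possibility that $V_{k+1}\subsetneq W_1^\Delta$, and the interaction with the non-canonical isomorphism $A_k$ — this is where the uniform linear-algebraic framework of Sect. \ref{sec:contribution in unfolding} is supposed to replace what would otherwise be a case-by-case combinatorial bookkeeping.
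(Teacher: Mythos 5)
Your overall strategy — unfold, identify the double cosets by $V_i=(L\cap Y_i)/(L\cap Y_{i-1})$, prune by the character condition (Proposition \ref{prop:first vanishing criterion}), and then by the nilpotent orbit attached to $\theta$ — is exactly the skeleton of the paper's proof of Theorem \ref{thm:unfolding of fourier coefficient}, and the identification of the identity coset with the right-hand side is correct. Two remarks on the first pruning: the character is nontrivial on $(N(L)\cap N_{\scw}^{\heartsuit})(R)$ with $R=F\backslash\ba$, and the inner integration must be taken over $[N(L)\cap N_{\scw}^{\heartsuit}]$ (not over $[P(L)\cap N_{\scw}^{\heartsuit}]$), since $\phi^{(s)}(\gamma u g)$ is constant in $u$ precisely when $\gamma u\gamma^{-1}\in N(W^{\Delta,k})(\ba)$; for $u$ in $P(L)\cap N_{\scw}^{\heartsuit}$ the Levi part transforms by $\theta\cdot\nu^s$, which is exactly what the subsequent step exploits. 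This is a minor slip, fixable by reordering.

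The genuine gap is in the second pruning, and you flag it yourself: ``One then checks that $(N(\scy'),\psi_{\sca'})$ lies on a nilpotent orbit strictly higher than $(k^n)_D$ \dots this is where the uniform linear-algebraic framework \dots is supposed to replace what would otherwise be a case-by-case combinatorial bookkeeping.'' The proof does not carry this out, and the worries you list ($\rho$-dual half, $V_{k+1}\subsetneq W_1^\Delta$, the non-canonical $A_k$) hint at a detour. The paper's observation is cleaner and avoids all of it: after the first pruning, the restricted maps $A_i$ on the $V_i$'s are well-defined, and each is \emph{injective} because the ambient $A_i$ is an isomorphism for $1\le i\le 2k-1$. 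If $L\cap W^{\nabla,k}=L\cap Y_k\neq\{0\}$, pick the smallest $i_0$ with $L\cap Y_{i_0}\neq\{0\}$ (necessarily $i_0\le k$); the composition of the $k$ successive injective restricted maps starting at $i_0$ is then nonzero, so the induced pair on $\GL_D(L)$ lies in an orbit strictly higher than $(k^n)_D$, and the inner Fourier coefficient of $\theta$ vanishes. This is the content of Lemma \ref{lem:get higher orbit}, which your proposal never identifies. Conversely, the surviving case $L\cap W^{\nabla,k}=\{0\}$ forces $L$ to be a maximal isotropic complement of $W^{\nabla,k}$, and such complements form a single $N(W^{\nabla,k})$-orbit with $N(W^{\nabla,k})\subset N_{\scw}^{\heartsuit}$, giving $\gamma\in P\cdot N_{\scw}^{\heartsuit}$ — another step your proposal does not supply. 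Without these two lemmas the proof is not complete; the ``isotropy symmetry'' $\dim V_i+\dim V_{2k-i+1}=n$ is true but is not what the paper uses and does not by itself close the gap.
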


The proof of this theorem is given in the next section.

\begin{Rem}\label{rem:eulerian fourier}
Note that $(N_{\scw}^{\heartsuit} \cap P,\psi_{\sca})$ defines a Fourier coefficient for $\GL_D(W^{\Delta,k})$ that lies in the orbit $(k^n)_D$. Thus the right-hand side is Eulerian for decomposable $\phi^{(s)}$. Indeed, let
\[
f^{(s)}(g)=f_{\phi}^{(s)}(g):=\int\limits_{[N_{\scw}^{\heartsuit} \cap P]} \phi^{(s)}( u  g)\psi_{\sca}(u) \ du.
\]
For fixed $g$, $\phi^{(s)}(g)\mapsto f^{(s)}(g)$ is a global $(k,n)_D$-functional for $\phi^{(s)}(g)\in \theta\cdot \nu^s$. By Lemma \ref{lem:eulerian}, for decomposable data $\phi^{(s)}=\otimes_v'\phi_v^{(s)}$, $f^{(s)}(g)$ is also decomposable:
\[
f^{(s)}(g)=\prod_v f_v^{(s)}(g_v).
\]
This implies that
\[
\begin{aligned}
&\int\limits_{N_{\scw}^{\heartsuit}(F) \cap P(F)\backslash N_{\scw}^{\heartsuit}(\mathbb{A})}  \phi^{(s)}(  u g) \psi_\sca(u) \ du \\
=&\int\limits_{N(W^{\nabla,k})(\ba)}\int\limits_{[N_{\scw}^{\heartsuit} \cap P]}  \phi^{(s)}(  u' u g) \psi_\sca(u'u) \ du' \ du\\
=&\prod_v \int\limits_{N(W^{\nabla,k})(F_v)}f_v^{(s)}(u_v g_v) \psi_\sca(u_v) \ du_v.\\
\end{aligned}
\]
\end{Rem}

\begin{Rem}
By analyzing the local integral in the Euler product, one can confirm special instances of \cite{Ginzburg06} Conjecture 5.13 regarding the nilpotent orbits attached to Eisenstein series on symplectic and split even orthogonal groups.
\end{Rem}

\subsection{Proof of Theorem \ref{thm:unfolding of fourier coefficient}}
\label{sec:unfolding of FC}
Recall that
\[
 E(\phi^{(s)})(g)=\sum_{\gamma \in P(F)\backslash G^{\square,k}(F)}\phi^{(s)}(\gamma g).
\]
When $\Re s\gg 0$,
\[
\begin{aligned}
&\int\limits_{[N_{\mathcal{W}}^{\heartsuit}]} E(\phi^{(s)})(ug) \psi_\sca(u) \ du\\
=&\int\limits_{[N_{\mathcal{W}}^{\heartsuit}]} \sum_{\gamma \in P(F)\backslash G^{\square,k}(F)} \phi^{(s)}(\gamma u g) \psi_\sca(u) \ du\\
=& \int\limits_{[N_{\mathcal{W}}^{\heartsuit}]} \sum_{\gamma \in P(F)\backslash G^{\square,k}(F)/N_{\mathcal{W}}^{\heartsuit}(F)} \
\sum_{\gamma'\in \gamma^{-1}P(F)\gamma\cap N_{\mathcal{W}}^{\heartsuit}(F)\bs N_{\mathcal{W}}^{\heartsuit}(F)}  \phi^{(s)}(\gamma \gamma' u g) \psi_\sca(u) \ du\\
=&\sum_{\gamma \in P(F)\backslash G^{\square,k}(F)/N_{\mathcal{W}}^{\heartsuit}(F)} \ \int\limits_{\gamma^{-1}P(F)\gamma\cap N_{\mathcal{W}}^{\heartsuit}(F)\backslash N_{\mathcal{W}}^{\heartsuit}(\mathbb{A})}  \phi^{(s)}(\gamma  u g) \psi_\sca(u) \ du.
\end{aligned}
\]
We now study the contribution for each $\gamma$.

Recall that $P\bs G^{\square,k}$ can be identified with the variety $\Omega(W^{\square,k})$ of maximal totally isotropic subspaces $W^{\Delta,k}$ via $P\gamma\mapsto W^{\Delta,k}\gamma$. If we write $L=W^{\Delta,k}\gamma$, then $P(L)=\gamma^{-1}P\gamma$. Given $\gamma$, write
\[
\begin{aligned}
I_{\gamma}(g)=&\int\limits_{\gamma^{-1}P(F)\gamma\cap N_{\mathcal{W}}^{\heartsuit}(F)\backslash N_{\mathcal{W}}^{\heartsuit}(\mathbb{A})}  \phi^{(s)}(\gamma  u g) \psi_\sca(u) \ du\\
=&\int\limits_{P(L)(F)\cap N_{\mathcal{W}}^{\heartsuit}(F)\backslash N_{\mathcal{W}}^{\heartsuit}(\mathbb{A})}  \phi^{(s)}(\gamma  u g) \psi_\sca(u) \ du.
\end{aligned}
\]
We now use results from Sect. \ref{sec:contribution in unfolding} to look for $\gamma$ such that $I_\gamma\neq 0$.

\begin{Lem}
If $\psi_{\sca}|_{[N(L)\cap N_{\mathcal{W}}^{\heartsuit}]}$ is nontrivial, then $I_\gamma(g)= 0$ for all $g\in G^{\square,k}(\ba)$.
\end{Lem}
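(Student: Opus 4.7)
The strategy is to isolate an inner integration over $[H]$, where $H := N(L) \cap N_{\mathcal{W}}^{\heartsuit}$, and exploit the hypothesis that $\psi_{\sca}|_{[H]}$ is nontrivial to force this inner integral to vanish.

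First I would record two preparations. Put $K := P(L) \cap N_{\mathcal{W}}^{\heartsuit}$, so that the integration in $I_\gamma$ is over $K(F) \backslash N_{\mathcal{W}}^{\heartsuit}(\ba)$. Since $N(L)$ is the unipotent radical of $P(L)$, we have $H \triangleleft K$. Moreover, the integrand $u \mapsto \phi^{(s)}(\gamma u g)$ is left-invariant under $N(L)(\ba)$: conjugation by $\gamma$ sends $N(L)$ to the unipotent radical $N := N(W^{\Delta,k})$ of $P$, and $\phi^{(s)}$ is left $N(\ba)$-invariant because the inducing data $\theta \cdot \nu^s$ is extended trivially to $N$ in normalized induction. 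Consequently the integrand is left-invariant under $H(\ba) \subset N(L)(\ba)$.

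The core step is an inner--outer decomposition of the quotient. Since $H \triangleleft K$ gives $H(\ba) \triangleleft K(\ba)$, the set $K(F) H(\ba)$ is a subgroup of $N_{\mathcal{W}}^{\heartsuit}(\ba)$; and since $H$ is an $F$-closed subgroup of $K$, one has $K(F) \cap H(\ba) = H(F)$, producing a measure-preserving identification $[H] \xrightarrow{\sim} K(F) \backslash K(F) H(\ba)$ via $[h] \mapsto K(F) h$. Combining this with Fubini (legitimate for $\Re s \gg 0$, where the unfolding converges absolutely) I rewrite
\[
I_\gamma(g) = \int\limits_{K(F) H(\ba) \backslash N_{\mathcal{W}}^{\heartsuit}(\ba)} \int\limits_{[H]} \phi^{(s)}(\gamma h u g) \psi_\sca(hu) \, dh \, du.
\]

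To conclude, the left $H(\ba)$-invariance recorded above removes $h$ from the argument of $\phi^{(s)}$, and $\psi_\sca(hu) = \psi_\sca(h) \psi_\sca(u)$, so the inner integral becomes $\phi^{(s)}(\gamma u g) \psi_\sca(u) \int_{[H]} \psi_\sca(h) \, dh$. The last factor is $0$ by the hypothesis that $\psi_\sca|_{[H]}$ is nontrivial, yielding $I_\gamma(g) = 0$. The only non-formal point in this plan is the coset identification $K(F) \backslash K(F) H(\ba) \cong [H]$, which rests on the normality $H \triangleleft K$; everything else is the standard vanishing-by-nontrivial-character device from Eisenstein series unfolding.
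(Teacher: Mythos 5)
Your proof is correct and follows essentially the same route as the paper's: the paper observes that $I_\gamma(g)$ contains the integral over $[N(L)\cap N_{\scw}^{\heartsuit}]$ as an inner integral, notes that $\phi^{(s)}(\gamma ug)$ is constant in $u$ there because $\gamma N(L)\gamma^{-1}=N(W^{\Delta,k})$ and $\phi^{(s)}$ is left $N(W^{\Delta,k})(\ba)$-invariant, and concludes by integrating the nontrivial character over the compact quotient. You have simply made explicit the Fubini/coset decomposition (using $H\triangleleft K$ and $K(F)\cap H(\ba)=H(F)$) that the paper leaves implicit when it says the integral is "contained as an inner integral."
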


\begin{proof}
Indeed, $I_\gamma(g)$ contains
\begin{equation}\label{eq:inner in first step in FC}
\int\limits_{[N(L)\cap N_{\mathcal{W}}^{\heartsuit}]}  \phi^{(s)}(\gamma  u g) \psi_\sca(u) \ du
\end{equation}
as an inner integral. As a function of $u\in [N(L)\cap N_{\mathcal{W}}^{\heartsuit}]$, $\phi^{(s)}(\gamma u g)$ is trivial. We conclude that \eqref{eq:inner in first step in FC} is zero. This implies that $I_\gamma(g)=0$ for all $g\in G^{\square,k}(\ba)$.
\end{proof}

The orbits such that $\psi_{\sca}|_{[N(L)\cap N_{\mathcal{W}}^{\heartsuit}]}$ is nontrivial can be eliminated. By Proposition \ref{prop:first vanishing criterion}, from now on we can assume that
\[
A_i(L\cap Y_i/L\cap Y_{i-1})\subset L\cap Y_{i+1}/L\cap Y_{i},\qquad i=1,\cdots, 2k-1.
\]
This gives a pair which defines a Fourier coefficient in $M(L)=\GL_D(L)$. The flag defining the unipotent subgroup is
\[
0\subset L\cap Y_1\subset  \cdots \subset L\cap Y_{k} \subset L\cap Y_{k-1}^\perp \subset\cdots \subset L\cap Y_1^{\perp}.
\]
%Note that coefficient here might not be a neutral one.

\begin{Lem}\label{lem:get higher orbit}
If $L\cap W^{\nabla,k}\neq \{0\}$, then the pair $(N(L) \cap N_{\scw}^{\heartsuit}\bs P(L) \cap N_{\scw}^{\heartsuit},\psi_\sca)$ lies in an orbit higher than $(k^n)_D$.
\end{Lem}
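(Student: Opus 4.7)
The plan is to exploit the fact that in this particular flag every transition map $A_i$ (for $i=1,\dots,2k-1$) is an isomorphism. Indeed, for $i\neq k$ the map $A_i$ is (minus the $\rho$-dual of) the canonical identity on $W$ under the identifications $Y_i/Y_{i-1}\cong W_{k-i+1}^\nabla\cong W$ (respectively $W_i^\Delta\cong W$ on the right half of the flag), while $A_k\in \Hom_D(W^\nabla,W^\Delta)$ is itself an isomorphism by construction. Consequently, after restriction to the flag $0\subset L\cap Y_1\subset\cdots\subset L\cap Y_1^\perp=L$, every induced transition map $\bar A_i\colon L\cap Y_i/L\cap Y_{i-1}\to L\cap Y_{i+1}/L\cap Y_i$ will be injective.

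First I would appeal to the previous lemma to reduce to the case where $\psi_\sca|_{(N(L)\cap N_\scw^\heartsuit)(R)}$ is trivial. Proposition \ref{prop:first vanishing criterion} then gives $A_i(L\cap Y_i/L\cap Y_{i-1})\subset L\cap Y_{i+1}/L\cap Y_i$ for every $i$, so that the restricted maps $\bar A_i$ are well defined and, by the remark above, injective.

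Next I would introduce $j:=\min\{i : L\cap Y_i\neq 0\}$. The hypothesis $L\cap Y_k\neq 0$ forces $j\le k$, and since $L\cap Y_{j-1}=0$ the quotient $L\cap Y_j/L\cap Y_{j-1}$ equals $L\cap Y_j$ and is nonzero. Propagating along the injective $\bar A_i$, a one-line induction yields $L\cap Y_i/L\cap Y_{i-1}\neq 0$ for every $i\ge j$. The punchline is then immediate: since $j\le k$, we have $j+k-1\le 2k-1$, so the $k$-fold composition $\bar A_{j+k-1}\circ\cdots\circ\bar A_j$ is a valid chain inside the flag of length $2k$ defining the Fourier coefficient on $\GL_D(L)$; being a composition of $k$ consecutive injections starting from a nonzero space, it is nonzero. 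By Definition \ref{def:orbit kn} the pair $(N(L)\cap N_\scw^\heartsuit\bs P(L)\cap N_\scw^\heartsuit, \psi_\sca)$ lies in an orbit higher than $(k^n)_D$.

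The only point requiring care is the bookkeeping for the $A_i$ with $i>k$: one must check that the duality convention producing $A_{k+i}$ from $A_{k-i}$ yields isomorphisms between the quotients $W_i^\Delta\cong Y_{k+i}/Y_{k+i-1}$ and $W_{i+1}^\Delta\cong Y_{k+i+1}/Y_{k+i}$. This is formal since the $\rho$-dual of an isomorphism is an isomorphism, so no real obstacle arises — the essential content is just the "nested increasing" picture of the $L\cap Y_i$ combined with the injectivity of all $A_i$.
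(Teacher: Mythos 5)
Your proof is correct and mirrors the paper's argument: locate the smallest index $j$ with $L\cap Y_j\neq\{0\}$ (necessarily $j\le k$ because $L\cap Y_k=L\cap W^{\nabla,k}\neq\{0\}$), use Proposition~\ref{prop:first vanishing criterion} together with the fact that every $A_i$ ($i=1,\dots,2k-1$) is an isomorphism to see that each restricted $\bar A_i$ is an injection, and then observe that the $k$-fold composition $\bar A_{j+k-1}\circ\cdots\circ\bar A_j$ starts from a nonzero space, hence is nonzero, placing the coefficient in an orbit higher than $(k^n)_D$. One small slip: the induced flag in $\GL_D(L)$ terminates at $L\cap Y_{2k}=L$, not at $L\cap Y_1^\perp$ (which is generally a proper subspace of $L$); this does not affect the argument.
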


\begin{proof}

Suppose that $L\cap W^{\nabla,k}\neq \{0\}$. Let  $i$ is the smallest index such that $L\cap Y_{i-1}=\{0\}$ and $L\cap Y_i\neq \{0\}$. The isomorphism $A_i:Y_i/Y_{i-1}\to Y_{i+1}/Y_i$ restricts to the map
\[
L\cap Y_i/L\cap Y_{i-1}\to L\cap Y_{i+1}/L\cap Y_i,
\]
which is an injection. In particular, we know that $L\cap Y_{i+1}/L\cap Y_i \neq \{0\}$. We now can repeat the same argument to obtain a sequence of injective maps
\[
L\cap Y_i\xrightarrow{A_i} L\cap Y_i\bs L\cap Y_{i+1} \xrightarrow{A_{i+1}}\cdots \to L\cap W^{\square,k}/L\cap Y_1^\perp.
\]
This sequence consists of at least $k$ injections and their composition is nonzero. Thus the coefficient is higher than $(k^n)_D$.
\end{proof}

Thus such orbits can be eliminated as $\theta$ is a representation of type $(k,n)_D$. We are left with orbits such that $L\cap W^{\nabla,k}=\{0\}$.
\begin{Lem}
If $L=W^{\Delta,k}\gamma$ satisfying $L\cap W^{\nabla,k}=\{0\}$, then $\gamma$ in the double coset $P\cdot N_{\scw}^{\heartsuit}$.
\end{Lem}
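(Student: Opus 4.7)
The plan is to apply Lemma \ref{lem:orbit under para and nil}(2) to the pair $(L,L')=(L,W^{\Delta,k})$. That is, I would show there exists $u\in N_{\scw}^{\heartsuit}$ with $L=W^{\Delta,k} u$; combined with $L=W^{\Delta,k}\gamma$, this forces $\gamma u^{-1}\in\mathrm{Stab}(W^{\Delta,k})=P$, and hence $\gamma\in P\cdot N_{\scw}^{\heartsuit}$. By the cited lemma, it suffices to verify the equality
\[
L\cap Y_i/L\cap Y_{i-1} \;=\; W^{\Delta,k}\cap Y_i/W^{\Delta,k}\cap Y_{i-1}
\]
as subsets of $Y_i/Y_{i-1}$ for $i=1,\ldots,k+1$, where $Y_{k+1}=Y_{k-1}^\perp$ in the Case 2 convention of Sect. \ref{sec:FC of classical}.

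For $i=1,\ldots,k$ the equality is immediate: $Y_i\subset Y_k=W^{\nabla,k}$, and since $W^{\square,k}=W^{\Delta,k}\oplus W^{\nabla,k}$ is a complete polarization, $W^{\Delta,k}\cap Y_i=\{0\}$; by hypothesis also $L\cap Y_i\subset L\cap W^{\nabla,k}=\{0\}$. The only nontrivial case is $i=k+1$. A direct decomposition, using that the form vanishes between distinct $W_a^\square$ and that $W_j^\nabla$ is Lagrangian in $W_j^\square$, yields $Y_{k+1}=W_1^\Delta\oplus Y_k$. Consequently $W^{\Delta,k}\cap Y_{k+1}=W_1^\Delta$, and this projects isomorphically onto $Y_{k+1}/Y_k\cong W_1^\Delta$, of $D$-dimension $n$.

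On the $L$ side, I would exploit the identity $L=L^\perp$ (since $L$ is maximal totally isotropic) together with the dimension formula $\dim_D L\cap U^\perp = \dim_D(L+U)^\perp = \dim_D W^{\square,k}-\dim_D(L+U)$. Applied to $U=Y_{k-1}$ with $L\cap Y_{k-1}\subset L\cap Y_k=\{0\}$, this gives $\dim_D(L+Y_{k-1})=kn+(k-1)n=(2k-1)n$, hence $\dim_D L\cap Y_{k+1}=n$. The natural injection $L\cap Y_{k+1}\hookrightarrow Y_{k+1}/Y_k$ (again because $L\cap Y_k=\{0\}$) between two $D$-modules of the same dimension $n$ is then an isomorphism, matching the $W^{\Delta,k}$-side and completing the verification.

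The main technical point to watch is the dimension formula $\dim_D L+\dim_D L^\perp=\dim_D W^{\square,k}$ for free $D$-modules equipped with a non-degenerate $\epsilon$-skew hermitian form, which is standard over a field but requires some care in the quaternionic case. Once this is in hand, the argument is a pure dimension count that avoids constructing $u$ explicitly; in particular, there is no need to describe $L$ as the graph of a hermitian map $W^{\Delta,k}\to W^{\nabla,k}$, although this alternative realization of $u\in N_{\scw}^{\heartsuit}$ would give the same conclusion by hand.
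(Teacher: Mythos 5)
Your proof is correct and elaborates precisely the first route the paper proposes (``This is a consequence of Lemma~\ref{lem:orbit under para and nil}''), which the paper leaves as a one-line citation; the dimension bookkeeping you carry out, including $Y_{k+1}=Y_{k-1}^\perp=W_1^\Delta\oplus Y_k$, $W^{\Delta,k}\cap Y_{k+1}=W_1^\Delta$, and $\dim_D L\cap Y_{k+1}=n$ via $L=L^\perp$, is exactly what is needed to invoke part~(2) of that lemma. The paper then offers a second, ``direct'' argument instead: since $L$ and $W^{\Delta,k}$ are both Lagrangian complements to $W^{\nabla,k}$, one writes $L=W^{\Delta,k}p$ with $p\in P(W^{\nabla,k})$, factors $p=mu$ with $m$ in the Levi $\GL_D(W^{\Delta,k})\subset P$ and $u\in N(W^{\nabla,k})\subset N_{\scw}^{\heartsuit}$. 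The two routes buy slightly different things: the paper's direct decomposition is shorter and produces $u$ explicitly (essentially as the graph of a hermitian map $W^{\Delta,k}\to W^{\nabla,k}$), while your dimension-count argument is purely numerical, uniform across the $i\le k$ steps, and makes the application of Lemma~\ref{lem:orbit under para and nil}(2) completely explicit. Your caveat about $\dim_D L+\dim_D L^\perp=\dim_D W^{\square,k}$ is indeed the one point requiring care, but since $D$ is a division algebra in Sect.~\ref{sec:eisenstein series}, left $D$-modules are free and the formula holds verbatim.
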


\begin{proof}
This is a consequence of Lemma \ref{lem:orbit under para and nil}. We can also argue directly. Since $W^{\Delta,k}\cap W^{\nabla,k}=\{0\}$ as well, there exists some $p\in P(W^{\nabla,k})$ such that $L=W^{\Delta,k}p$.
We write $p=mu$ where $m\in \GL_D(W^{\Delta,k}) \simeq \GL_D(W^{\nabla,k})$ and $u\in N(W^{\nabla,k})$. We have $m\in P$ and $u\in N_{\scw}^{\heartsuit}$. This completes the proof.
\end{proof}

We now know that only the double coset $P\cdot N_{\scw}^{\heartsuit}$ gives nonzero contribution.
We can choose $L=W^{\Delta,k}$ or $\gamma=1$ to calculate its contribution. Therefore,
\[
\int\limits_{[N_{\mathcal{W}}^{\heartsuit}]} E(\phi^{(s)})(ug) \psi_\sca(u) \ du=I_1(g),
\]
and the contribution is
\[
\int\limits_{N_{\scw}^{\heartsuit}(F)\cap P(F)\backslash N_{\scw}^{\heartsuit}(\mathbb{A})}  \phi^{(s)}(  u g) \psi_\sca(u) \ du.
\]
This completes the proof.

\section{The twisted doubling integrals}\label{sec:twisted doubling}

We now introduce the twisted doubling integrals.

\subsection{Fourier coefficients}\label{sec:FC in global integral}

We first construct a  Fourier coefficient associated with the orbit $((2k-1)^n 1^n)_D$.
We choose the following flag of totally isotropic subspaces in $W^{\square,k}$
\begin{equation}\label{eq:flag in twisted doubling}
0\subset W_k^\nabla \subset W_{k-1}^\nabla \oplus W_{k}^\nabla \subset \cdots \subset W_2^\nabla \oplus \cdots \oplus W_{k}^\nabla.
\end{equation}
Let $P_{\scw,k}^{\bullet}=M_{\scw,k}^{\bullet}\cdot N_{\scw,k}^{\bullet}$ be the corresponding parabolic subgroup. Then
\[
M_{\scw,k}^{\bullet}\simeq \GL_D(W_{k}^\nabla)\times \cdots \times \GL_D(W_{2}^\nabla) \times G(\scw_1^\square).
\]
When $k$ is clear in the context, we usually write drop the subscript.
The character is defined on the group $N_{\scw,k}^{\bullet}$.

We reindex the flag in \eqref{eq:flag in twisted doubling} as
\[
0\subset Y_1\subset \cdots \subset Y_{k-1}
\]
and extend it to
\[
0\subset  Y_1\subset \cdots \subset Y_{k-1} \subset Y_{k-1}^{\perp}\subset \cdots \subset Y_1^\perp\subset W^{\square,k}.
\]
We again adopt the convention in Sect. \ref{sec:FC of classical} Case 1 when discussing terms for $i\geq k$. Note that except $Y_{k-1}^\perp/Y_{k-1}=W_1^\square$, the quotient between two successive terms is isomorphic to either $W^\Delta$ or $W^\nabla$.

To describe the character, we have to specify elements
\[
A_i\in \Hom_D(Y_i/Y_{i-1},Y_{i+1}/Y_i)\simeq \End_D(W^\nabla),\qquad i=1,\cdots, k-2,
\]
and
\[
A_{k-1}\in \Hom_D(Y_{k-1}/Y_{k-2},Y_{k-1}^{\perp}/Y_{k-1})\simeq \Hom_D(W^\nabla,W^\square).
\]
We choose $A_1,\cdots,A_{k-2}$ to be the identity map in $\End_D(W^{\nabla})$. The map
\[
Y_{k-1}/Y_{k-2}\xrightarrow{A_{k-1}} Y_{k-1}^{\perp}/Y_{k-1}\xrightarrow{A_k} Y_{k-2}^{\perp}/Y_{k-1}^{\perp}
\]
is translated from
\[
W^\nabla\to W_+\oplus W_- \to W^\Delta,\qquad x^{\nabla}\mapsto (2x,0)\mapsto 2x^{\Delta}.
\]
We remind the reader that the identification here is not very important. What is important is that $A_{k}\circ A_{k-1}$ is an isomorphism.

%Given an element $u$ in $N_{\scw}^{\bullet}$, by the discussion in Sect. ?, we obtain a family of linear maps
%\[
%L_i\in \Hom(W_{i}^\nabla,W_{i+1}^\nabla), \ i=2,\cdots, k-1,
%\]
%and
%\[
%L_{1}\in \Hom(W_1+W_{-1},W_{2}^\nabla).
%\]
%Recall that $\tau:D\to E$ is the reduced trace. The corresponding coefficient is defined on $N_{\scw}^{\bullet}$ by the following formula
%\[
%\psi_{\scw}^{\bullet}(u):=\psi(\tau(L_2+\cdots+L_{k-1})+\tau(\mathrm{em}_{W_1}\circ L_{1})).
%\]
%Here $\mathrm{em}_{W_1}:W_1\to W_1+W_{-1}$ is the embedding given by $v\mapsto (v,0)$. Recall that we view $L_2,\cdots, L_{k-1}$ as elements in $\End(W)$ via $W_i^\nabla \simeq W$. The map $\mathrm{em}_{W_1}\circ L_{1}: W_1\to W_2^\nabla$ is viewed as an element in $\End(W)$ in the same way.

It is straightforward to the check the pair $(N_{\scw}^{\bullet},\psi_{\scw}^{\bullet})$ gives a Fourier coefficient associated to the orbit $((2k-1)^{n}1^{n})_D$.

Given $(g_1,g_2)\in G\times G$, we define its action on $W_k^\nabla\oplus \cdots W_2^\nabla \oplus W_{1,+}\oplus W_{1,-}$ via
\[
(x_k^\nabla, x_{k-1}^\nabla,\cdots, x_{2}^\nabla, x_1^+, x_1^-)(g_1,g_2)=(x_k^\nabla g_1, x_{k-1}^\nabla ,\cdots, x_{2}^\nabla g_1, x_1^+ g_1, x_1^- g_2).
\]
%Note that the dual of $W_i^\Delta$ can be canonically identified with $W_i^\nabla$. This extends to an action of $G\times G$ on $W^{\square,k}$:
%\[
%(g_1,g_2)\cdot (v_2^\nabla, \cdots, v_{k}^\nabla, v_1^+, v_1^-, v_k^\Delta,\cdots,v_2^\Delta)=(g_1v_2^\nabla, \cdots, g_1v_{k}^\nabla, g_1v_1^+, g_2v_1^-, g_1v_k^\Delta,\cdots,g_1v_2^\Delta).
%\]
This extends to an action of $G\times G$ on $W^{\square,k}$ and gives a map
\[
\iota:G\times G\to G^{\square,k}.
\]
It is in fact a homomorphism and in particular, the images of these two copies of $G$ commute in $G^{\square,k}$.
It is straightforward to check that $\iota(G\times G)$ lies in the stabilizer of $\psi_{\scw}^{\bullet}$ in $G^{\square,k}$.

For a subgroup $J$ of $G$, we define
\[
J^\diamondsuit=\{(g,g)\in G\times G\mid g\in J\}.
\]

\begin{Lem}
We have $\iota(G\times G)\cap P(W^{\Delta,k})=\iota(G^\diamondsuit)$.
\end{Lem}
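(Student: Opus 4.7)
The plan is to compute the action of $\iota(g_1,g_2)$ on each orthogonal summand $W_i^\square=W_{i,+}\oplus W_{i,-}$ of the decomposition $W^{\square,k}=\bigoplus_{i=1}^k W_i^\square$, and then read off when it stabilizes the maximal totally isotropic subspace $W^{\Delta,k}=\bigoplus_{i=1}^k W_i^\Delta$.

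First I would pin down the extension of $\iota$ to all of $W^{\square,k}$. The formula in the excerpt only specifies the action on $W_k^\nabla\oplus\cdots\oplus W_2^\nabla\oplus W_{1,+}\oplus W_{1,-}$, which has dimension $(k+1)n$, short of $\dim W^{\square,k}=2kn$. Since $\iota$ is declared to land in $G^{\square,k}$, it must preserve $\la\ ,\ \ra^{\square,k}$. On $W_i^\nabla$ for $i\geq 2$ the action reads $(x,-x)\mapsto(xg_1,-xg_1)$ under the identification $W_i^\nabla\cong W$, and the only form-preserving extension to the complementary $W_i^\Delta\subset W_i^\square$ is the diagonal $(u,v)\mapsto(ug_1,vg_1)$ on all of $W_i^\square$ (this indeed preserves $\la u,u'\ra-\la v,v'\ra$). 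On $W_1^\square$ the action is $(u,v)\mapsto(ug_1,vg_2)$ by definition.

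Next I would test the stabilization of $W^{\Delta,k}$ summand by summand. For $i\geq 2$, an element $(x,x)\in W_i^\Delta$ maps to $(xg_1,xg_1)\in W_i^\Delta$ automatically, so these summands impose no constraint. For $i=1$, $(x,x)\in W_1^\Delta$ maps to $(xg_1,xg_2)$, which lies in $W_1^\Delta$ iff $xg_1=xg_2$ for every $x\in W$, equivalently $g_1=g_2$. Hence $\iota(g_1,g_2)\in P(W^{\Delta,k})$ iff $(g_1,g_2)\in G^\diamondsuit$. The reverse inclusion $\iota(G^\diamondsuit)\subset P(W^{\Delta,k})$ follows from the same check applied to $g_1=g_2$.

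The only mild obstacle is justifying the extension of the action to the summands $W_i^\Delta$ for $i\geq 2$, since the excerpt is explicit only on $W_i^\nabla$ there; the extension is forced by $\iota$ being a homomorphism into $G^{\square,k}$, after which the verification reduces to the one-line computation on $W_1^\Delta$ above.
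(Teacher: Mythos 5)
Your key computation matches the paper's: on $W_1^\Delta$ the element $\iota(g_1,g_2)$ sends $(x,x)$ to $(xg_1,xg_2)$, which lies in $W_1^\Delta$ for every $x$ iff $g_1=g_2$, while the summands $W_i^\Delta$ with $i\geq 2$ impose no condition.

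The gap is in your justification of the extension of $\iota$ to all of $W^{\square,k}$. You assert that ``the only form-preserving extension to the complementary $W_i^\Delta\subset W_i^\square$ is the diagonal $(u,v)\mapsto(ug_1,vg_1)$.'' This is false: the isometries of $W_i^\square$ restricting to $(x,-x)\mapsto(xg_1,-xg_1)$ on the maximal isotropic subspace $W_i^\nabla$ form a coset of the Siegel unipotent radical of $G(W_i^\square)$ stabilizing $W_i^\nabla$, a nontrivial group. Every such extension preserves the form; only the one lying in the Levi also preserves $W_i^\Delta$, so form-preservation alone does not single out the diagonal action. The fallback claim in your final paragraph, that the extension is ``forced by $\iota$ being a homomorphism,'' is also not established --- one would have to rule out nontrivial cocycles valued in that unipotent radical, which you do not do. In fact the diagonal action of $g_1$ on $W_i^\square$ for $i\geq 2$ is part of the \emph{definition} of $\iota$ (implicit in the paper's phrase ``this extends to an action''), not a consequence that can be deduced from form-preservation or the homomorphism property as stated. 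Once that definition is taken as given, the remaining computation is correct and coincides with the paper's proof.
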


\begin{proof}
Let $\iota(g_1,g_2)$ be such an element. Since it is in $P(W^{\Delta,k})$, we know
\[
(x_2^\Delta g_1, x_3^\Delta g_1,\cdots, x_{k}^\Delta g_1, x_1^+ g_1, x_1^- g_2)\in W^{\Delta,k}
\]
with $x_1^+=x_1^-$. This implies that $x_1^+g_1=x_1^+g_2$ for all $x_1^+\in W$. In other words $g_1=g_2$.
\end{proof}

\begin{Lem}\label{lem:modular quasi}
The modular quasicharacter $\delta_{\iota(G\times G);N_{\scw}^{\bullet}}(\iota(g_1,g_2))=1$ for any $g_1,g_2\in G$.
\end{Lem}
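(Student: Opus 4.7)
The plan is to identify $\delta_{\iota(G\times G);N_{\scw}^{\bullet}}$ with the restriction of the modular quasicharacter $\delta_{P_{\scw}^{\bullet}}$ of the ambient parabolic to $\iota(G\times G)$, and then to exploit the fact that isometries of a (skew-)hermitian form have reduced norm of absolute value one. The first step is to check that $\iota(g_1,g_2)$ lies in the Levi $M_{\scw}^{\bullet}$ (not merely in $P_{\scw}^{\bullet}$). For $j\geq 2$ the element $\iota(g_1,g_2)$ preserves $W_j^{\nabla}$ and acts there as $g_1$ by definition; since it is an isometry and $W_j^{\Delta}$ is determined as the form-dual of $W_j^{\nabla}$ inside $W_j^{\square}$, it also preserves $W_j^{\Delta}$ (acting there as $g_1$). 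On $W_1^{\square}=W_{1,+}\oplus W_{1,-}$ it acts as $(g_1,g_2)\in G(\scw_1^{\square})$. Under the isomorphism
\[
M_{\scw}^{\bullet}\cong \GL_D(W_k^{\nabla})\times\cdots\times \GL_D(W_2^{\nabla})\times G(\scw_1^{\square}),
\]
the element $\iota(g_1,g_2)$ is therefore the tuple $(g_1,\ldots,g_1,(g_1,g_2))$. In particular $\iota(G\times G)$ normalizes $N_{\scw}^{\bullet}$, and $\delta_{\iota(G\times G);N_{\scw}^{\bullet}}$ coincides with the restriction of $\delta_{P_{\scw}^{\bullet}}$ to $\iota(G\times G)$.

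The second step uses the standard description of the modular character of a parabolic subgroup of a classical group stabilizing an isotropic flag: on $M_{\scw}^{\bullet}$ it has the form
\[
\delta_{P_{\scw}^{\bullet}}(m_1,\ldots,m_{k-1},g_0)=\prod_{i=1}^{k-1}|\nu(m_i)|^{c_i}\cdot \chi(g_0)
\]
for certain integers $c_i$ determined by the dimensions and a continuous character $\chi$ of $G(\scw_1^{\square})$. The character $\chi$ is in fact trivial: the ``form-dual'' Hom-pieces of $\mathfrak{n}_{\scw}^{\bullet}$ in which $G(\scw_1^{\square})$ acts (such as $\Hom_D(W_1^{\square},V)$ paired with its form-dual) contribute cancelling powers of $|\nu(g_0)|$, and in any case any continuous character from the classical group $G(\scw_1^{\square})$ to $\br_{>0}$ is automatically trivial.

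The third step, which is where the lemma really comes from, is the elementary observation that any $g\in G=G(\scw)$ satisfies $\nu(g)\,\nu(g)^{\rho}=1$ (obtained by applying the reduced norm to $gJg^{t\rho}=J$, where $J$ represents the form), so that $|\nu(g)|=1$ under the natural absolute value on the center $E$ of $D$. Substituting $m_i=g_1$ for $i=1,\ldots,k-1$ in the formula of Step~2 then yields
\[
\delta_{P_{\scw}^{\bullet}}(\iota(g_1,g_2))=\prod_{i=1}^{k-1}|\nu(g_1)|^{c_i}=1,
\]
which proves the lemma. The only step requiring genuine care is the computation of the exponents $c_i$ and the verification that the $G(\scw_1^{\square})$-contribution vanishes in Step~2; this is uniform across the five types $(D,\rho)$ once the block-Levi decomposition of $\mathfrak{n}_{\scw}^{\bullet}$ is written down, and is the main bookkeeping obstacle.
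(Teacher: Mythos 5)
Your proof is correct and matches the paper's intended argument. The paper's proof is a one-liner: it cites \cite{LR05} Lemma~1 (which gives the explicit formula for the modular character of a parabolic of an isometry group, expressed in powers of $|\nu(\cdot)|$ of the $\GL$-blocks of the Levi) and then observes that $|\nu(\iota(g_1,g_2))|=1$; this is exactly your Steps~2 and~3, spelled out.

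One small inaccuracy in Step~1. A maximal isotropic subspace of $W_j^{\square}$ has many Lagrangian complements, so $W_j^{\Delta}$ is \emph{not} ``determined as the form-dual of $W_j^{\nabla}$'', and an isometry preserving $W_j^{\nabla}$ need not preserve $W_j^{\Delta}$ (the stabilizer of $W_j^{\nabla}$ in $G(\scw_j^{\square})$ is the whole Siegel parabolic, not just its Levi). What actually forces $\iota(g_1,g_2)$ to preserve $W_j^{\Delta}$ for $j\geq 2$, and hence to land in the Levi $M_{\scw}^{\bullet}$ rather than merely in $P_{\scw}^{\bullet}$, is the definition of $\iota$ itself: for $j\geq 2$ it acts by $g_1$ on each of the factors $W_{j,+}$ and $W_{j,-}$, hence diagonally with respect to the $\Delta$--$\nabla$ decomposition of $W_j^{\square}$. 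With that repair the remainder of the argument is fine, and your observation that $\nu(g)\nu(g)^{\rho}=1$ for $g\in G$ is exactly the fact the paper invokes.
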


\begin{proof}
This follows from direct calculation using \cite{LR05} Lemma 1. Observe that $|\nu(\iota(g_1,g_2))|=1$ for any $(g_1,g_2)\in G\times G$.
\end{proof}

\subsection{The global integral}
Let $\pi$ be an irreducible cuspidal automorphic representation of $G(\ba)$ realized on a space $V_\pi\subset L^2(G(F)\bs G(\ba))$, where we fix an embedding $\pi\hookrightarrow V_\pi\subset \sca(G(\ba))$. The contragredient representation $\pi^\vee$ is realized on the complex conjugate $\overline{V_\pi}$ of $V_\pi$. The Petersson pairing $\scp=\scp_{\pi}: V_\pi\boxtimes \overline{V_\pi}\to \bc$ is defined by
\[
\scp_{\pi}(\xi_1\boxtimes \xi_2)=\int\limits_{G(F)\bs G(\ba)}\xi_1(g)\xi_2(g) \ dg.
\]
%We fix isomorphisms $V_\pi\simeq \otimes_v \pi_v$ and $\overline{V_\pi}\simeq \otimes_v \pi_v^\vee$ and choose standard local pairing $\scp_{\pi_v^\vee}:\pi_v^\vee\boxtimes \pi_v \to \bc$ in order that
%\[
%\scp_{\pi^\vee}(\xi)=\prod_v \scp_{\pi_v^\vee}(\xi_v)
%\]
%for all $\xi=\otimes_v \xi_v\in \overline{V_\pi}\boxtimes V_\pi$, where $\scp_{\pi_v^\vee}(\xi_{0,v})=1$ for almost all the $\bar K_v\times \bar K_v$-invariant vectors $\xi_{0,v}\in \pi_v^\vee\boxtimes \pi_v$ used to defined the restricted tensor products.

 For each pair of cusp forms $\xi_1\in V_\pi$ and $\xi_2\in \overline{V_\pi}$ and each section $\tilde\phi^{(s)}$ of $I(s,\theta)$, we consider the global zeta integral defined by
\begin{equation}\label{eq:global integral}
Z(\xi_1\boxtimes \xi_2,\phi^{(s)})=
\int\limits_{[G\times G]} \
\int\limits_{[N_{\scw}^{\bullet}]}
\chi_\theta(\nu(g_2))^{-1} \xi_1(g_1)\xi_2(g_2) E(\phi^{(s)})(u \cdot \iota(g_1,g_2))\psi_{\scw}^{\bullet}(u) \ du \ dg_1 \ dg_2.
\end{equation}
Since the two cusp forms are rapidly decreasing on $G(F)\bs G(\ba)$ and the Eisenstein series is only of moderate growth, we see that the integral converges absolutely for all $s$ away from the poles of the Eisenstein series and is hence meromorphic in $s$.

The main global identity is the following result.

\begin{Thm}\label{thm:statement of unfolding}
When $\Re s\gg 0$, $Z(\xi_1\boxtimes \xi_2,\phi^{(s)})$ equals
\[
\int\limits_{G^\diamondsuit(F)\bs (G\times G)(\ba)}\chi_\theta(\nu(g_2))^{-1} \xi_1(g_1)\xi_2(g_2) \int\limits_{(N_{\scw}^{\bullet}\cap P)(F)\bs N_{\scw}^{\bullet}(\ba)} \phi^{(s)}(u \cdot \iota(g_1,g_2))\psi_{\scw}^{\bullet}(u) \ du \ dg_1 \ dg_2.
\]
\end{Thm}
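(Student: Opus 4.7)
The plan is to follow the pattern of the Fourier-coefficient unfolding in Sect.~\ref{sec:unfolding of FC}, now augmented by an essential use of the cuspidality of $\pi$. For $\Re s\gg 0$, I substitute $E(\phi^{(s)})(g)=\sum_{\gamma\in P(F)\bs G^{\square,k}(F)}\phi^{(s)}(\gamma g)$ into $Z$; absolute convergence (moderate growth of the Eisenstein series against rapid decay of the cusp forms) lets me interchange sum and integral. Set $H:=\iota(G\times G)\cdot N_{\scw}^\bullet$; this is a subgroup of $G^{\square,k}$ because $\iota(G\times G)$ stabilizes $\psi_{\scw}^\bullet$ and hence normalizes $N_{\scw}^\bullet$. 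I then break the sum along the double coset decomposition $P(F)\bs G^{\square,k}(F)/H(F)$, using Lemma~\ref{lem:modular quasi} to absorb the modular quasicharacter from the $\iota$-image, rewriting
\[
Z=\sum_{\gamma}I_\gamma,\qquad I_\gamma=\int\limits_{(P(L)(F)\cap H(F))\bs H(\ba)}\chi_\theta(\nu(g_2))^{-1}\xi_1(g_1)\xi_2(g_2)\,\phi^{(s)}(\gamma u\iota(g_1,g_2))\psi_{\scw}^\bullet(u)\,dh,
\]
where $L=W^{\Delta,k}\gamma$ parametrizes each orbit and $h=u\iota(g_1,g_2)$.

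The bulk of the work is the elimination of all but one $I_\gamma$. First, by Proposition~\ref{prop:first vanishing criterion}, any $\gamma$ for which $\psi_{\scw}^\bullet|_{(N(L)\cap N_{\scw}^\bullet)(R)}$ is nontrivial contributes zero. For each surviving $\gamma$, the residual inner $u$-integration becomes, by Proposition~\ref{prop:first vanishing criterion}(3), a Fourier coefficient on $\GL_D(L)$ attached to the induced pair on the flag $0\subset L\cap Y_1\subset\cdots\subset L\cap Y_1^\perp$. By the analogue of Lemma~\ref{lem:get higher orbit}, if $L$ meets the $W^{\nabla,k}$-part of the flag in too large dimension, some composite $A_{i+k-1}\circ\cdots\circ A_i$ becomes nonzero on these intersections and the induced pair lies in an orbit strictly higher than $(k^n)_D$ in $\GL_{kn,D}$; then Definition~\ref{def:global (k,n)}(2) forces $I_\gamma=0$. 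Hence only ``generic'' $L$'s with respect to the flag stabilized by $N_{\scw}^\bullet$ may survive.

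Cuspidality of $\pi$ handles the residual non-open orbits. For each such $\gamma$ I expect to exhibit, inside $P(L)\cap H$, a unipotent subgroup of $G$ entering through the $\iota$-embedding along which $\psi_{\scw}^\bullet$ restricts trivially, so that the $(G\times G)$-integration factors through a constant-term integral of $\xi_1$ or $\xi_2$ along a proper parabolic of $G$, which vanishes by cuspidality. The only representative left is $\gamma=1$, with $L=W^{\Delta,k}$ and $P(L)=P$. Using the lemma preceding Lemma~\ref{lem:modular quasi}, one has $\iota(G\times G)(F)\cap P(F)=\iota(G^\diamondsuit)(F)$, and since $\iota(G\times G)$ normalizes $N_{\scw}^\bullet$, the domain $(P(F)\cap H(F))\bs H(\ba)$ factors by Fubini into $G^\diamondsuit(F)\bs (G\times G)(\ba)$ together with $(N_{\scw}^\bullet\cap P)(F)\bs N_{\scw}^\bullet(\ba)$, yielding exactly the right-hand side of the theorem.

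The main obstacle is the cuspidality step: after the two algebraic rounds of vanishing, one must enumerate the residual $P$-$H$ double cosets and, for each non-open one, locate an explicit unipotent subgroup of $G$ along which the cusp form has vanishing constant term. This is the twisted analogue of the Piatetski-Shapiro--Rallis cancellation in the classical doubling method, and it is precisely where the $(k,n)_D$-model theory for $\theta$ and the cuspidality of $\pi$ must be coordinated most carefully.
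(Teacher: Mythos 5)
Your outline follows the same overall route as the paper (unfold the Eisenstein sum, parametrize double cosets by the totally isotropic subspaces $L = W^{\Delta,k}\gamma$, apply Proposition~\ref{prop:first vanishing criterion}, then apply the higher-orbit vanishing for $\theta$, then use cuspidality of $\pi$ on the residual cosets), and you correctly identify the cuspidality step as the crux. However, there is a genuine gap there. You assert that exhibiting a unipotent subgroup $\iota(\{1\}\times N^-)\subset P(L)\cap H$ on which $\psi_{\scw}^\bullet$ restricts trivially is enough to factor the $(G\times G)$-integral through a constant-term integral of $\xi_2$. It is not. What is needed is the stronger statement that the \emph{inner integral} $J_\gamma(h)=\int_{[N_{\scw}^\bullet\cap\gamma^{-1}P\gamma]}\phi^{(s)}(\gamma u h)\psi_{\scw}^\bullet(u)\,du$ is genuinely \emph{left-invariant} under $\iota(\{1\}\times[N^-])$, and since $\phi^{(s)}$ is a vector-valued section (not a character), this invariance does not follow merely from $\psi_{\scw}^\bullet$ being trivial on $\iota(\{1\}\times N^-)$: the $\theta$-valued coefficient sitting inside $J_\gamma$ must itself be shown invariant. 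This is exactly Proposition~\ref{prop:invariance}, which is proved by a Fourier expansion along the abelian group $N(\mathrm{Ker}(A_1))$ together with the $(k,n)_D$ higher-orbit vanishing for $\theta$. Your proposal does not invoke any version of this mechanism, and without it the factoring into $\int_{[N^-]}\xi_2(ug_2)\,du$ does not go through.

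A secondary omission: you do not construct the invariant $\kappa(L)$ of Sect.~\ref{sec:invariant}, nor show that $\kappa^+(L)=\kappa^-(L)$ and that $\kappa(L)$ classifies the $\iota(G\times G)N_{\scw}^\bullet(F)$-orbits of the surviving $L$ (with $L\cap Y_{k-1}=\{0\}$). This classification is what makes ``the only representative left is $\gamma=1$'' precise, and it is also what supplies, for each $\kappa(L)>0$, the pair of proper parabolic subgroups $P^\pm$ of $G$ whose unipotent radicals $N^\pm$ are used in the invariance step. Without $\kappa(L)$ one has no handle on what the residual double cosets are nor where to find the unipotent group needed for cuspidality. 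To repair the proposal you would need to (i) verify that after the first two rounds the surviving $L$ lie in $\tilde\Omega(W^{\square,k})$, (ii) define and analyze $\kappa(L)$ and the flags $\bar L^\pm\subset\bar L'\,(\text{resp. }\bar L'')\subset W$, and (iii) show that the relevant restriction of the $(k,n)_D$-coefficient on $\GL_D(L)$ has $\bar L^-$ in the kernel of the first map, putting you exactly in the setting of Sect.~\ref{sec:invariance under stab} so that Proposition~\ref{prop:invariance} applies.
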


The proof of this theorem occupies the next section. We now explain that this integral is in fact an Euler product. As in Remark \ref{rem:eulerian fourier}, let
\[
f^{(s)}(g)=f_{\phi}^{(s)}(g):=\int\limits_{[N_{\scw}^{\bullet}\cap P]} \phi^{(s)}( u  g)\psi_{\scw}^{\bullet}(u) \ du.
\]
The pair $(N_{\scw}^{\bullet}\cap P,\psi_{\scw}^{\bullet})$ lies in the orbit $(k^n)_D$ and $\iota(G^{\diamondsuit})$ lies in the stabilizer of this pair. For fixed $g$, the function $u\mapsto \phi^{(s)}(ug)$ on $N_{\scw}^{\bullet}\cap P$ is an element of $\theta \cdot \nu^s$. Therefore, $f^{(s)}(g)$ is a $(k,n)_D$-Fourier coefficient of an element in $\theta \cdot \nu^s$.

Let $N_{\scw}^{\circ}=N_{\scw}^{\bullet}\cap N(W^{\nabla,k})$. Then for any $h\in G^{\square,k}(\ba)$,
\[
\begin{aligned}
 &\int\limits_{(N_{\scw}^{\bullet}\cap P)(F)\bs N_{\scw}^{\bullet}(\ba)} \phi^{(s)}( u \cdot  \iota(g_1,g_2)h)\psi_{\scw}^{\bullet}(u) \ du\\
 =&\int\limits_{N_{\scw}^\circ(\ba)} \ \int\limits_{[N_{\scw}^{\bullet}\cap P]} \phi^{(s)}(  u u' \cdot  \iota(g_1,g_2)h)\psi_{\scw}^{\bullet}( uu') \ du \ du'\\
 =&\int\limits_{N_{\scw}^\circ(\ba)}  f^{(s)}( u' \cdot  \iota(g_1,g_2)h)\psi_{\scw}^{\bullet}( u') \ du'\\
 =&\int\limits_{N_{\scw}^\circ(\ba)} f^{(s)}(\iota(g_2,g_2)\cdot u' \cdot  \iota(g_2^{-1}g_1,1)h)\psi_{\scw}^{\bullet}( u')  \ du'\\
  =&\chi_{\theta}(\nu(g_2))\int\limits_{N_{\scw}^\circ(\ba)} f^{(s)}(u \cdot  \iota(g_2^{-1}g_1,1)h)\psi_{\scw}^{\bullet}( u)  \ du.\\
 \end{aligned}
\]
Observe that we use change of variable and Lemma \ref{lem:modular quasi} in the third equality, and Lemma \ref{lem:invariance I global} in the last equality. As a consequence, we can write $Z(\xi_1\boxtimes \xi_2,\phi^{(s)})$ as
\[
\begin{aligned}
% &\int\limits_{G^\diamondsuit(F)\bs (G\times G)(\ba)}\chi_\tau(\nu(g_2))^{-1} \xi_1(g_1)\xi_2(g_2) \int\limits_{(N_{\scw}^{\bullet}\cap P)(F)\bs N_{\scw}^{\bullet}(\ba)} \phi^{(s)}( u \cdot  \iota(g_1,g_2))\psi_{\scw}^{\bullet}(u) \ du\ d g_1 \ d g_2\\
% =&\int\limits_{G^\diamondsuit(F)\bs (G\times G)(\ba)}\chi_\tau(\nu(g_2))^{-1} \xi_1(g_1)\xi_2(g_2) \int\limits_{N_{\scw}^\circ(\ba)} \ \int\limits_{[N_{\scw}^{\bullet}\cap P]} \phi^{(s)}( u' u  \cdot  \iota(g_1,g_2))\psi_{\scw}^{\bullet}(u' u)  \ du' \ du\ d g_1 \ d g_2\\
% =&\int\limits_{G^\diamondsuit(F)\bs (G\times G)(\ba)} \xi_1(g_1)\xi_2(g_2) \int\limits_{N_{\scw}^\circ(\ba)} \ \int\limits_{[N_{\scw}^{\bullet}\cap P]} \phi^{(s)}( u' u  \cdot  \iota(g_1,e))\psi_{\scw}^{\bullet}(u' u)  \ du' \ du \ d g_1 \ d g_2\\
 %=&\int\limits_{G(\ba)} \int\limits_{[G^\diamondsuit]} \xi_1(g_1g_2)\xi_2(g_2) \int\limits_{N_{\scw}^\circ(\ba)} \ \int\limits_{[N_{\scw}^{\bullet}\cap P]} \phi^{(s)}( u' u  \cdot  \iota(g_1,e))\psi_{\scw}^{\bullet}(u' u) \ du' \ du \ d g_1 \ d g_2\\
 &\int\limits_{G^{\diamondsuit}(F)\bs (G\times G)(\ba)}\xi_1(g_1)\xi_2(g_2)\int\limits_{N_{\scw}^\circ(\ba)} f^{(s)}(u \cdot  \iota(g_2^{-1}g_1,1))\psi_{\scw}^{\bullet}( u)  \ du \ dg_2 \ dg_1\\
 =&\int\limits_{G^{\diamondsuit}(F)\bs (G\times G)(\ba)}\xi_1(g_2g_1)\xi_2(g_2)\int\limits_{N_{\scw}^\circ(\ba)} f^{(s)}(u \cdot  \iota(g_1,1))\psi_{\scw}^{\bullet}( u)  \ du\ dg_2 \ dg_1\\
 =&\int\limits_{G(\ba)} \ \int\limits_{[G^{\diamondsuit}]}\xi_1(g_2g_1)\xi_2(g_2)\int\limits_{N_{\scw}^\circ(\ba)} f^{(s)}(u \cdot  \iota(g_1,1))\psi_{\scw}^{\bullet}( u)  \ du\ dg_2 \ dg_1\\
  =&\int\limits_{G(\ba)}\scp(\pi(g)\xi_1\boxtimes \xi_2) \int\limits_{N_{\scw}^\circ(\ba)} f^{(s)}(u  \cdot  \iota(g,1))\psi_{\scw}^{\bullet}( u)  \ du  \ d g.\\
 \end{aligned}
\]
For fixed $g$, $\phi^{(s)}(g)\mapsto f^{(s)}(g)$ is a global $(k,n)_D$-functional for $\phi^{(s)}(g)\in \theta\cdot \nu^s$. By Lemma \ref{lem:eulerian}, for decomposable data $\phi^{(s)}=\otimes_v'\phi_v^{(s)}$, $f^{(s)}(g)$ is also decomposable:
\[
f^{(s)}(g)=\prod_v f_v^{(s)}(g_v).
\]
If we assume further that $\xi_i=\otimes_v' \xi_{i,v}$ for $i=1, 2$, then
\[
Z(\xi_1\boxtimes \xi_2,\phi^{(s)})=\prod_v \int\limits_{G(F_v)}\scp_v(\pi(g_v)\xi_{1,v}\boxtimes \xi_{2,v}) \int\limits_{N_{\scw}^\circ(F_v)} f_v^{(s)}(u_v  \cdot  \iota(g_v,1))\psi_{\scw}^{\bullet}( u_v)  \ du_v  \ d g_v.
\]

\begin{Rem}
In \cite{CFGK19} and \cite{CFK}, the generalized Speh representation is used as $\theta$. In this case, the unramified calculation in these two papers shows that the global integral represents the tensor product $L$-function. One can in fact choose other representations of type $(k,n)_D$ as the inducing data. For instance, one can choose the isobaric sum of several generalized Speh representations. In this case, the global integral represents a product of tensor product $L$-functions.
\end{Rem}

\section{Unfolding}\label{sec:unfolding}

The goal in this section is to prove Theorem \ref{thm:statement of unfolding}.

 When $\Re s \gg 0$, the global integral becomes
\[
\begin{aligned}
&Z(\xi_1\boxtimes \xi_2,\phi^{(s)})\\
%=&\int\limits_{[G\times G]} \ \int\limits_{[N_{\scw}^{\bullet}]} \chi_\tau(\nu(g_2))^{-1} \xi_1(g_1)\xi_2(g_2) E(\phi^{(s)})(u \cdot \iota(g_1,g_2))\psi_{\scw}^{\bullet}(u) \ du \ dg_1 \ dg_2\\
%=&\int\limits_{[G\times G]} \ \int\limits_{[N_{\scw}^{\bullet}]} \chi_\tau(\nu(g_2))^{-1} \xi_1(g_1)\xi_2(g_2) \sum_{\gamma\in P(F)\bs G^{\square,k}(F)} \phi^{(s)}(\gamma u \cdot \iota(g_1,g_2))\psi_{\scw}^{\bullet}(u) \ du \ dg_1 \ dg_2\\
=&\int\limits_{[G\times G]}
\chi_\theta(\nu(g_2))^{-1} \xi_1(g_1)\xi_2(g_2) \ \int\limits_{[N_{\scw}^{\bullet}]} \sum_{\gamma\in P(F)\bs G^{\square,k}(F)} \phi^{(s)}(\gamma u \cdot \iota(g_1,g_2))\psi_{\scw}^{\bullet}(u) \ du \ dg_1 \ dg_2.\\
\end{aligned}
\]
We can rewrite the integral as a sum over $P(F)\bs G^{\square,k}(F)/\iota(G\times G)N_{\scw}^{\bullet}(F)$. Our goal is to show that, only the double coset $P(F)\iota(G\times G)N_{\scw}^{\bullet}(F)$ supports nonzero contribution.

We proceed in a slightly different way. We have
\[
\begin{aligned}
& \int\limits_{[N_{\scw}^{\bullet}]} \ \sum_{\gamma\in P(F)\bs G^{\square,k}(F)}  \phi^{(s)}(\gamma u \cdot \iota(g_1,g_2))\psi_{\scw}^{\bullet}(u) \ du\\
=&  \int\limits_{[N_{\scw}^{\bullet}]} \ \sum_{\gamma\in P(F)\bs G^{\square,k}(F)/N_{\scw}^{\bullet}(F)} \ \sum_{\gamma'\in  N_{\scw}^{\bullet}(F) \cap \gamma^{-1}P(F)\gamma\bs N_{\scw}^{\bullet}(F)}  \phi^{(s)}(\gamma \gamma' u \cdot \iota(g_1,g_2))\psi_{\scw}^{\bullet}(u) \ du\\
=& \sum_{\gamma\in P(F)\bs G^{\square,k}(F)/N_{\scw}^{\bullet}(F)} \ \int\limits_{(N_{\scw}^{\bullet}\cap \gamma^{-1}P\gamma)(F)\bs N_{\scw}^{\bullet}(\ba)} \phi^{(s)}(\gamma  u \cdot \iota(g_1,g_2))\psi_{\scw}^{\bullet}(u) \ du.
\end{aligned}
\]
For each $\gamma\in P(F)\bs G^{\square,k}(F)/N_{\scw}^{\bullet}(F)$ and $h\in G^{\square,k}(\ba)$, we write
\[
\begin{aligned}
I_\gamma(h)=& \int\limits_{(N_{\scw}^{\bullet}\cap \gamma^{-1}P\gamma)(F)\bs N_{\scw}^{\bullet}(\ba)} \phi^{(s)}(\gamma  u  h)\psi_{\scw}^{\bullet}(u) \ du,\\
J_\gamma(h)=& \int\limits_{[N_{\scw}^{\bullet}\cap \gamma^{-1}P\gamma]} \phi^{(s)}(\gamma  u h)\psi_{\scw}^{\bullet}(u) \ du.
\end{aligned}
\]
Thus
\[
I_\gamma(h)= \int\limits_{(N_{\scw}^{\bullet}\cap \gamma^{-1}P\gamma\bs N_{\scw}^{\bullet})(\ba)} J_\gamma(uh)\psi_{\scw}^{\bullet}(u) \ du.
%=\int\limits_{(N_{\scw}^{\bullet}\cap N(W^{\nabla,k}\gamma))(\ba)} J_\gamma(uh)\psi_{\scw}^{\bullet}(u) \ du.
\]

Recall that $P\bs G^{\square,k}$ can be identified with the variety $\Omega(W^{\square,k})$ of maximal totally isotropic subspaces of $W^{\square,k}$. The identification is given by $\gamma\mapsto  W^{\Delta,k}\gamma$. The group $\gamma^{-1} N(W^{\Delta,k}) \gamma$ is the unipotent radical of the parabolic subgroup $P(W^{\Delta,k}\gamma)$. Let $L= W^{\Delta,k}\gamma$.

%Notice that $\iota(G\times G) N_{\scw}^{\bullet}$ is a subgroup of $P_{\scw}^{\bullet}$. We have to define some finer invariants to describe the orbits of $P(W^{\Delta,k})\bs G^{\square,k} / \iota(G\times G) N_{\scw}^{\bullet}$.

We now look for $\gamma$ such that $I_{\gamma}\neq 0$.
We now give an outline of the proof.

\begin{enumerate}
%\item If the restriction of $\psi_{\scw}^{\bullet}$ to $ N(L)  \cap N_{\scw}^{\bullet}$ is nontrivial. Then $J_\gamma(h)$ contains
%\[
%\int\limits_{[N(L) \cap  N_{\scw}^{\bullet}]} \phi^{(s)}(\gamma u h) \psi_{\scw}^{\bullet}(u) \ du
%\]
%as an inner integral.
%If this is the case, $J_\gamma(g)=0$.
%\item We now assume that the restriction of $\psi_{\scw}^{\bullet}$ to $ N(L)  \cap N_{\scw}^{\bullet}$ is trivial. Thus, $\psi_{\scw}^{\bullet}$ descends to a character on $(N(L) \cap N_{\scw}^{\bullet})\bs (P(L)\cap N_{\scw}^{\bullet})$. This group may be viewed as a subgroup of $M(L)$.  The integral $W_\gamma(g)$ contains
  %  \[
%\int\limits_{[P(W^{\Delta,k}) \cap \gamma N_{\scw}^{\bullet}\gamma^{-1}]} \phi^{(s)}(u\gamma g) \psi_{\scw}^{\bullet}(\gamma^{-1} u \gamma) \ du
%=\ast \int\limits_{[M(W^{\Delta,k}) \cap \gamma N_{\scw}^{\bullet}\gamma^{-1}]} \phi^{(s)}(u\gamma g) \psi_{\scw}^{\bullet}(\gamma^{-1} u \gamma) \ du
%\]
%as an inner integral, where $\ast$ is the volume of $N(L)  \cap N_{\scw}^{\bullet}$.
 %    This is a Fourier coefficient of the generalized Speh representation $\theta(\tau,W^\Delta)$.  We show that if
\item We first proceed as in Sect. \ref{sec:unfolding of FC} and use the character $\psi_{\scw}^\bullet$ and the orbit of $\theta$ to eliminate some orbits. Our first conclusion is that the remaining orbits satisfy
\begin{equation}\label{eq:unfolding empty intersection}
L \cap (W_2^\nabla\oplus \cdots \oplus W_k^\nabla)=\{0\}.
\end{equation}
%then this coefficient is attached to orbit higher than $(k^n)$. Thus we can eliminate such orbits.
\item We now assume that \eqref{eq:unfolding empty intersection} holds. For the remaining $\gamma$, we consider the orbit under the action of $\iota(G\times G)(F)$. We define an invariant $\kappa(L)$ which is similar to \cite{PSR87} Sect. 2. This invariant characterizes the orbit.
\item We now proceed as in \cite{PSR87} to eliminate the case $\kappa(L)>0$. There remains only $P(F)\cdot\iota(G\times G)N_{\scw}^{\bullet}(F)$ with nonzero contribution.
\end{enumerate}

In the rest of the proof, we keep using the notation $Y_i,A_i$ etc. from Sect. \ref{sec:FC in global integral}.
\subsection{First steps}

We already know that if the restriction of $\psi_{\scw}^{\bullet}$ to $[N(L) \cap N_{\scw}^{\bullet}]$ is nontrivial, then $J_\gamma(g)=0$ and consequently $I_\gamma(g)=0$. Thus we can assume that the restriction of $\psi_{\scw}^{\bullet}$ to $[N(L) \cap N_{\scw}^{\bullet}]$ is trivial. By Proposition \ref{prop:first vanishing criterion}, this is equivalent to
\[
A_i(L\cap Y_i/L\cap Y_{i-1})\subset L\cap Y_{i+1}/L\cap Y_{i},\qquad i\geq 1.
\]
This induces a character on $(N(L) \cap N_{\scw}^{\bullet} )\bs (P(L) \cap N_{\scw}^{\bullet})$. This subgroup can be viewed as a subgroup of $M(L)$. It is the unipotent radical of the parabolic subgroup of $\GL_D(L)$ stabilizing
\[
0\subset L\cap Y_1\subset \cdots \subset L\cap Y_{k-1}\subset L\cap Y_{k-1}^\perp \subset \cdots \subset L\cap Y_1^{\perp}\subset L.
\]
Thus $J_\gamma(g)$ contains
\[
\int\limits_{[N(L) \cap N_{\scw}^{\bullet} \bs P(L)\cap N_{\scw}^{\bullet}]} \phi^{(s)}(\gamma u g))\psi_{\scw}^{\bullet}(u) \ du
   % =& \int\limits_{[M(L)\cap N_{\scw}^{\bullet}]} \phi^{(s)}(\gamma u g))\psi_{\scw}^{\bullet}(u) \ du\\
    %=&\int\limits_{[M(W^{\Delta,k}) \cap \gamma N_{\scw}^{\bullet}\gamma^{-1}]} \phi^{(s)}(u\gamma g) \psi_{\scw}^{\bullet}(\gamma^{-1} u \gamma) \ du
    %\end{aligned}
\]
as an inner integral.

\begin{Lem}
If $L\cap Y_{k-1} \neq \{0\}$, then the pair $(N(L) \cap N_{\scw}^{\bullet}\bs P(L) \cap N_{\scw}^{\bullet},\psi_{\scw}^{\bullet})$ lies in an orbit higher than $(k^n)_D$. Consequently, $J_{\gamma}(g)=0$.
\end{Lem}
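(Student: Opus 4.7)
The strategy is to produce a nonzero length-$k$ composition of the $\bar A_i$'s that arise from condition (3) of Proposition \ref{prop:first vanishing criterion}; by Definition \ref{def:orbit kn} this places the inner Fourier coefficient on $\GL_D(L)$ in an orbit higher than $(k^n)_D$, and since $\theta$ is of type $(k,n)_D$, that inner coefficient vanishes. This makes $J_\gamma(g)=0$ (and hence $I_\gamma(g)=0$). The argument is close in spirit to the proof of Lemma \ref{lem:get higher orbit}, but care is needed at the ``middle transition'' because the flag here is of a slightly different shape.

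Let $j\in\{1,\ldots,k-1\}$ be the smallest index with $L\cap Y_j\neq\{0\}$; it exists by hypothesis. Set $V_i:=(L\cap Y_i)/(L\cap Y_{i-1})\subset Y_i/Y_{i-1}$, so $V_j=L\cap Y_j\neq 0$ while $V_i=0$ for $i<j$. By Proposition \ref{prop:first vanishing criterion}, each $A_i$ restricts to a map $\bar A_i\colon V_i\to V_{i+1}$. It suffices to prove $\bar A_{j+k-1}\circ\cdots\circ\bar A_j\neq 0$, which is a composition of $k$ maps and is legitimate since $j\leq k-1$ implies $j+k-1\leq 2k-2$.

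The key inputs are the explicit descriptions of the $A_i$ recalled in Section \ref{sec:FC in global integral}. For $i\in\{1,\ldots,k-2\}\cup\{k+1,\ldots,2k-2\}$, $A_i$ is an isomorphism of the full quotient $Y_i/Y_{i-1}$ (identity on $W^\nabla$ for $i\leq k-2$, its $\rho$-dual, i.e.\ identity on $W^\Delta$, for $i\geq k+1$), so $\bar A_i$ is injective. The middle maps $A_{k-1}\colon W^\nabla\to W^\square$, $x^\nabla\mapsto(2x,0)$, and $A_k\colon W^\square\to W^\Delta$ are only injective, respectively surjective, but crucially their composition $A_k\circ A_{k-1}\colon x^\nabla\mapsto 2x^\Delta$ is an isomorphism. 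Starting with $V_j\neq 0$, injectivity propagates nonvanishing through $\bar A_j,\ldots,\bar A_{k-2}$ to give a nonzero image in $V_{k-1}$; then $\bar A_{k-1}$ injects this into $V_k$, with image contained in $A_{k-1}(W^\nabla)=W_{1,+}\subset W^\square$, on which $A_k$ is injective; hence the image in $V_{k+1}$ is nonzero; and the remaining injective $\bar A_{k+1},\ldots,\bar A_{j+k-1}$ keep it nonzero through $V_{j+k}$.

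The main technical point is precisely the transition through $A_{k-1}$ and $A_k$, since individually neither is an isomorphism; one must verify that what the chain of preceding isomorphisms delivers into $V_{k-1}$ is sent by $\bar A_{k-1}$ into the subspace $W_{1,+}$ where $A_k$ is injective. This is immediate from the formula $A_{k-1}(x^\nabla)=(2x,0)$, which forces the image to land in $W_{1,+}$ regardless of the subspace $V_{k-1}$ on which we restrict. Everything else is a routine diagram chase from condition (3) of Proposition \ref{prop:first vanishing criterion}, so once the middle transition is handled the nonvanishing of the $k$-fold composition follows and the lemma is proved.
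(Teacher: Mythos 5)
Your proof is correct and takes essentially the same route as the paper: the paper's own proof simply says it is identical to the proof of Lemma \ref{lem:get higher orbit} once one uses that $A_k\circ A_{k-1}$ is an isomorphism, which is exactly the ``middle transition'' you isolate and verify. You supply the details the paper leaves implicit — in particular, that the image of $\bar A_{k-1}$ lands in $W_{1,+}$, on which $A_k$ is injective, so the composite $\bar A_k\circ \bar A_{k-1}$ remains injective on $V_{k-1}$ even though neither factor is an isomorphism — and the bookkeeping of indices ($j\le k-1$ gives $j+k-1\le 2k-2$, so a length-$k$ composition fits inside the flag) is correct.
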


\begin{proof}
The proof is identical to the proof of Lemma \ref{lem:get higher orbit}. Note that we need to use the fact that $A_{k}\circ A_{k-1}$ is an isomorphism.
\end{proof}

\subsection{The invariant $\kappa(L)$}\label{sec:invariant}

Define
\[
\tilde\Omega(W^{\square,k})=\{L\in\Omega(W^{\square,k})\mid L\cap Y_{k-1}=\{0\}\}.
\]
%The reason is that the subspace $W_2^\nabla\oplus \cdots \oplus W_k^\nabla$ is invariant under $P_{\scw}^{\bullet}$.
This subset corresponds to the open subset $P\bs P\cdot P_{\scw}^{\bullet}\subset P\bs G^{\square,k}$. %If $L,L'\in\tilde\Omega(W^{\square,k})$, then there exists $p\in P_{\scw}^{\bullet}$ such that $L=L'p$.

\begin{Lem}\label{lem:dimension counting}
Let $L\in \tilde\Omega(W^{\square,k})$. Then
\[
\dim (L\cap Y_{k-i}^\perp)=i\cdot \dim W.
\]
\end{Lem}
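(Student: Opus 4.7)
The plan is to reduce the claim to elementary dimension counting, exploiting only two facts: that $L$ is maximally totally isotropic (so $L = L^\perp$ and $\dim L = k\dim W$), and that the hypothesis $L \cap Y_{k-1} = \{0\}$ propagates down the flag.

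First I would record that for $i \geq 1$ we have the inclusion $Y_{k-i} \subseteq Y_{k-1}$ by construction of the flag, so
\[
L \cap Y_{k-i} \subseteq L \cap Y_{k-1} = \{0\}.
\]
Next, I would use the non-degeneracy of $\langle\ ,\ \rangle^{\square,k}$ together with $L = L^\perp$ to take perpendiculars. The standard identity $(U \cap V)^\perp = U^\perp + V^\perp$ applied to $U = L$, $V = Y_{k-i}$ gives
\[
W^{\square,k} = \{0\}^\perp = (L \cap Y_{k-i})^\perp = L^\perp + Y_{k-i}^\perp = L + Y_{k-i}^\perp.
\]

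Finally I would apply the standard dimension formula $\dim(L + Y_{k-i}^\perp) = \dim L + \dim Y_{k-i}^\perp - \dim(L \cap Y_{k-i}^\perp)$. Using $\dim W^{\square,k} = 2k\dim W$, $\dim L = k\dim W$, and $\dim Y_{k-i}^\perp = (k+i)\dim W$, the formula rearranges to
\[
\dim(L \cap Y_{k-i}^\perp) = k\dim W + (k+i)\dim W - 2k\dim W = i\dim W,
\]
as required.

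There is no real obstacle here: the only subtlety is to confirm the range of $i$ (which is $1 \leq i \leq k$, with $i=k$ giving the consistency check $\dim(L \cap W^{\square,k}) = \dim L = k\dim W$) and to verify that the perpendicular identities go through over $D$ for our $\epsilon$-skew hermitian form, which is immediate from non-degeneracy.
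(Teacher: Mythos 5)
Your proof is correct, and it takes a genuinely different route from the paper for the non-trivial direction. The paper splits the argument into a lower bound and an upper bound: the lower bound $\dim(L\cap Y_{k-i}^\perp)\geq \dim L+\dim Y_{k-i}^\perp-\dim W^{\square,k}$ is the same inclusion–exclusion step you use, but for the upper bound the paper observes that $L\cap Y_{k-i}^\perp$ injects into $Y_{k-i}^\perp/Y_{k-i}$ (using $L\cap Y_{k-i}=\{0\}$) and that its image is a totally isotropic subspace of the nondegenerate quotient $Y_{k-i}^\perp/Y_{k-i}$ of dimension $2i\dim W$, hence of dimension at most $i\dim W$. You instead convert the condition $L\cap Y_{k-i}=\{0\}$ into the equality $L+Y_{k-i}^\perp=W^{\square,k}$ via $L=L^\perp$ and the identity $(U\cap V)^\perp=U^\perp+V^\perp$, and then the dimension formula produces the exact value in one pass. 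Both are sound; your route is a little slicker as a pure dimension count, while the paper's route has the side benefit that it simultaneously exhibits the image of $L\cap Y_{k-i}^\perp$ in $Y_{k-i}^\perp/Y_{k-i}$ as a \emph{maximal} totally isotropic subspace — an observation the paper immediately re-uses in the next paragraph (that $\bar L\subset W_1^\square$ is maximal isotropic). If you adopt your version, you would need to re-derive that maximality remark separately.
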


\begin{proof}
A simple dimension counting shows that
\[
\dim(L\cap Y_{k-i}^\perp)\geq \dim(L)+\dim(Y_{k-i}^\perp)-\dim(W^{\square,k})=i\cdot \dim(W).
\]
%Recall that if $\dim W_1 + \dim W_2 \geq \dim W$, then $\dim (W_1\cap W_2)\geq \dim W_1 + \dim W_2 -\dim W$ or
%\[
%\dim W - \dim W_1 \geq \dim W_2 - \dim (W_1\cap W_2).
%\]
%Proof is easy.
%\[
%W_2/(W_1\cap W_2)\to W/W_1
%\]
%is an injection. This proves the $\geq$ part.

We now note that $L\cap Y_{k-i}^\perp \to Y_{k-i}^\perp/ Y_{k-i}$
is an injection and can be viewed as a totally isotropic subspace of $Y_{k-i}^\perp/ Y_{k-i}$. This proves the reverse part.
\end{proof}

From now on, we assume that $L\in \tilde\Omega(W^{\square,k})$.
Define $\bar L$ to be the image of the injection
\[
L\cap Y_{k-1}^\perp\to Y_{k-1}^\perp/Y_{k-1}\to W_1^\square.
\]
Observe that $\bar L$ is a totally isotropic subspace of $W_1^{\square}$ and by Lemma \ref{lem:dimension counting}, it is maximal.
Let
\[
L^+= L\cap (W_{1,+}\oplus Y_{k-1}),\qquad L^- =L\cap (W_{1,-}\oplus Y_{k-1}).
\]
Let $\bar L^+$ be the image of the injection
\[
L^+\to (W_{1,+}\oplus Y_{k-1})/Y_{k-1}\to W_{1,+},
\]
and similar for $\bar L^-$. Observe that $\bar L^{\pm}=\bar L\cap W_{1,\pm}$.
Define
\[
\kappa^+(L)=\dim (\bar L^+ ),\qquad \kappa^-(L)=\dim (\bar L^- )
\]
Both $\kappa^+(L)$ and $\kappa^-(L)$ are less than or equal to $\dim W$. It is straightforward to verify that both of them are invariant under $\iota(G\times G) N_{\scw}^{\bullet}(F)$.

\begin{Lem}
We have that $\kappa^+(L)=\kappa^-(L)$.
\end{Lem}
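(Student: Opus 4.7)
The plan is to prove the equality via a linear algebra argument based on the fact that $\bar L$ is a maximal totally isotropic subspace of $W_1^{\square}$, together with the orthogonal decomposition $W_1^{\square} = W_{1,+}\perp W_{1,-}$ into two non-degenerate pieces (the restriction of the form on $W_1^\square$ to $W_{1,+}$ is $\langle\ ,\ \rangle$ and to $W_{1,-}$ is $-\langle\ ,\ \rangle$, with $W_{1,+}$ and $W_{1,-}$ mutually orthogonal inside $W_1^{\square}$). The strategy is to establish one inequality $\kappa^+(L)\leq \kappa^-(L)$ using the isotropy of $\bar L$, and then to invoke the symmetry between $+$ and $-$ to conclude.

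First I would note that Lemma \ref{lem:dimension counting} (applied to $i=1$) shows that the image of the injection $L\cap Y_{k-1}^\perp\to W_1^{\square}$ is of dimension $\dim W$, so $\bar L$ is a maximal totally isotropic subspace of $W_1^{\square}$. Let $\pi^+\colon W_1^{\square}\to W_{1,+}$ be the projection along $W_{1,-}$. Since $\ker(\pi^+|_{\bar L}) = \bar L\cap W_{1,-}=\bar L^-$, we obtain
\[
\dim \pi^+(\bar L) = \dim \bar L - \dim \bar L^- = \dim W - \kappa^-(L).
\]

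Next I would exploit the totally isotropic condition on $\bar L$. For $v\in \bar L^+$ (viewed as a pair $(v^+,0)$) and any $u=(u^+,u^-)\in \bar L$, the pairing in $W_1^\square$ gives
\[
0 = \langle v,u\rangle = \langle v^+,u^+\rangle - \langle 0,u^-\rangle = \langle v^+,u^+\rangle.
\]
Thus, under the identification $\bar L^+\subset W_{1,+}$, we have $\bar L^+\subset \pi^+(\bar L)^{\perp_{W_{1,+}}}$. Since the form on $W_{1,+}\cong W$ is non-degenerate, $\dim \pi^+(\bar L)^{\perp_{W_{1,+}}} = \dim W - \dim \pi^+(\bar L) = \kappa^-(L)$, and consequently $\kappa^+(L)\leq \kappa^-(L)$.

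The reverse inequality follows by interchanging the roles of $W_{1,+}$ and $W_{1,-}$ and running the same argument with $\pi^-$ in place of $\pi^+$, which yields $\kappa^-(L)\leq \kappa^+(L)$. Combining the two gives the desired equality. There is no serious obstacle here; the only thing to verify carefully is the clean identification of $\bar L^\pm$ with $\bar L\cap W_{1,\pm}$ (which is already recorded in the paper) and the correct description of the form on $W_1^{\square}$ restricted to the two summands, so that the isotropy of $\bar L$ translates into orthogonality inside $W_{1,+}$ (respectively $W_{1,-}$).
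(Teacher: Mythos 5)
Your proof is correct and takes essentially the same approach as the paper's (following \cite{PSR87} Lemma 2.1): you compute $\dim\pi^+(\bar L)=\dim W-\kappa^-(L)$ from the kernel $\bar L^-=\ker(\pi^+|_{\bar L})$, observe that isotropy of $\bar L$ forces $\bar L^+\subset \pi^+(\bar L)^{\perp}$ inside $W_{1,+}$, and conclude by a dimension count plus symmetry. The only difference is cosmetic: the paper derives the inequality by constructing an auxiliary complement $L_+$ dual to $\bar L^+$ and checking $\bar L'\cap L_+=\{0\}$, whereas you invoke the orthogonal-complement dimension formula directly, which is a slight streamlining of the same step.
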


\begin{proof}
The proof is essentially the same as \cite{PSR87} Lemma 2.1.
Consider the projection
\[
\pi^+: W_1^\square  \to W_{1,+},\qquad
\pi^-: W_1^\square  \to W_{1,-}.
\]
Then
\[
\bar L^{\mp}=\mathrm{Ker}(\pi^{\pm}|_{\bar L}).
\]
Let $\bar L'=\pi^+(\bar L)$ and $\bar L''=\pi^-(\bar L)$. Then
\[
\dim W=\dim \bar L=\dim \bar L'+\dim \bar L^-=\dim\bar  L''+\dim\bar  L^+.
\]

On the other hand, $\bar L^+\subset \bar L'$. We note that $\la \bar L^+,\bar L^{-}\ra^{\square}\subset \la W_{1,+},W_{1-}\ra^{\square}=0$, and $\la\bar  L^+,\bar L\ra^{\square}\subset \la \bar L,\bar L\ra^\square=0$. This implies that $\la\bar L^+,\bar L'\ra^{\square}=0$ as well.  %We now consider the bilinear form on $W_1^\square$.
One can also check that $\bar L^+$ is the kernel of the form $\la \ , \ \ra$ restricted to $\bar L'$. In fact, if $x\in \bar L'$ satisfies $\la x,\bar L'\ra=0$, then $\la x,\bar L'+\bar L^-\ra^{\square}=\la x,\bar L\ra^{\square}=0$. This implies that $x\in \bar L$ and consequently $x\in \bar L^+$.

Since the form is non-degenerate on $W_{1,+}$, there must be a subspace $L_+\subset W_{1,+}$, of the same dimension as $\bar L^+$, which pairs non-degenerately with $\bar L^+$.  It is easy to check that $\bar L'\cap L_+=\{0\}$ and thus $\bar L'\oplus L_+\subset W_{1,+}$. This implies that
\[
\dim \bar L'+\dim L_+\leq \dim W.
\]
Thus we conclude that  $\dim \bar L^+\leq \dim \bar L^-$. The other direction can be proved similarly. Thus $\dim \bar  L^+=\dim\bar  L^-$.
\end{proof}

From now on,  we  simply write $\kappa(L)=\kappa^+(L)=\kappa^-(L)$.

\begin{Lem}
Let $L,M\in \tilde\Omega(W^{\square,k})$.
If $\kappa(L)=\kappa(M)$, then there exists $g\in \iota(G\times G)N_{\scw}^{\bullet}(F)$ such that $Lg=M$.
\end{Lem}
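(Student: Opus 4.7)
The plan is to reduce the claim, in two steps, to the transitivity result for the classical doubling integral given in \cite{PSR87} Sect.~2.

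First, I would apply Lemma~\ref{lem:orbit under para and nil}(2) to the flag $\scy\colon 0\subset Y_1\subset\cdots\subset Y_{k-1}$ used throughout this section. Since $Y_{k-1}$ is not maximally totally isotropic in $W^{\square,k}$, Case~1 of Sect.~\ref{sec:FC of classical} applies, and in the lemma's convention the next term in the flag is $Y_k=Y_{k-1}^\perp$. For $L,L'\in\tilde\Omega(W^{\square,k})$ the intersections $L\cap Y_i$ and $L'\cap Y_i$ vanish for all $i\le k-1$, so the lemma's conditions for $i<k$ hold trivially, and the condition at $i=k$ reduces to $\bar L=\bar L'$ as subsets of $Y_{k-1}^\perp/Y_{k-1}=W_1^\square$. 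Hence $N_{\scw}^{\bullet}(F)$-orbits in $\tilde\Omega(W^{\square,k})$ are parameterized exactly by the maximal totally isotropic subspaces $\bar L\in\Omega(W_1^\square)$.

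Second, it remains to find $(g_1,g_2)\in G\times G$ with $\iota(g_1,g_2)\cdot\bar L=\bar M$. The element $\iota(g_1,g_2)$ stabilizes both $Y_{k-1}$ and $Y_{k-1}^\perp$, and its induced action on $W_1^\square=W_{1,+}\oplus W_{1,-}$ is the classical doubling embedding: $g_1$ on $W_{1,+}$ and $g_2$ on $W_{1,-}$. The invariant $\kappa(L)=\dim(\bar L\cap W_{1,+})$ depends only on $\bar L$, and by the classical argument of \cite{PSR87}~Sect.~2 (a direct application of Witt's extension theorem for the group $G\times G$), the $G\times G$-orbits on $\Omega(W_1^\square)$ are indexed bijectively by $\kappa$. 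Combining the two steps, when $\kappa(L)=\kappa(M)$ we obtain $(g_1,g_2)\in G\times G$ with $\iota(g_1,g_2)\cdot\bar L=\bar M$, and then Step~1 supplies some $u\in N_{\scw}^{\bullet}(F)$ with $L\cdot\iota(g_1,g_2)u=M$.

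The essential subtle point is identifying which convention of Lemma~\ref{lem:orbit under para and nil}(2) applies: adopting Case~1 (so $Y_k=Y_{k-1}^\perp$) makes the $i=k$ condition test equality only of images in $W_1^\square$ rather than in the larger space $W^{\square,k}/Y_{k-1}$. This is precisely what lets the unipotent step produce the coarse invariant $\bar L$, after which the classical PSR87 doubling analysis applies directly.
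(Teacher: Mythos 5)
Your proof is correct and uses the same two ingredients as the paper's: Lemma~\ref{lem:orbit under para and nil}(2) to reduce matters to the invariant $\bar{L}\subset W_1^{\square}$, and the transitivity result of \cite{PSR87} Sect.~2 (Witt extension) for the $G\times G$-orbits on $\Omega(W_1^{\square})$. The paper applies these in the opposite order (first use PSR87 to arrange $\bar L=\bar M$, then invoke Lemma~\ref{lem:orbit under para and nil}(2)), but the two presentations are logically equivalent.
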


\begin{proof}
Let $\bar \iota:G\times G\to G(\scw_1^{\square})$ defined as in the original doubling method. This is the same as $\iota$ when $k=1$. From \cite{PSR87} Sect. 2 and 4, we know that there exist $g\in G(F)\times G(F)$ such that $\bar L \cdot \bar \iota(g)=\bar M$.

We can replace $L$ by $L\cdot \iota(g)$ and without loss of generality, we assume $\bar L=\bar M$. This implies that
\[
L\cap Y_{k-1}^{\perp}=M\cap Y_{k-1}^{\perp}.
\]
We already know that $L\cap Y_{i}=M\cap Y_{i}=\{0\}$ for $i=1,\cdots, k-1$. By Lemma \ref{lem:orbit under para and nil}, there exists $u\in N_{\scw}^{\bullet}(F)$ such that $Lu=M$. This completes the proof.
\end{proof}

Given $L\in \tilde\Omega(W^{\square,k})$, we now construct a slightly different one which is more convenient for future consideration. Recall that $\bar L\subset W_1^\square$. We set
\[
\tilde{L}=\bar L\oplus W_2^\Delta \oplus \cdots \oplus W_k^\Delta.
\]
It is straightforward to check that $\tilde{L}\in \tilde\Omega(W^{\square,k})$, and $\kappa(\tilde{L})=\kappa(L)$.

We now assume that $\kappa(L)>0$. Let $P^+$ be the parabolic subgroup of $G$ preserving the flag $\bar L^+\subset \bar L'\subset W$ and $P^-$ be the parabolic subgroup of $G$ preserving the flag $\bar  L^-\subset \bar  L''\subset W$. Since $\kappa(L)>0$, these are proper parabolic subgroups. Let $N^{\pm}$ be the unipotent radical of $P^{\pm}$ so that $N^+\times N^-$ is the unipotent radical of the proper parabolic subgroup $P^+\times P^-$ of $G\times G$.
The results in \cite{PSR87} show that $\bar \iota(N^+\times N^-)$ lies in the stabilizer of $\bar L$ in $\bar \iota(G\times G)$. It is now easy to check that $\iota(N^+\times N^-)$ lies in the stabilizer of $\tilde L$ (but not necessarily $L$) in $\iota(G\times G)$. Consequently, $\iota(N^+\times N^-)$ is in the stabilizer of $(P(\tilde L)\cap N_{\scw}^{\bullet},\psi_{\scw}^{\bullet})$.

\subsection{The case $\kappa(L)>0$}
We now have shown that, when $\Re s \gg 0$,  the global integral $Z(\xi_1\boxtimes \xi_2,\phi^{(s)})$ equals
\begin{equation}\label{eq:formula 1 last step}
\int\limits_{[G\times G]}\chi_\theta(\nu(g_2))^{-1} \xi_1(g_1)\xi_2(g_2)\sum_{\gamma \in P(F)\bs P(F)\cdot P_{\scw}^{\bullet}(F)/N_{\scw}^{\bullet}(F)} I_\gamma(\iota(g_1,g_2)) \ d g_1 \ d g_2.
\end{equation}

We now exchange the sum and integration in \eqref{eq:formula 1 last step}. This shows that \eqref{eq:formula 1 last step} equals
\[
\sum_{\gamma \in P(F)\bs P(F)\cdot P_{\scw}^{\bullet}(F)/\iota(G\times G)N_{\scw}^{\bullet}(F)} K_\gamma,
\]
where
\[
\begin{aligned}
K_\gamma=& \sum_{\gamma'\in \iota^{-1}(\gamma^{-1}P(F)\gamma\cap \iota(G\times G)(F))\bs (G\times G)(F)}\ \int\limits_{[G\times G]}\chi_\theta(\nu(g_2))^{-1} \xi_1(g_1)\xi_2(g_2) I_{\gamma\gamma'}(\iota(g_1,g_2))\ d g_1 \ d g_2\\
=&\int\limits_{\iota^{-1}(P(L)\cap \iota(G\times G))(F)\bs (G\times G)(\ba)}\chi_\theta(\nu(g_2))^{-1} \xi_1(g_1)\xi_2(g_2) I_\gamma(\iota(g_1,g_2))\ d g_1 \ d g_2.\\
\end{aligned}
\]
The outer sum is indeed indexed by the invariant $\kappa(L)$. To calculate $K_\gamma$,
we can choose $L$ as in the previous section so that $\iota(N^+\times N^-)\subset P(L)$.

\begin{Lem}
If $L\cap Y_{k-1} = \{0\}$,
then  $N(L)\cap N_{\scw}^{\bullet}=\{1\}$.
\end{Lem}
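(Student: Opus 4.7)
The plan is to exploit the two pieces of information we have on $u \in N(L) \cap N_{\scw}^{\bullet}$: that $u-1$ lands in $L$ (since $L$ is the maximal isotropic defining the Siegel parabolic $P(L)$), and that $u-1$ drops each step in the self-dual flag
\[
0 \subset Y_1 \subset \cdots \subset Y_{k-1} \subset Y_{k-1}^\perp \subset \cdots \subset Y_1^\perp \subset W^{\square,k}
\]
(since $u$ lies in the unipotent radical of the parabolic stabilizing this filtration, and in particular acts trivially on $Y_{k-1}^\perp/Y_{k-1}$ by the very definition of $N_{\scw}^{\bullet}$). Both conditions combine to give $(u-1)(Z_i)\subset L\cap Z_{i-1}$ for each term $Z_i$ of the enlarged flag.

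Propagating the hypothesis $L \cap Y_{k-1}=\{0\}$ through the lower half of the flag is the first key step: since $Y_i \subset Y_{k-1}$ for $i \leq k-1$, we get $L\cap Y_i=\{0\}$, and therefore $(u-1)(Y_i)=0$ for all $i\leq k-1$. The step $(u-1)(Y_{k-1}^\perp)\subset Y_{k-1}$ combined with $(u-1)(Y_{k-1}^\perp)\subset L$ gives $(u-1)|_{Y_{k-1}^\perp}=0$, i.e. $u$ is the identity on the whole lower half.

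The second key step — the only nontrivial ingredient — is to upgrade this to the statement that $u-1$ vanishes on all of $W^{\square,k}$. For this we invoke the fact that $u$ preserves the form $\la\,,\,\ra^{\square,k}$: expanding $\la ux, uy\ra^{\square,k}=\la x,y\ra^{\square,k}$ shows $\la (u-1)x,y\ra^{\square,k}=0$ for every $x\in W^{\square,k}$ and every $y\in Y_{k-1}^\perp$. Hence $(u-1)(W^{\square,k})\subset (Y_{k-1}^\perp)^\perp = Y_{k-1}$. Combined with $(u-1)(W^{\square,k})\subset L$, the hypothesis $L\cap Y_{k-1}=\{0\}$ forces $u-1\equiv 0$, so $u=1$.

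I do not anticipate a real obstacle; the only thing to be careful about is that the standard ``orthogonal complement" bookkeeping goes through uniformly over all five types $(D,\rho)$ of Sect.~\ref{sec:groups}, which is immediate because the form is assumed non-degenerate in each case and $Y_{k-1}$ is totally isotropic so that $(Y_{k-1}^\perp)^\perp=Y_{k-1}$. No combinatorial analysis of $N_{\scw}^{\bullet}$ is needed beyond these two filtration properties.
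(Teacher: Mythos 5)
Your proof is correct and fills in the step the paper leaves implicit: the paper simply asserts $(u-1)(W^{\square,k}) \subset L\cap Y_{k-1}$, while you supply the justification, namely that $L\cap Y_{k-1}=\{0\}$ forces $u$ to fix $Y_{k-1}^\perp$ pointwise, and form-preservation then gives $\mathrm{im}(u-1)\subset (Y_{k-1}^\perp)^\perp = Y_{k-1}$. This is the natural (and, as far as I can tell, intended) route; your intermediate observation that $(u-1)$ already vanishes on the $Y_i$ for $i\le k-1$ is subsumed in the vanishing on $Y_{k-1}^\perp$ and could be omitted, but it does no harm.
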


\begin{proof}
Let $u\in N(L)\cap N_{\scw}^{\bullet}$. Then  $(u-1)(W^{\square,k}) \subset L\cap (W_2^\nabla \oplus \cdots \oplus W_k^\nabla)$. This implies that $u-1=0$.
\end{proof}

Thus we can view $P(L)\cap N_{\scw}^{\bullet}$ as a subgroup of $M(L)\simeq \GL_D(L)$. %, or alternatively $P(W^{\Delta,k})\cap \gamma^{-1}N_{\scw}^{\bullet}\gamma$ as a subgroup of $M(W^{\Delta,k})\simeq \GL(W^{\Delta,k})$.

We next show that the Fourier coefficient for $\GL_D(L)$  that appears in $I_{\gamma}(\iota(g_1,g_2))$ is of the form that is considered in Sect. \ref{sec:invariance under stab}. % The representation is a conjugation of $\theta(\tau,W^\Delta)$.
We use Proposition \ref{prop:invariance} to show that $I_\gamma(\iota(g_1,g_2))$ is left-invariant under the subgroup $\iota(\{1\}\times [N^-])$. Let $n_0$ be the Witt index of $\scw$. Recall that $\kappa(L)\leq n_0$.
% The calculation is done in $L$ as the stabilizer would be a subgroup of $\iota(G\times G)$.

\begin{Lem}
For $\iota(1,g_2)\in \iota([N^+ \times N^-])\subset [\iota(G\times G)\cap P(L)]$ and $h\in G^{\square,k}(\ba)$,
\[
J_\gamma(\iota(1,g_2)h)=J_\gamma(h).
\]
\end{Lem}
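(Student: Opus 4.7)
The plan is to recognize $J_\gamma$ as a value of a $\GL_D(L)$-Fourier coefficient of a form in $\theta\cdot \nu^s$ of the type treated in Sect.~\ref{sec:invariance under stab}, and then to apply Proposition~\ref{prop:invariance}. First I would rewrite $J_\gamma$ as follows. Since $N(L)\cap N_{\scw}^\bullet=\{1\}$, the group $P(L)\cap N_{\scw}^\bullet$ projects injectively into $M(L)\simeq \GL_D(L)$ onto the unipotent radical $N(\scy')$ of the parabolic stabilizing the flag $\scy':\ 0\subset Z_1\subset \cdots \subset Z_{k-1}\subset L$, where $Z_j:=L\cap Y_{k-1+j}$; by Lemma~\ref{lem:dimension counting}, $\dim Z_j=jn$. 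Conjugating by $\gamma$ identifies $M(L)$ with $M$, and the transformation law of $\tilde\phi^{(s)}$ along $P=MN(W^{\Delta,k})$, together with the left $N(W^{\Delta,k})$-invariance of $\tilde\phi^{(s)}$ (absorbing $N$-parts) and Lemma~\ref{lem:modular quasi} (for the modular character, as everything relevant is unipotent), would give
\[
J_\gamma(h)=\pi_h^{N(\scy'),\psi_{\sca'}}(1),\qquad J_\gamma(\iota(1,g_2)h)=\pi_h^{N(\scy'),\psi_{\sca'}}(m^*_L),
\]
where $\pi_h:=\tilde\phi^{(s)}(\gamma h)\in\theta\cdot \nu^s$ (viewed on $\GL_D(L)(\ba)$) and $m^*_L\in M$ is the Levi image of $\gamma\iota(1,g_2)\gamma^{-1}$, corresponding to $\iota(1,g_2)|_L\in \GL_D(L)$ under the above identification.

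Next I would identify the character $\psi_{\sca'}$. With $L=\tilde L=\bar L\oplus W_2^\Delta\oplus \cdots \oplus W_k^\Delta$, the successive quotients are $Z_1=\bar L$ and $Z_j/Z_{j-1}\simeq W_j^\Delta$ for $j\ge 2$. The restricted maps $\bar A_j\colon Z_j/Z_{j-1}\to Z_{j+1}/Z_j$ arise from $A_{k-1+j}$: by the duality in Sect.~\ref{sec:FC of classical}, $A_{k+1},\ldots,A_{2k-2}$ are essentially identity on $W^\Delta$, so $\bar A_j$ is an isomorphism for $j\ge 2$; while $\bar A_1$ is the restriction of $A_k\colon (y_+,y_-)\mapsto 2y_+^\Delta$ to $\bar L$, with kernel $\bar L^-$ of dimension $\kappa(L)$ and rank $a=n-\kappa(L)\in(0,n)$ (using $\kappa(L)\le n/2$). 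This places us in the setup of Sect.~\ref{sec:invariance under stab}, and Proposition~\ref{prop:invariance} then gives left-invariance of $\pi_h^{N(\scy'),\psi_{\sca'}}$ under $S_{\sca'}(F\bs\ba)=[N(\bar L^-)]\times \{1\}\times \cdots \times \{1\}$.

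Finally I would check that $m^*_L\in S_{\sca'}$. It acts trivially on $W_j^\Delta$ for $j\ge 2$. On $\bar L$, the formula $(\iota(1,g_2)-1)(y_+,y_-)=(0,y_-(g_2-1))$, combined with $\iota(1,g_2)(\bar L)=\bar L$, forces $(\iota(1,g_2)-1)(\bar L)\subset \bar L\cap (0,W_{1,-})=\bar L^-$; and $g_2$ fixing $\bar L^-$ pointwise (as $\bar L^-$ is the first step of the flag defining $N^-$) gives $(\iota(1,g_2)-1)|_{\bar L^-}=0$. Hence $\iota(1,g_2)|_{\bar L}\in N(\bar L^-)$, and $m^*_L\in S_{\sca'}$. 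Applying Proposition~\ref{prop:invariance},
\[
J_\gamma(\iota(1,g_2)h)=\pi_h^{N(\scy'),\psi_{\sca'}}(m^*_L)=\pi_h^{N(\scy'),\psi_{\sca'}}(1)=J_\gamma(h).
\]

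The main obstacle will be the bookkeeping in the first step: carefully verifying that the character $\psi_{\scw}^\bullet$ restricted to $P(L)\cap N_{\scw}^\bullet\hookrightarrow \GL_D(L)$ matches $\psi_{\sca'}$ with the rank profile computed in the second step, and handling the Levi decompositions along both $P$ and $P(L)$ together with the $N$-left-invariance used to discard the unipotent parts.
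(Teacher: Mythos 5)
Your proposal is correct and takes essentially the same route as the paper: identify the Fourier coefficient hiding in $J_\gamma$ as a $\GL_D(L)$-coefficient along the flag $Z_j=L\cap Y_{k-1+j}$ of the type treated in Sect.~\ref{sec:invariance under stab} (with $\bar A_j$ an isomorphism for $j\ge 2$ and $\mathrm{Ker}(\bar A_1)=\bar L^-$), and then invoke Proposition~\ref{prop:invariance}. The only difference is one of explicitness: you verify the exact rank $n-\kappa(L)$ of $\bar A_1$, and directly compute that the Levi image $m^*_L$ of $\iota(1,g_2)$ lies in $S_{\sca'}=N(\bar L^-)\times\{1\}\times\cdots\times\{1\}$, whereas the paper asserts these points tersely (giving only the lower bound $n-n_0$ for the rank and stating $S_\sca=\iota(\{1\}\times N^-)$ without the pointwise check).
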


\begin{proof}
 We show that the Fourier coefficient that is used in $J_\gamma$ is the one considered in Sect. \ref{sec:invariance under stab}. The group $P(L)\cap N_{\scw}^{\bullet}$ stabilizes both $L$ and the flag
\[
0\subset Y_1 \subset \cdots \subset Y_{k-1} \subset Y_{k-1}^{\perp}\subset \cdots \subset Y_1^{\perp}\subset W^{\square,k}.
\]
Since  $L\cap Y_{k-1}=\{0\}$, this implies that it stabilizes
\[
0\subset L\cap Y_{k-1}^\perp \subset \cdots \subset L\cap  Y_1^\perp\subset L.
\]
By Lemma \ref{lem:dimension counting}, $\dim(L\cap Y_{k-i}^\perp)=i\cdot \dim W$.
We now describe the character. Note that
\[
L\cap Y_{i-1}^{\perp}/L\cap Y_{i}^{\perp}\cong Y_{i-1}^{\perp}/Y_i^{\perp},\qquad i=2, \cdots, k-2.
\]
Thus the map
\[
L\cap Y_{i}^{\perp}/L\cap Y_{i+1}^{\perp}\to L\cap Y_{i-1}^{\perp}/L\cap Y_{i}^{\perp},\qquad i=k-2,\cdots, 1
\]
is simply the same as $Y_{i}^{\perp}/Y_{i+1}^{\perp}\to Y_{i-1}^{\perp}/Y_{i}^{\perp}$ and is again an isomorphism.

It remains to determine the map
\begin{equation}\label{eq:restriction of the first map}
L\cap Y_{k-1}^{\perp}/L\cap Y_{k-1}\to L\cap Y_{k-2}^{\perp}/L\cap Y_{k-1}^{\perp}
\end{equation}
which comes from
$A_{k}:Y_{k-1}^{\perp}/Y_{k-1}\to Y_{k-2}^{\perp}/Y_{k-1}^{\perp}$ via restriction.
Recall that $A_k$ is translated from
\[
W_{1,+}\oplus W_{1,-}\to W^\Delta,\qquad (x,y)\mapsto 2x^\Delta.
\]
Therefore, $\bar L^-$ is in the kernel of \eqref{eq:restriction of the first map}, and $S_{\sca}$ in Proposition \ref{prop:invariance} is $\iota(\{1\}\times N^-)$. The rank of this map is at least $n-n_0>0$.  By Proposition \ref{prop:invariance}, the coefficient $J_\gamma(h)$ is invariant under $\iota(\{1\}\times [N^-])$.
%In any case, the kernel is $L^-$. By our definition $N^-$ is inside the group which can produce invariance. The image is $L'$, so $N^+$ also satisfies the condition? As long as it is in the stabilizer. We also choose $\tilde L$ nicely to make it happen.
%Note that $\nu(\gamma \cdot \iota(e,g_2)\gamma^{-1})=\nu(\iota(e,g_2))$ and  $\nu(g_2)=1$ for $g_2\in N^-$.
\end{proof}

\begin{Prop}\label{prop:invariance property of integral}
For $\iota(1,g_2)\in \iota(\{1\}\times [N^-])\subset [\iota(G\times G)\cap P(L)]$ and $h\in G^{\square,k}(\ba)$,
\[
I_\gamma(\iota(1,g_2)h)=I_\gamma(h).
\]
\end{Prop}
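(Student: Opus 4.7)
The strategy is to bootstrap from the preceding lemma—which gives $J_\gamma(\iota(1,g_2)\cdot x) = J_\gamma(x)$ for every $x \in G^{\square,k}(\ba)$—to the desired statement for $I_\gamma$, by exploiting the factorization
\[
I_\gamma(h) = \int\limits_{(N_{\scw}^{\bullet}\cap \gamma^{-1}P\gamma\,\bs\, N_{\scw}^{\bullet})(\ba)} J_\gamma(uh)\,\psi_{\scw}^{\bullet}(u)\,du
\]
recorded at the start of the section. Write $N = N_{\scw}^{\bullet}$, $H = N \cap \gamma^{-1}P\gamma$, and $g = \iota(1,g_2)$ with $g_2 \in N^-(\ba)$. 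The plan is to substitute $u = g u' g^{-1}$ inside the outer integral in order to pull $g$ to the immediate left of $J_\gamma$ and invoke the lemma.

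For the substitution to make sense I would first verify three structural facts. Since $\iota(G\times G)$ stabilizes the flag \eqref{eq:flag in twisted doubling} defining $P_{\scw}^{\bullet}$, it lies in the Levi $M_{\scw}^{\bullet}$, so $g$ normalizes $N$ and fixes the character $\psi_{\scw}^{\bullet}$. Since $g \in \iota(G\times G)\cap P(L) \subset \gamma^{-1}P\gamma$ and parabolic subgroups are self-normalizing, $g$ normalizes $\gamma^{-1}P\gamma$ and hence also $H$. Finally, the Jacobian of conjugation by $g$ on $(H\bs N)(\ba)$ is trivial: the element $g_2$ is adelic unipotent, so $\mathrm{Ad}(g)$ acts unipotently on both $\mathrm{Lie}(N)$ and $\mathrm{Lie}(H)$ (and Lemma \ref{lem:modular quasi} independently gives triviality on $N$).

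With these in hand, substituting $u = g u' g^{-1}$ yields
\[
I_\gamma(gh) = \int\limits_{(H\bs N)(\ba)} J_\gamma(u g h)\,\psi_{\scw}^{\bullet}(u)\,du = \int\limits_{(H\bs N)(\ba)} J_\gamma(g u' h)\,\psi_{\scw}^{\bullet}(u')\,du',
\]
where the second equality uses the $g$-invariance of $\psi_{\scw}^{\bullet}$ and the unit Jacobian. The preceding lemma then gives $J_\gamma(g u' h) = J_\gamma(u' h)$, so the right-hand side equals $I_\gamma(h)$.

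The only genuine point of care is the validity of the adelic change of variable on $(H\bs N)(\ba)$, which is entirely controlled by the three structural facts above. No additional machinery beyond what has already been developed in Sects. \ref{sec:invariance under stab} and \ref{sec:invariant} should be required, and I do not foresee any other obstacle.
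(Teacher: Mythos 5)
Your proof is correct and takes essentially the same route as the paper's: both reduce $I_\gamma(\iota(1,g_2)h)$ to $I_\gamma(h)$ by combining the left-invariance of $J_\gamma$ under $\iota(\{1\}\times[N^-])$ with a conjugation change of variable on $(H\bs N)(\ba)$, using the fixedness of $\psi_{\scw}^{\bullet}$ and the triviality of the modulus (Lemma \ref{lem:modular quasi}). The only cosmetic difference is the order of operations — the paper inserts $\iota(1,g_2)\iota(1,g_2^{-1})$, peels off the leading $\iota(1,g_2)$ via the preceding lemma, and then changes variable; you change variable first and then peel off $\iota(1,g_2)$ — and you spell out the normalization facts a bit more explicitly, but the argument is the same.
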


\begin{proof}
We have
\[
\begin{aligned}
I_\gamma(\iota(1,g_2)h)= & \int\limits_{(N_{\scw}^{\bullet}\cap \gamma^{-1}P\gamma\bs N_{\scw}^{\bullet})(\ba)}J_\gamma(u\cdot \iota(1,g_2)h)\psi_{\scw}^{\bullet}(u) \ du\\
=& \int\limits_{(N_{\scw}^{\bullet}\cap \gamma^{-1}P\gamma\bs N_{\scw}^{\bullet})(\ba)}J_\gamma(\iota(1,g_2)\iota(1,g_2^{-1})u\cdot \iota(1,g_2)h)\psi_{\scw}^{\bullet}(u) \ du\\
=& \int\limits_{(N_{\scw}^{\bullet}\cap \gamma^{-1}P\gamma\bs N_{\scw}^{\bullet})(\ba)}J_\gamma(\iota(1,g_2^{-1})u\cdot \iota(1,g_2)h)\psi_{\scw}^{\bullet}(u) \ du\\
=& \int\limits_{(N_{\scw}^{\bullet}\cap \gamma^{-1}P\gamma\bs N_{\scw}^{\bullet})(\ba)}J_\gamma(uh)\psi_{\scw}^{\bullet}(u) \ du=I_\gamma(h).\\
\end{aligned}
\]
The last step is obtained by a change of variables. Note that $\iota(G\times G)\cap P(L)$ is in the stabilizer of both $N_{\scw}^{\bullet}\cap \gamma^{-1}P\gamma\bs N_{\scw}^{\bullet}$ and $\psi_{\scw}^{\bullet}$, and by Lemma \ref{lem:modular quasi}, the modular quasicharacter incurred $\delta_{\iota(G\times G)\cap P(L):N_{\scw}^{\bullet}}(\iota(\{1\}\times N^-))=1$.
\end{proof}

Suppose $\kappa(L)>0$, then $N^-$ is nontrivial. By Proposition \ref{prop:invariance property of integral}, the integral $K_\gamma$ contains the following inner integral
\[
\int\limits_{[N^-]}\xi_2(ug_2) \ du.
\]
As $\xi_2$ is a cusp form, this implies that $K_\gamma=0$.

\subsection{Finishing the calculation}
We are now ready to finish the unfolding process. We only have consider $L\in \Omega(W^{\square,k})$ such that $L\cap Y_{k-1}=\{0\}$. For such subspaces, one can define an invariant $\kappa(L)$. This invariant characterizes the $\iota(G\times G)N_{\scw}^{\bullet}(F)$-orbit of such $L$.

It remains to consider the case $\kappa(L)=0$. In this case, we can simply choose $L=W^{\Delta,k}$ or $\gamma=1$ to calculate the contribution. Therefore, we have shown that $Z(\xi_1\boxtimes \xi_2,\phi^{(s)})=K_1$. Note that $\iota(G\times G)\cap P=\iota(G^{\diamondsuit})$ and we indeed have
\[
Z(\xi_1\boxtimes \xi_2,\phi^{(s)})= \int\limits_{G^\diamondsuit(F)\bs (G\times G)(\ba)}\chi_\theta(\nu(g_2))^{-1} \xi_1(g_1)\xi_2(g_2) I_1(\iota(g_1,g_2))\ d g_1 \ d g_2.
\]
This completes the proof.

\section{Some local results}\label{sec:local results}
%\begin{Rem}
%The case $\varphi=0$ is required here, at least locally. Not sure if we need it globally. However, in the above proof, we do use the fact that the form is non-degenerate (in $\kappa(L)$). So it is better to use type (e) to translate it to $\GL$. Also we have to define everything from scratch..
%\end{Rem}

In this section, we state some local results whose proofs are analogous to those global ones. These results are necessary ingredients in order to develop the local theory of the twisted doubling integrals. In this section, we assume that $F$ is a non-Archimedean local field. The Archimedean case requires different tools and will be treated in a follow-up article.

The local analogue of unfolding uses the Geometric Lemma of \cite{BZ77}. This requires that the number of double cosets involved is finite. However, for the cases we treat in this paper, the number of double cosets is infinite. To give a careful treatment, we use the localization principle of Bernstein.

Notation: for an algebraic variety $X$ over $F$, we usually write $X$ for the set of rational points $X(F)$.

%\begin{Prop}
%We have
%\[
%J_{N_{\scw}^{\heartsuit},\psi_{A}}(\Ind_{P(W^{\Delta,k})}^{G^{\square,k}} \pi ) \simeq  J_{N_{(k,W^\Delta)},\psi_{(k,W^\Delta)}}(\pi).
%\]
%\end{Prop}

%Thus, if $\pi$ has a unique $(k,W^\Delta)$-model, then $\Ind_{P(W^{\Delta,k})}^{G^{\square,k}} \pi$ has a unique $(N_{\scw}^{\heartsuit},\psi_{A})$-model.

\subsection{Preliminaries on distributions}

We first recall some necessary tools in the theory of distributions. The main reference is \cite{BZ76}. We also use \cite{BZ77} and \cite{Bump97} Sect. 4.3.

\begin{Def}
We call a topological space an $l$-space if it is Hausdorff, locally compact, and totally disconnected. We call a topological group $G$ an $l$-group if it is an $l$-space.
%there is a fundamental system of neighborhoods of the unit element $e$ consisting of open compact subgroups.
\end{Def}

We always assume that an $l$-group $G$ is countable at infinity, that is, $G$ is a union of countably many compact subsets.

For an $l$-space $X$, let $C^\infty(X)$ be the space of locally constant functions on $X$ and $\crs(X)$ be the space of locally constant, compactly supported functions on $X$. Let $\crc$ be the sheaf of rings of locally constant functions on $X$.

\begin{Def}
By an $l$-sheaf $\scf$ on an $l$-space $X$ we mean an arbitrary sheaf of modules over the sheaf of rings $\crc$.
\end{Def}

For an $l$-space $X$, let $\Sh(X)$ denotes the category of $l$-sheaves on $X$. For an $l$-group $G$, let $\Alg(G)$ denotes the category of smooth representations of $G$.

Let $\scf$ be an $l$-sheaf on $X$. Let $\scf(X)$ denote the space of sections of $\scf$ over $X$ and by $\scf_c(X)$ the subspace of sections with a compact support. Both are modules over the ring $C^{\infty}(X)$ and hence over $\crs(X)$.

\begin{Prop}[\cite{BZ76} 1.14]
The functor $\scf\mapsto \scf_c(X)$ is an equivalence of the category $\Sh(X)$ with the category of all $\crs(X)$-modules $M$ satisfying the condition $\crs(X)\cdot M=M$.
\end{Prop}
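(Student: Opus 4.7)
The plan is to adopt the standard strategy for equivalences between sheaves on a totally disconnected base and modules over an appropriate non-unital ring of functions. The key structural fact I would use throughout is that, since $X$ is an $l$-space, the compact open subsets form a basis of the topology and every compact open $U$ gives a genuine idempotent $\mathbf{1}_U \in \crs(X)$, with $\mathbf{1}_U \mathbf{1}_V = \mathbf{1}_{U \cap V}$ and $\mathbf{1}_{U \sqcup V} = \mathbf{1}_U + \mathbf{1}_V$ for disjoint unions.

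First I would construct the candidate inverse functor $M \mapsto \scf_M$. Given a non-degenerate $\crs(X)$-module $M$, define a presheaf on the basis of compact open sets by $\scf_M(U) := \mathbf{1}_U \cdot M$, with restriction map $\scf_M(U) \to \scf_M(V)$ for $V \subset U$ given by multiplication by $\mathbf{1}_V$. The natural $\crs(X)$-action on $\scf_M(U)$ factors through $\crs(U) = \mathbf{1}_U \crs(X) \mathbf{1}_U$, so each $\scf_M(U)$ carries the required module structure. I would then extend $\scf_M$ to arbitrary open sets by the standard limit construction, and verify it is an $l$-sheaf.

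Next I would check the sheaf axioms on the basis. Separation is immediate: if $m \in \mathbf{1}_U M$ satisfies $\mathbf{1}_{U_i} m = 0$ for all $i$ in a cover $\{U_i\}$ of $U$, then refining to a finite disjoint refinement $U = \bigsqcup_{j=1}^r V_j$ (possible since $U$ is compact open and $X$ is $l$-space) gives $m = \mathbf{1}_U m = \sum_j \mathbf{1}_{V_j} m = 0$. For gluing, given compatible $m_i \in \mathbf{1}_{U_i} M$ on a finite disjoint refinement, the element $m := \sum_j \mathbf{1}_{V_j} m_{i(j)}$ works, and one checks well-definedness from compatibility. The general case reduces to finite disjoint refinements by a partition-of-unity argument using the idempotents.

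Finally I would verify the two functors are mutually inverse. For $M \mapsto \scf_M \mapsto (\scf_M)_c(X)$, the compactly supported sections are $\varinjlim_U \mathbf{1}_U M = \crs(X)\cdot M = M$, where the last equality uses the non-degeneracy hypothesis; this is the step where the condition $\crs(X)\cdot M = M$ is essential. For $\scf \mapsto \scf_c(X) \mapsto \scf_{\scf_c(X)}$, the natural comparison map sends $\mathbf{1}_U \cdot s \in \mathbf{1}_U \scf_c(X)$ to the restriction of $s$ to $U$; surjectivity on each $\mathbf{1}_U \scf_c(X)$ is automatic since any section over compact open $U$ extends by zero to a compactly supported global section, and injectivity follows from the separation axiom for $\scf$. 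The main obstacle I expect is the bookkeeping for the gluing axiom on non-disjoint covers, where one must use the idempotent calculus of $\crs(X)$ to replace an arbitrary compact open cover by a disjoint refinement; once this reduction is in place, everything else follows formally from the non-degeneracy of $M$.
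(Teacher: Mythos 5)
The paper does not prove this statement; it is Proposition 1.14 of Bernstein--Zelevinsky, cited as such, so there is no internal proof to compare against. Your sketch reproduces what is, to my understanding, the standard argument from loc.\ cit., and it is essentially correct: the crucial ingredients are (i) the idempotent calculus $\mathbf{1}_U \mathbf{1}_V = \mathbf{1}_{U\cap V}$ for compact opens, (ii) the fact that in an $l$-space every compact open admits, for any finite open cover, a finite refinement into a disjoint union of compact opens, which reduces the sheaf axiom on the basis to the orthogonal-idempotent decomposition $\mathbf{1}_U M = \bigoplus_j \mathbf{1}_{V_j} M$, and (iii) the identification $\bigcup_U \mathbf{1}_U M = \crs(X)\cdot M$ as a directed union, which is exactly where non-degeneracy $\crs(X)\cdot M = M$ enters. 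Your two comparison maps are the right ones, and your justifications for injectivity and surjectivity of $\mathbf{1}_U \scf_c(X) \to \scf(U)$ via extension by zero and the separation axiom are sound (note that $U$ compact open is closed, so ``support contained in $U$'' is meaningful).

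Two small points of bookkeeping you leave implicit and should be spelled out if this were to be a full proof: first, the extension of $\scf_M$ from the basis of compact opens to arbitrary open $V$ should be made explicit as $\scf_M(V) = \varprojlim_{U\subset V\ \mathrm{cpt\ open}} \mathbf{1}_U M$, and one must verify this yields an $l$-sheaf of $\crc$-modules (the $\crs(U) \simeq \mathbf{1}_U\crs(X)$-module structure on $\mathbf{1}_U M$ handles this on the basis). Second, you do not address naturality of the two comparison isomorphisms, i.e.\ that $M \mapsto \scf_M$ and $\scf \mapsto \scf_c(X)$ are mutually quasi-inverse \emph{functors}, not just mutually inverse on objects; this is routine but required for ``equivalence of categories.'' Neither is a genuine gap --- both are formal --- but a complete write-up should include them.
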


We now recall the notion of distributions.

\begin{Def}
A distribution $\scd$ on an $l$-sheaf $\scf$ is a linear functional on $\scf_c(X)$. Let $\fd(X,\scf)$ denote the space of distributions on $\scf$.
\end{Def}

Let $Z$ be a closed subset of $X$. The restriction of the sheaf $\scf$ to $Z$ is denoted $\rres_Z(\scf)$. Let $\scf_c(Z)=(\rres_Z(\scf))_c(Z)$. For $x\in X$, let $\scf_x$ denotes the stalk of $\scf$ at $x$. Notice that $\scf_c(\{x\})=\scf_x$.

We now discuss induced representations in the language of $l$-sheaves. The following material can be found in \cite{BZ76} Sect. 2.21 -- 2.23. We first introduce the notion of $G$-sheaf.

\begin{Def}
We say that $\gamma:G\to \Aut(X,\scf)$ is an action of $G$ on a pair $(X,\scf)$ if the action of $G$ on $X$ is continuous and the representation of $G$ on $\scf_c(X)$ is smooth.
\end{Def}

\begin{Def}
Fix a continuous action $\gamma_0$ of $G$ on $X$. Let $\Sh(X,G)$ be the category of $G$-sheaves on $X$. An object of $\Sh(X,G)$ is an $l$-sheaf $\scf\in \Sh(X)$ with an action $\gamma$ of $G$ on $(X,\scf)$ such that the restriction of $\gamma$ on $X$ is $\gamma_0$. By morphisms in $\Sh(X,G)$ we mean $G$-equivariant morphisms of sheaves on $X$.
\end{Def}

The representation of $G$ on $\scf_c(X)$ induces an action of $G$ on $\fd(X,\scf)$, which is denoted by $g\cdot \scd$ for $g\in G$.

Assume that the action $\gamma_0$ of $G$ on $X$ is transitive. As $G$ is countable at infinity then $X$ is homeomorphic to the quotient space $H\bs G$, where $H$ is the stabilizer of some point $x\in X$ (\cite{BZ76} 1.5). The restriction gives a functor
\begin{equation}\label{eq:restriction G equivariant}
\Sh(X,G)\to \Sh(\{x\},H)=\Alg(H).
\end{equation}

Conversely, let $(\pi,V)$ be a smooth representation of $H$. The space of compact induction  $\ind_H^G(\pi)$ is a $\crs(H\bs G)$-module, and therefore corresponds to a $G$-sheaf $\scf^\pi$. This gives a functor $\Alg(H)\to \Sh(X,G)$. By \cite{BZ76} 2.23, this functor is an inverse to \eqref{eq:restriction G equivariant} and $\Sh(X,G)$ is equivalent to $\Alg(H)$ as categories.

\begin{Rem}\label{rem:stalks}
Let $x'=xg\in X$ for some $g\in G$. Then the stalk $\scf_{x'}$ is the representation $g^{-1}(\pi)$ of $g^{-1}Hg$.
%The inducing data depends on the choice of base point (and of course on the homeomorphism $X\simeq H\bs G$).
\end{Rem}

Let $Q$ be a closed subgroup of $G$ and $Z$ be a closed $Q$-invariant subset of $X$. Assume that the action of $Q$ on $Z$ is transitive. Then $Z$ is of the form $H\bs HgQ$ for some $g\in G$. Thus $Z$ is homeomorphic to $H\bs HgQ\simeq g^{-1}Hg\cap Q\bs Q$. We would like to consider the composition of the following functors
\begin{equation}\label{eq:composition}
\Alg(H)\xrightarrow{\ind} \Sh(X,G)\xrightarrow{\res} \Sh(Z,Q)\xrightarrow{\mathrm{sec}} \Alg(Q).
\end{equation}
Clearly, the resulting representation is an induced representation on $Q$ with inducing data on $g^{-1}Hg\cap Q\bs Q$. It suffices to know the inducing data.

%By choosing a base point, one can identify $X\simeq H\bs G$.

%We use $x$ as the base point and identify $H\bs HxQ\simeq x^{-1}Hx\cap Q\bs Q$.

\begin{Prop}\label{prop:composition of functors}
The resulting representation in \eqref{eq:composition} is $\ind_{g^{-1}Hg\cap Q}^Q (g^{-1}(\pi))$.
\end{Prop}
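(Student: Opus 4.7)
The plan is to reduce the statement to a stalk computation via the equivalence of categories $\Sh(Z,Q) \simeq \Alg(g^{-1}Hg \cap Q)$ described immediately before the proposition. Under this equivalence, a $Q$-sheaf on $Z$ is recovered from its stalk at any chosen basepoint, considered as a smooth representation of the $Q$-stabilizer of that point. So it suffices to compute one such stalk and then apply the inverse equivalence (compact induction) to $Q$.

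I would take $x' = xg \in Z$ as the basepoint; its $Q$-stabilizer equals $g^{-1}Hg \cap Q$, which is exactly the group on which we expect the inducing data to live. The restriction functor $\res_Z \colon \Sh(X,G) \to \Sh(Z,Q)$ preserves stalks at points of $Z$, because stalks are a local invariant and $Z$ carries the subspace topology; moreover the $Q$-action on the stalk at $x'$ is by construction the restriction of the ambient $G$-action. Hence
\[
(\res_Z \scf^\pi)_{x'} = \scf^\pi_{x'}
\]
as vector spaces, and the $(g^{-1}Hg \cap Q)$-action on the left-hand side is the restriction of the $g^{-1}Hg$-action on the right-hand side. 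By Remark \ref{rem:stalks}, the latter stalk is $g^{-1}(\pi)$ as a representation of $g^{-1}Hg$. Restricting the action to the subgroup $g^{-1}Hg \cap Q$ therefore produces the desired inducing datum.

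Applying the inverse of the equivalence (namely the compact induction functor from $\Alg(g^{-1}Hg\cap Q)$ to $\Sh(Z,Q)$, followed by taking sections to land in $\Alg(Q)$) to this inducing datum yields $\ind_{g^{-1}Hg \cap Q}^Q(g^{-1}(\pi))$, which is the assertion of the proposition. The only point requiring care is the compatibility of the two instances of the equivalence (one on $(X,G)$, one on $(Z,Q)$) with the restriction $\res_Z$; but this amounts to unwinding the construction of $\Sh(X,G) \simeq \Alg(H)$ in \cite{BZ76} 2.23 together with the locality of stalks on the closed subset $Z$, so no genuine obstacle remains.
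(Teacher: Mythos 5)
Your proposal is correct and follows essentially the same route as the paper: reduce to computing the stalk at $x' = xg$, use the fact that restriction preserves stalks, and invoke Remark~\ref{rem:stalks} to identify that stalk as $g^{-1}(\pi)$. You spell out a bit more explicitly why the $Q$-stabilizer of $x'$ is $g^{-1}Hg\cap Q$ and why the two equivalences are compatible, but the substance of the argument is identical.
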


\begin{proof}
It suffices to check the inducing data, or equivalently the functor
\[
\Alg(H)\xrightarrow{\ind} \Sh(X,G)\xrightarrow{\res} \Sh(Z,Q) \xrightarrow{\res}\Sh(\{xg\}).
\]
The second restriction functor is simply taking the stalk. The restriction functor from $X$ to $Z$ preserves stalks and therefore by Remark \ref{rem:stalks}, the inducing data is $\scf_{xg}=g^{-1}(\pi)$.
\end{proof}

Finally, we recall the localization principle of Bernstein (\cite{Bernstein84} Sect. 1.4). The version we use here is stated in \cite{Bump97} Proposition 4.3.15.

\begin{Thm}\label{thm:localization principle}
Let $X$ and $Y$ be $l$-spaces, and let $p:X\to Y$ be a continuous map. Let $\scf$ be an $l$-sheaf on $X$. Suppose that $G$ is a group acting on $(X,\scf)$. Assume that the action satisfies $p(xg)=p(x)$ for $g\in G, x\in X$. Let $\chi$ be a character of $G$.

Assume that there are no nonzero distributions $\scd$ in $\fd(p^{-1}(y),\scf_{p^{-1}(y)})$ that
\begin{equation}\label{eq:equi condition}
g\cdot \scd=\chi(g)\scd,\qquad (g\in G)
\end{equation}
for any $y\in Y$. Then there are no nonzero distributions in $\mathfrak{D}(X,\scf)$ satisfying \eqref{eq:equi condition}.
\end{Thm}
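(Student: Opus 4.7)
The plan is to prove the contrapositive: given a nonzero distribution $\scd\in\fd(X,\scf)$ satisfying \eqref{eq:equi condition}, I will produce a point $y_0\in Y$ and a nonzero element of $\fd(p^{-1}(y_0),\scf_{p^{-1}(y_0)})$ satisfying the same equivariance condition.

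Two structural inputs are needed. First, a module structure: $\fd(X,\scf)$ is naturally a $C^\infty(X)$-module via $(\phi\cdot\scd)(s)=\scd(\phi s)$ for $s\in\scf_c(X)$. The hypothesis $p(xg)=p(x)$ forces every pulled-back function $p^*f$ (for $f\in C^\infty(Y)$) to be $G$-invariant, so the subspace $\fd^{G,\chi}(X,\scf)$ of $(G,\chi)$-equivariant distributions is stable under multiplication by $p^*f$ and thus acquires a $C^\infty(Y)$-module structure. Second, a restriction sequence: since $Y$ is Hausdorff, $\{y\}$ is closed and $p^{-1}(y)$ is closed in $X$; the general formalism of \cite{BZ76} yields the short exact sequence
\[
0\to \scf_c(X\setminus p^{-1}(y))\to \scf_c(X)\to \scf_c(p^{-1}(y))\to 0
\]
with extension-by-zero on the left and restriction on the right. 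Dualizing gives an injection $\fd(p^{-1}(y),\scf_{p^{-1}(y)})\hookrightarrow \fd(X,\scf)$ whose image is the subspace of distributions vanishing on $\scf_c(X\setminus p^{-1}(y))$; this injection is $G$-equivariant.

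Given a nonzero $\scd\in\fd^{G,\chi}(X,\scf)$, I pick $x_0\in\mathrm{supp}(\scd)$ and set $y_0=p(x_0)$. For a decreasing basis $\{U_n\}$ of open neighborhoods of $y_0$ in $Y$, I choose $\phi_n\in C^\infty(Y)$ with $\phi_n(y_0)=1$ and $\mathrm{supp}(\phi_n)\subset U_n$. The products $p^*\phi_n\cdot\scd$ are then $(G,\chi)$-equivariant distributions, all nonzero by the choice of $x_0$, whose supports shrink toward $p^{-1}(y_0)$. Passing to the quotient of $\fd^{G,\chi}(X,\scf)$ by the action of the maximal ideal $\{f\in C^\infty(Y):f(y_0)=0\}$ yields, via the restriction sequence, a nonzero element of $\fd^{G,\chi}(p^{-1}(y_0),\scf_{p^{-1}(y_0)})$, contradicting the hypothesis.

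The main obstacle is justifying this final quotient step, which amounts to a form of Nakayama's lemma over the non-Noetherian ring $C^\infty(Y)$. The cleanest route is to organize the fiber distributions into an $l$-sheaf on $Y$ (using the equivalence of \cite{BZ76} 1.14 between $l$-sheaves and $\crs(Y)$-modules) whose global sections recover $\fd^{G,\chi}(X,\scf)$; the hypothesis then asserts that every stalk of this sheaf is zero, forcing the sheaf itself, and hence its global sections, to vanish. This is the essence of the sheaf-theoretic proof of Bernstein's localization principle.
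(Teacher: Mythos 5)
First, note that the paper does not prove Theorem~\ref{thm:localization principle}: it is stated as a cited result (Bernstein 1984, Sect.\ 1.4; Bump, Prop.\ 4.3.15), so there is no in-paper proof to compare with. Judged on its own merits, your argument has a genuine gap at the decisive step. You organize the equivariant distributions into a sheaf on $Y$ and propose to read off the theorem from the vanishing of stalks, but the stalk at $y_0$ of the $l$-sheaf built from the $\crs(Y)$-module $\fd^{G,\chi}(X,\scf)$ is the space of \emph{germs} of equivariant distributions at $y_0$, i.e.\ $\fd^{G,\chi}(X,\scf)$ modulo those $\scd$ with $p^*\mathbf{1}_V\cdot\scd=0$ for some compact open $V\ni y_0$. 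This is not $\fd^{G,\chi}(p^{-1}(y_0),\scf_{p^{-1}(y_0)})$, which under your restriction sequence is the subspace of distributions annihilating $\scf_c(X\setminus p^{-1}(y_0))$. A distribution can have nonzero germ at $y_0$ while each truncation $p^*\phi_n\cdot\scd$ still fails to kill $\scf_c(p^{-1}(V_n)\setminus p^{-1}(y_0))$, so none of the truncations, nor any quotient of them, descends to the fiber. Already with $X=Y=\bz_p$, $p=\mathrm{id}$, $G$ trivial, $\scf$ constant, a distribution such as $f\mapsto\sum_{n\ge 1}2^{-n}\bigl(f(p^n)-f(0)\bigr)$ has nonzero germ at $0$ lying outside the one-dimensional image of $\fd(\{0\})$. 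Thus the claim that the hypothesis forces every stalk to vanish is unjustified, and the cutoff/Nakayama variant fails for the same reason.

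The repair is to dualize \emph{before} sheafifying: apply BZ76 1.14 to the $\crs(Y)$-module $\scf_c(X)_{G,\chi}$ of $(G,\chi)$-coinvariants of sections, not to the distribution space. This module is non-degenerate over $\crs(Y)$ (every section has compact image in $Y$), unlike $\fd^{G,\chi}(X,\scf)$. Since $(\cdot)_{G,\chi}$ is a cokernel and the stalk is a filtered colimit, the two functors commute; combined with the identity $\scf_c(p^{-1}(y_0))=\varinjlim_V\scf_c(p^{-1}(V))$ (restriction over shrinking compact open $V\ni y_0$), the stalk at $y_0$ of the coinvariant sheaf is exactly $\scf_c(p^{-1}(y_0))_{G,\chi}$. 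The hypothesis is the vanishing of the full linear dual of each fiber coinvariant space (up to replacing $\chi$ by $\chi^{-1}$ depending on conventions), hence the fiber coinvariants themselves vanish, hence all stalks vanish, hence $\scf_c(X)_{G,\chi}=0$, and therefore $\fd^{G,\chi}(X,\scf)=0$. Your overall architecture (localize with cutoffs, pass to a sheaf on $Y$, read the statement off the stalks) is the right one, but it must be applied to the predual: localization to a fiber is a direct limit, which is compatible with taking coinvariants but not with taking duals, which is precisely why the sheaf has to be built from sections rather than from distributions.
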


\subsection{Results}

From now on, $D$ can be any of the five types given in Sect. \ref{sec:groups}. The representation $\theta$ is assumed to be of type $(k,n)_D$.

\begin{Prop}\label{prop:unfolding FC local}
We use the setup as in Sect. \ref{sec:FC of Eisenstein series}. In particular, we exclude the odd orthogonal case. Then
\[
\dim\Hom_{N_{\scw}^{\heartsuit}}(I(s,\theta),\psi_\scw^{\heartsuit})=1.
\]
\end{Prop}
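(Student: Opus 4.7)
The plan is to carry out the local analogue of the global unfolding in Theorem \ref{thm:unfolding of fourier coefficient} using Bernstein's localization principle (Theorem \ref{thm:localization principle}). I would identify $\Hom_{N_{\scw}^{\heartsuit}}(I(s,\theta), \psi_\scw^{\heartsuit})$ with the space of $(N_{\scw}^{\heartsuit}, \psi_\scw^{\heartsuit})$-equivariant distributions on the $G^{\square,k}$-sheaf $\scf$ on $X = P(F) \backslash G^{\square,k}(F) \cong \Omega(W^{\square,k})$ whose stalk at the identity coset is $\theta \cdot \nu^s$. By Lemma \ref{lem:orbit under para and nil}(2), the $N_{\scw}^{\heartsuit}$-orbits on $X$ are parametrized by the continuous map $p\colon L \mapsto (L \cap Y_i/L \cap Y_{i-1})_i$ taking values in a closed subvariety of $\prod_i \mathrm{Gr}(Y_i/Y_{i-1})(F)$, so the fibers of $p$ are precisely the $N_{\scw}^{\heartsuit}$-orbits. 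This is what allows the localization principle to be brought to bear on an orbit space that is genuinely infinite.

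First I would stratify $X = U \sqcup Z$, where $U = \{L : L \cap W^{\nabla,k} = \{0\}\}$ is open (by upper semicontinuity of $L \mapsto \dim(L \cap W^{\nabla,k})$) and consists of a single $N_{\scw}^{\heartsuit}$-orbit represented by $L = W^{\Delta,k}$, and $Z = X \setminus U$. The short exact sequence
\[
0 \to \fd(U, \scf|_U) \to \fd(X, \scf) \to \fd(Z, \scf|_Z)
\]
reduces the claim to two assertions: (a) there are no nonzero equivariant distributions supported on $Z$, and (b) the space of equivariant distributions on $U$ is one-dimensional.

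For (a), I would apply Theorem \ref{thm:localization principle} to $p|_Z$ and, using Proposition \ref{prop:composition of functors}, identify the sections on each single orbit indexed by $\gamma$ with $L = W^{\Delta,k}\gamma$ as $\ind_{P(L) \cap N_{\scw}^{\heartsuit}}^{N_{\scw}^{\heartsuit}} \gamma^{-1}(\theta \cdot \nu^s)$. Frobenius reciprocity then reduces the fiber calculation to
\[
\Hom_{P(L) \cap N_{\scw}^{\heartsuit}}\bigl(\gamma^{-1}(\theta \cdot \nu^s), \psi_\scw^{\heartsuit}\bigr).
\]
This vanishes whenever $\psi_\scw^{\heartsuit}$ is nontrivial on $N(L) \cap N_{\scw}^{\heartsuit}$, since $\gamma^{-1}(\theta)$ acts trivially on this subgroup of the unipotent radical of $P(L)$. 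In the remaining case, Proposition \ref{prop:first vanishing criterion} allows $\psi_\scw^{\heartsuit}$ to descend to a character on $(N(L) \cap N_{\scw}^{\heartsuit}) \backslash (P(L) \cap N_{\scw}^{\heartsuit}) \subset \GL_D(L)$, and the verbatim argument of Lemma \ref{lem:get higher orbit} (using that $A_i$ is an isomorphism for $i = 1, \dots, 2k-1$ in the $\heartsuit$ setup and that $L \cap W^{\nabla,k} \neq \{0\}$ provides an initial nonzero $L \cap Y_i$) produces a sequence of $k$ injections showing the induced pair lies in an orbit higher than $(k^n)_D$. Vanishing then follows from Definition \ref{def:local (k,n)}(2).

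For (b), on the open orbit the same computation yields $\Hom_{P \cap N_{\scw}^{\heartsuit}}(\theta \cdot \nu^s, \psi_\scw^{\heartsuit})$, and as in Remark \ref{rem:eulerian fourier}, the pair $((N \cap N_{\scw}^{\heartsuit}) \backslash (P \cap N_{\scw}^{\heartsuit}), \psi_\scw^{\heartsuit})$ is exactly a Fourier datum in the orbit $(k^n)_D$ on $\GL_D(W^{\Delta,k})$, so Definition \ref{def:local (k,n)}(1) furnishes a one-dimensional Hom. The main obstacle I anticipate is technical bookkeeping in Step~3: verifying that the map $p$ and the stratification are genuinely compatible with the sheaf-theoretic setup of Sect.~10.1, and that Proposition \ref{prop:composition of functors} can be applied orbit-by-orbit uniformly in $s$. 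A secondary concern is to handle the dependence on $s$, which enters only through the character $\nu^s$ and does not affect any of the vanishing criteria, but does require one to track that the Hom dimensions are independent of $s$.
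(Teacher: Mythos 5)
You're following the same overall strategy as the paper (Bernstein's localization principle plus the orbit-wise vanishing criteria of Sect.~\ref{sec:contribution in unfolding}, then reading off the open-orbit contribution), and your analysis of the individual $N_{\scw}^{\heartsuit}$-orbits is correct. However, there is a genuine gap in how you invoke Theorem~\ref{thm:localization principle}: the map $p\colon L\mapsto (L\cap Y_i/L\cap Y_{i-1})_i$ is \emph{not} continuous on $Z$. The function $L\mapsto\dim(L\cap Y_i)$ is only upper semicontinuous, so the subquotients $L\cap Y_i/L\cap Y_{i-1}$ jump between Grassmannians of different ranks as $L$ ranges over $Z$; when the target is the disjoint union of Grassmannians, the map fails to be continuous (the locus where a given rank drops is not open). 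Since the localization principle requires a continuous $p$, the argument as you have set it up does not go through on all of $Z$ at once. The paper repairs exactly this by first passing through a finite filtration $I^{(d)}(s,\theta)\subset\cdots\subset I^{(0)}(s,\theta)$ indexed by $P_{\scw}^{\heartsuit}$-orbits, equivalently by Weyl double cosets $W(M)\backslash W(G^{\square,k})$; within a single cell $PwP_{\scw}^{\heartsuit}$ the intersection dimensions are constant by Lemma~\ref{lem:orbit under para and nil}(1), the quotient by $N_{\scw}^{\heartsuit}$ is the honest $l$-space $w^{-1}Pw\cap M_{\scw}^{\heartsuit}\backslash M_{\scw}^{\heartsuit}$, and the projection to it is continuous, so the localization principle applies \emph{cell by cell}. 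This intermediate stratification is precisely what is needed to make your orbit-by-orbit reduction rigorous.

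Two smaller points. Your exact sequence of distributions is reversed: since $Z$ is closed and $U$ is open, the correct sequence is $0 \to \fd(Z,\scf|_Z)\to\fd(X,\scf)\to\fd(U,\scf|_U)$, with the first arrow dual to restriction $\scf_c(X)\twoheadrightarrow\scf_c(Z)$ and the second dual to extension by zero $\scf_c(U)\hookrightarrow\scf_c(X)$. Moreover, (a) and (b) alone only yield the upper bound $\dim\leq 1$; to obtain equality one should invoke exactness of the twisted Jacquet functor, which turns $0\to\scf_c(U)\to\scf_c(X)\to\scf_c(Z)\to 0$ into a short exact sequence of Jacquet modules (hence of their duals), so that vanishing on $Z$ gives an isomorphism onto the open-cell contribution, as in the paper.
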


%This is the correct setup. No need to apply $(k,W^\Delta)$-functional first.
\begin{proof}
It is equivalent to show that the twisted Jacquet module $J_{N_{\scw}^{\heartsuit},\psi_{\scw}^{\heartsuit}}(I(s,\theta))$ is one-dimensional. As the number of orbit $P\bs G^{\square,k}/N_{\scw}^{\heartsuit}$ is infinite, one cannot apply the geometric lemma in \cite{BZ77} directly. One solution is to use the approach in \cite{Karel79}. The following proof is based the localization principle of Bernstein.

We write $P=M\cdot N=M(W^{\Delta,k})\cdot N(W^{\Delta,k})$. Given $g\in G^{\square,k}$, let $i_g=\ind_{N_{\scw}^{\heartsuit} \cap g^{-1}Pg}^{N_{\scw}^{\heartsuit}}(g^{-1}(\theta))$. If $\psi_{\scw}^{\heartsuit}|_{N_{\scw}^{\heartsuit}\cap g^{-1}Ng}$ is nontrivial, then $\Hom_{N_{\scw}^{\heartsuit}}(i_g,\psi_{\scw}^{\heartsuit})=0$. If $\psi_{\scw}^{\heartsuit}|_{N_{\scw}^{\heartsuit}\cap g^{-1}Ng}$ is trivial but induces a pair that is higher than the orbit $(k^n)_D$, then again $\Hom_{N_{\scw}^{\heartsuit}}(i_g,\psi_{\scw}^{\heartsuit})=0$. Thus, from the argument in Sect. \ref{sec:unfolding of FC}, we know that if $g\notin P\cdot P_{\scw}^{\heartsuit}$, then $\Hom_{N_{\scw}^{\heartsuit}}(i_g,\psi_{\scw}^{\heartsuit})=0$.

Let $d=\dim P\bs G^{\square,k}$. The induced representation $I(s,\theta)$ admits a filtration of $P_{\scw}^{\heartsuit}$-stable subspaces
\[
I^{(d)}(s,\theta)\subset \cdots \subset I^{(0)}(s,\theta)=I(s,\theta),
\]
where
\[
I^{(m)}(s,\theta)=\left\{f^{(s)}\in I(s,\theta)\mid \mathrm{Supp}(f^{(s)})\subset \bigcup_{\dim P\bs PwP_{\scw}^{\heartsuit}\geq m} PwP_{\scw}^{\heartsuit}\right\}.
\]
The quotient $I^{(m)}/I^{(m+1)}$ is the direct sum of spaces $I_w$, as $w$ ranges over a set of representatives $w$ of elements of $W(M)\bs W(G^{\square,k})$ with $\dim P\bs PwP_{\scw}^{\heartsuit}=m$, and where
\[
I_w=\ind_{w^{-1}Pw\cap P_{\scw}^{\heartsuit}}^{P_{\scw}^{\heartsuit}}(w^{-1}(\theta)).
\]

We now apply results from the previous section.
Let
\[
X=w^{-1}Pw\cap P_{\scw}^{\heartsuit}\bs P_{\scw}^{\heartsuit} \text{ and } Y=w^{-1}Pw\cap P_{\scw}^{\heartsuit}\bs P_{\scw}^{\heartsuit} /N_{\scw}^{\heartsuit}.
\] Then it is easy to check that $Y$ is homeomorphic to $w^{-1}Pw\cap M_{\scw}^{\heartsuit}\bs M_{\scw}^{\heartsuit}$ and is therefore an $l$-space. Let $p:X\to Y$ be the projection map.

We fix $w$ such that $\dim P\bs PwP_{\scw}^{\heartsuit}<d$. Observe that this is equivalent to $w\notin P\cdot P_{\scw}^{\heartsuit}$. We show that $J_{N_{\scw}^{\heartsuit},\psi_{\scw}^{\heartsuit}}(I_w)=0$, or equivalently $\Hom_{N_{\scw}^{\heartsuit}}(I_w,\psi_{\scw}^{\heartsuit})=0$. Recall that $I_w$ is the space of sections with compact support of an $l$-sheaf $\scf^w$ on $X=w^{-1}Pw\cap P_{\scw}^{\heartsuit}\bs P_{\scw}^{\heartsuit}$. An element in $\Hom_{N_{\scw}^{\heartsuit}}(I_w,\psi_{\scw}^{\heartsuit})$ is the same as a distribution $\scd$ on $\scf^w$ such that
\begin{equation}\label{eq:equi condition distribution}
u\cdot \scd = \psi_{\scw}^{\heartsuit}(u)\scd \text{ for all } u\in N_{\scw}^{\heartsuit}.
\end{equation}

We now use the localization principle. Let $y\in Y$. The restriction functor gives an $l$-sheaf in $\Sh(p^{-1}(y),N_{\scw}^{\heartsuit})$. Let $\tilde y\in P_{\scw}^{\heartsuit}$ be a representative of $y$. In our case,
\[
p^{-1}(y)=w^{-1}Pw\cap N_{\scw}^{\heartsuit}\bs \tilde y N_{\scw}^{\heartsuit}\simeq (w\tilde y)^{-1}P(w\tilde y)\cap N_{\scw}^{\heartsuit}\bs N_{\scw}^{\heartsuit}.
\]
By Proposition \ref{prop:composition of functors}, the representation obtained is the induced representation $i_{w\tilde y}$. By our discussion above, there are no nonzero distributions in $\fd(p^{-1}(y),\scf_{p^{-1}(y)})$ that satisfy \eqref{eq:equi condition distribution}. By Theorem \ref{thm:localization principle}, there is no nonzero distribution on $\scf^w$ such that \eqref{eq:equi condition distribution} holds.

We have shown that $J_{N_{\scw}^{\heartsuit},\psi_{\scw}^{\heartsuit}}(I_w)=0$ when $\dim P\bs PwP_{\scw}^{\heartsuit}<d$.
%The only orbit left is the open orbit $P\cdot P_{\scw}^{\heartsuit}=P\cdot N_{\scw}^{\heartsuit}$.
By exactness of the twisted Jacquet functor, the inclusion $I^{(d)}(s,\theta)\hookrightarrow I(s,\theta)$ induces an isomorphism
\[
J_{N_{\scw}^{\heartsuit},\psi_{\scw}^{\heartsuit}}(I^{(d)}(s,\theta))\cong J_{N_{\scw}^{\heartsuit},\psi_{\scw}^{\heartsuit}}(I(s,\theta)).
\]
The left-hand side is isomorphic to $J_{N_{\scw}^{\heartsuit}\cap P,\psi_{\scw}^{\heartsuit}}(\theta)$, which is one-dimensional since $\theta$ is a representation of type $(k,n)_D$.
\end{proof}

We now state a local result whose proof is close to the unfolding argument, at least for $\pi$ is a supercuspidal representation. We first define the character
$\kappa_\theta=\kappa_\theta^{\scw}$ of $G\times G$ by
\[
\kappa_\theta(g_1,g_2)=\chi_\theta(\nu(g_2)).
\]

\begin{Prop}
If $\pi$ is supercuspidal, then
\begin{equation}\label{eq:local multiplicity one}
\dim \Hom_{G\times G}(J_{N_{\scw}^{\bullet},\psi_{\scw}^{\bullet}}(I(s,\theta)),\kappa_\theta(\pi\boxtimes \pi^\vee))\leq 1.
\end{equation}
\end{Prop}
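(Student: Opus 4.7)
The plan is to mimic the global unfolding of Section~\ref{sec:unfolding} in the local setting, invoking Bernstein's localization principle (Theorem~\ref{thm:localization principle}) as in the proof of Proposition~\ref{prop:unfolding FC local} to handle the infinite double coset decomposition $P\bs G^{\square,k}/\iota(G\times G)N_{\scw}^{\bullet}$. By Frobenius reciprocity the bound is equivalent to showing
\[
\dim\Hom_{\iota(G\times G)\ltimes N_{\scw}^{\bullet}}\bigl(I(s,\theta),\, \kappa_\theta(\pi\boxtimes\pi^\vee)\otimes\psi_{\scw}^{\bullet}\bigr)\le 1.
\]
I would filter $I(s,\theta)$ by its $P\bs G^{\square,k}$-orbits under the right action of $\iota(G\times G)N_{\scw}^{\bullet}$ and analyze each subquotient separately.

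For each orbit with representative $\gamma$, set $L=W^{\Delta,k}\gamma$. The arguments of Sections~\ref{sec:FC in global integral} and~\ref{sec:unfolding}, combined with the local vanishing clause of Definition~\ref{def:local (k,n)} applied exactly as in Proposition~\ref{prop:unfolding FC local}, eliminate all orbits except those with $L\cap Y_{k-1}=\{0\}$; these are classified by the invariant $\kappa(L)$ of Section~\ref{sec:invariant}. For orbits with $\kappa(L)>0$, the local analogue of Proposition~\ref{prop:invariance property of integral}, supplied by Lemma~\ref{lem:invariance II local}, shows that on the associated subquotient the $(N_{\scw}^{\bullet},\psi_{\scw}^{\bullet})$-twisted Jacquet module carries an extra invariance by $\iota(\{1\}\times N^-)$ with $N^-$ the nontrivial unipotent radical introduced in Section~\ref{sec:invariant}. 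Any $G\times G$-equivariant map from this subquotient into $\kappa_\theta(\pi\boxtimes\pi^\vee)$ then factors through the Jacquet module $J_{N^-}(\pi^\vee)$, which vanishes because $\pi^\vee$ is supercuspidal. Hence these orbits contribute nothing.

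Only the open orbit $\kappa(L)=0$, represented by $\gamma=1$, remains. Since $\iota(G\times G)\cap P=\iota(G^{\diamondsuit})$, Proposition~\ref{prop:composition of functors} identifies the corresponding subquotient with $\ind_{\iota(G^{\diamondsuit})\ltimes(N_{\scw}^{\bullet}\cap P)}^{\iota(G\times G)\ltimes N_{\scw}^{\bullet}}(\theta\cdot\nu^s)$, where $N_{\scw}^{\bullet}\cap P\subset N$ acts trivially. Taking the $(N_{\scw}^{\bullet},\psi_{\scw}^{\bullet})$-Jacquet module and applying Frobenius reciprocity (together with Lemma~\ref{lem:modular quasi} to see that the modular quasicharacter is trivial on $\iota(G^{\diamondsuit})$) reduces the Hom space to
\[
\Hom_{G^{\diamondsuit}}\bigl(J_{N_{\scw}^{\bullet}\cap P,\psi_{\scw}^{\bullet}}(\theta\cdot\nu^s),\, (\pi\boxtimes\pi^\vee)|_{G^{\diamondsuit}}\bigr),
\]
where the character $\chi_\theta(\nu(\cdot))$ appearing on the left via Lemma~\ref{lem:invariance I local} cancels the character $\kappa_\theta$ on the right. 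The pair $(N_{\scw}^{\bullet}\cap P,\psi_{\scw}^{\bullet})$ lies in the orbit $(k^n)_D$ for $\GL_D(W^{\Delta,k})$, so by the multiplicity-one clause of Definition~\ref{def:local (k,n)} the Jacquet module is one-dimensional; the remaining Hom is one-dimensional by Schur's lemma, since $\pi\boxtimes\pi^\vee$ is irreducible and the diagonal pairing $\pi\otimes\pi^\vee\to\bc$ is unique up to scalar.

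The main obstacle will be the careful execution of the localization principle for the $\kappa(L)>0$ orbits: one must verify that the stalks of the subquotient sheaves really do produce representations induced from parabolic subgroups of $\iota(G\times G)\ltimes N_{\scw}^{\bullet}$ whose unipotent part contains $\iota(\{1\}\times N^-)$ after taking the twisted Jacquet module, so that the supercuspidality of $\pi^\vee$ applies cleanly. A related secondary difficulty is that the invariance argument of Proposition~\ref{prop:invariance} relies in its local form on the Bernstein-Zelevinsky lemma (Lemma~\ref{lem:invariance II local}), which is why the hypothesis that $F$ is non-Archimedean is indispensable here.
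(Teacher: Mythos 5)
Your proposal follows the paper's strategy closely: reduce via the Bruhat filtration and Bernstein localization to the open $P_{\scw}^{\bullet}$-orbit, apply the geometric lemma to the finite set of $\iota(G\times G)N_{\scw}^{\bullet}$-orbits indexed by $\kappa(L)$, kill the $\kappa(L)>0$ pieces using Lemma~\ref{lem:invariance II local} together with supercuspidality, and compute the $\kappa(L)=0$ contribution. This is the same decomposition and the same key lemmas that the paper uses.

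However, the final Frobenius step as written is incorrect. The subquotient at $\kappa(L)=0$ is the \emph{compact} induction $\ind_{G^\diamondsuit}^{G\times G}(\chi_\theta)$, and for $\ind$ from a non-open subgroup one cannot apply Frobenius reciprocity directly to land in $\Hom_{G^\diamondsuit}(J_{N_{\scw}^{\bullet}\cap P,\psi_{\scw}^{\bullet}}(\theta\cdot\nu^s), (\pi\boxtimes\pi^\vee)|_{G^\diamondsuit})$. After cancelling characters this would be $\Hom_{G}(\bc,\pi\otimes\pi^\vee)=(\pi\otimes\pi^\vee)^{G}$, which for an infinite-dimensional supercuspidal $\pi$ is zero, not one-dimensional (a $G$-invariant vector in $\pi\otimes\pi^\vee$ is a $G$-equivariant finite-rank endomorphism, and by Schur such an endomorphism is a scalar, hence not finite rank). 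The fact you actually want is the uniqueness of the pairing $\Hom_{G}(\pi^\vee\otimes\pi,\bc)$, which you correctly cite at the end, but it does not match the Hom space you wrote. The correct passage first dualizes, so that $\Hom_{G\times G}(\ind_{G^\diamondsuit}^{G\times G}(\chi_\theta),\kappa_\theta(\pi\boxtimes\pi^\vee))\cong\Hom_{G\times G}(\kappa_\theta^{-1}(\pi^\vee\boxtimes\pi),\Ind_{G^\diamondsuit}^{G\times G}(\chi_\theta^{-1}))$, and then Frobenius for full induction gives $\Hom_{G^\diamondsuit}(\pi^\vee\boxtimes\pi,\bc)$, which is one-dimensional. (Since your argument nominally produces a bound of zero rather than one, the stated inequality is technically not contradicted, but the derivation is wrong and would mislead anyone trying to show nonvanishing.) A further minor slip: $N_{\scw}^{\bullet}\cap P$ is \emph{not} contained in $N(W^{\Delta,k})$; it surjects onto the nontrivial unipotent radical in $M\simeq\GL_D(W^{\Delta,k})$ carrying the $(k^n)_D$-character, which is exactly why one takes the twisted Jacquet module $J_{N_{\scw}^{\bullet}\cap P,\psi_{\scw}^{\bullet}}(\theta)$ rather than having $N_{\scw}^{\bullet}\cap P$ act trivially.
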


\begin{proof}
We first argue as in the previous result. The induced representation $I(s,\theta)$ admits a filtration of $P_{\scw}^{\bullet}$-stable subspaces
\[
I^{(d)}(s,\theta)\subset \cdots \subset I^{(0)}(s,\theta)=I(s,\theta),
\]
where
\[
I^{(m)}(s,\theta)=\left\{f^{(s)}\in I(s,\theta)\mid \mathrm{Supp}(f^{(s)})\subset \bigcup_{\dim P\bs PwP_{\scw}^{\bullet}\geq m} PwP_{\scw}^{\bullet}\right\}.
\]
(Note that the meaning of these notations are different from those in the proof of Proposition \ref{prop:unfolding FC local}.) Using the argument Proposition \ref{prop:unfolding FC local}, together with appropriate changes as in Sect. \ref{sec:unfolding}, we can show that $I^{(d)}(s,\theta)\hookrightarrow I(s,\theta)$ induces an isomorphism
\[
J_{N_{\scw}^{\bullet},\psi_{\scw}^{\bullet}}(I^{(d)}(s,\theta))\simeq J_{N_{\scw}^{\bullet},\psi_{\scw}^{\bullet}}(I(s,\theta)).
\]
Here, $I^{(d)}(s,\theta)\simeq \ind_{P\cap P_{\scw}^{\bullet}}^{P_{\scw}^{\bullet}}(\theta)$.

Next, we follow the proof in \cite{HKS96} Sect. 4. We first want to understand the structure of $J_{N_{\scw}^{\bullet},\psi_{\scw}^{\bullet}}(I^{(d)}(s,\theta))$ as a representation of $G\times G$. Here, we can directly apply the Geometric Lemma \cite{BZ77} as
\[
P\cap P_{\scw}^{\bullet}\bs P_{\scw}^{\bullet}/\iota(G\times G)N_{\scw}^{\bullet} \simeq  P\bs P\cdot P_{\scw}^{\bullet}/\iota(G\times G)N_{\scw}^{\bullet}
\]
is a finite set. In Sect. \ref{sec:invariant} we have shown that an orbit in $P\bs P\cdot P_{\scw}^{\bullet}/\iota(G\times G)N_{\scw}^{\bullet}$ is determined by an invariant $\kappa(L)$.

Let $n_0$ be the Witt index of $\scw$. Then $0\leq \kappa(L)\leq n_0$. By \cite{BZ77} Theorem 5.2, $J_{N_{\scw}^{\bullet},\psi_{\scw}^{\bullet}}(I^{(d)}(s,\theta))$ admits a filtration by support
\[
J_{N_{\scw}^{\bullet},\psi_{\scw}^{\bullet}}(I^{(d)}(s,\theta))=J^{(n_0)}(s,\theta)\supset \cdots \supset J^{(0)}(s,\theta),
\]
such that
\[
Q^{(i)}(s,\theta):=J^{(i)}(s,\theta)/J^{(i-1)}(s,\theta)\simeq \ind_{\St_i}^{G\times G}(\theta_i).
\]
Here, $\St_i$ is the stabilizer of a representative $L_i$ such that $\kappa(L_i)=i$ and the representation $\theta_i$ is $J_{N_{\scw}^{\bullet}\cap P(L_i),\psi_{\scw}^{\bullet}}(\theta)$.

If $i>0$, we define $N^-$ as in Sect. \ref{sec:unfolding}. We can choose a representative $L_i$ so that $\iota(\{1\}\times N^-)$ lies in the stabilizer of the pair $(N_{\scw}^{\bullet}\cap P(L_i),\psi_{\scw}^{\bullet})$ and the action of $\iota(\{1\}\times N^-)$ on $J_{N_{\scw}^{\bullet}\cap P(L_i),\psi_{\scw}^{\bullet}}(\theta)$ is trivial. Since $\pi$ is supercuspidal, we know that $\Hom_{\{1\}\times N^-}(Q^{(i)}(s,\theta),\kappa_\theta(\pi\boxtimes \pi^\vee))=0$.

Now consider a nonzero homomorphism $\lam:J_{N_{\scw}^{\bullet},\psi_{\scw}^{\bullet}}(I(s,\theta))\to \kappa_\theta(\pi\boxtimes \pi^\vee)$. If the restriction of $\lam$ to $J^{(0)}(s,\theta)$ were zero, then there would a smallest $i$, with $i>0$, such that the restriction of $\lam$ to $J^{(i)}(s,\theta)$ is nonzero. This restriction would then factor through the quotient $Q^{(i)}(s,\theta)$, contradicting our assumption on $\pi$.
Thus under our assumption on $\pi$, the restriction map
\[
\Hom_{G\times G}(J_{N_{\scw}^{\bullet},\psi_{\scw}^{\bullet}}(I(s,\theta)),\kappa_\theta(\pi\boxtimes \pi^\vee))\to \Hom_{G\times G}(J^{(0)}(s,\theta),\kappa_\theta(\pi\boxtimes \pi^\vee))
\]
is injective.

For $i=0$, we can take $L_0=W^{\Delta,k}$ (or equivalently, $\gamma=1$) and thus $\St_0=G^\diamondsuit$. By our assumption, the representation $J_{N_{\scw}^{\bullet}\cap P(L_0),\psi_{\scw}^{\bullet}}(\theta)$ is simply $\chi_\theta$ and $J^{(0)}(s,\theta)=\ind_{G^\diamondsuit}^{G\times G}(\chi_\theta)$.

We finally have
\[
\Hom_{G\times G}(\ind_{G^\diamondsuit}^{G\times G}(\chi_\theta),\kappa_\theta(\pi\boxtimes \pi^\vee))=\Hom_{G\times G}(\kappa_\theta^{-1}(\pi^\vee \boxtimes \pi), \Ind_{G^\diamondsuit}^{G\times G}(\chi_\theta^{-1})).
\]
By the Frobenius reciprocity, this is isomorphic to $\Hom_{G^\diamondsuit}(\pi^\vee \boxtimes \pi, \bc)$, which is clearly one-dimensional. This completes the proof.
\end{proof}

\begin{Rem}
For general $\pi$, \eqref{eq:local multiplicity one} is true outside a set of discrete values of $s$. But we have to use some multiplicativity property of $\theta$, which is not discussed in this paper.
\end{Rem}

\bibliographystyle{alpha-abbrvsort}
%--------------Need to modify the references in the file: mybib.bib see example above.

%\bibliography{doubling-bib}

\begin{thebibliography}{CFGK19}

\bibitem[BZ77]{BZ77}
I.~N. Bernstein and A.~V. Zelevinsky.
\newblock Induced representations of reductive {${\mathfrak p}$}-adic groups.
  {I}.
\newblock {\em Ann. Sci. \'Ecole Norm. Sup. (4)}, 10(4):441--472, 1977.

\bibitem[Ber84]{Bernstein84}
Joseph~N. Bernstein.
\newblock {$P$}-invariant distributions on {${\rm GL}(N)$} and the
  classification of unitary representations of {${\rm GL}(N)$}
  (non-{A}rchimedean case).
\newblock In {\em Lie group representations, {II} ({C}ollege {P}ark, {M}d.,
  1982/1983)}, volume 1041 of {\em Lecture Notes in Math.}, pages 50--102.
  Springer, Berlin, 1984.

\bibitem[BZ76]{BZ76}
I.~N. Bern\v{s}te\u{\i}n and A.~V. Zelevinski\u{\i}.
\newblock Representations of the group {$GL(n,F),$} where {$F$} is a local
  non-{A}rchimedean field.
\newblock {\em Uspehi Mat. Nauk}, 31(3(189)):5--70, 1976.

\bibitem[BD01]{BD01}
Jean-Luc Brylinski and Pierre Deligne.
\newblock Central extensions of reductive groups by {$\bold K_2$}.
\newblock {\em Publ. Math. Inst. Hautes \'Etudes Sci.}, (94):5--85, 2001.

\bibitem[Bum97]{Bump97}
Daniel Bump.
\newblock {\em Automorphic forms and representations}, volume~55 of {\em
  Cambridge Studies in Advanced Mathematics}.
\newblock Cambridge University Press, Cambridge, 1997.

\bibitem[CFGK19]{CFGK19}
Yuanqing Cai, Solomon Friedberg, David Ginzburg, and Eyal Kaplan.
\newblock Doubling constructions and tensor product {$L$}-functions: the linear
  case.
\newblock {\em Invent. Math.}, 217(3):985--1068, 2019.

\bibitem[CFK18]{CFK}
Yuanqing Cai, Solomon Friedberg, and Eyal Kaplan.
\newblock Doubling constructions: local and global theory, with an application
  to global functoriality for non-generic cuspidal representations.
\newblock {\em arXiv preprint arXiv:1802.02637}, 2018.

\bibitem[Gan12]{Gan12}
Wee~Teck Gan.
\newblock Doubling zeta integrals and local factors for metaplectic groups.
\newblock {\em Nagoya Math. J.}, 208:67--95, 2012.

\bibitem[Gin06]{Ginzburg06}
David Ginzburg.
\newblock Certain conjectures relating unipotent orbits to automorphic
  representations.
\newblock {\em Israel J. Math.}, 151:323--355, 2006.

\bibitem[GGS17]{GGS17}
Raul Gomez, Dmitry Gourevitch, and Siddhartha Sahi.
\newblock Generalized and degenerate {W}hittaker models.
\newblock {\em Compos. Math.}, 153(2):223--256, 2017.

\bibitem[GGSar]{GGS}
Raul Gomez, Dmitry Gourevitch, and Siddhartha Sahi.
\newblock Whittaker supports for representations of reductive groups.
\newblock {\em Ann. Inst. Fourier (Grenoble)}, to appear.

\bibitem[GS19]{GS19}
Dmitry Gourevitch and Siddhartha Sahi.
\newblock Generalized and degenerate {W}hittaker quotients and {F}ourier
  coefficients.
\newblock In {\em Representations of reductive groups}, volume 101 of {\em
  Proc. Sympos. Pure Math.}, pages 133--154. Amer. Math. Soc., Providence, RI,
  2019.

\bibitem[HKS96]{HKS96}
Michael Harris, Stephen~S. Kudla, and William~J. Sweet.
\newblock Theta dichotomy for unitary groups.
\newblock {\em J. Amer. Math. Soc.}, 9(4):941--1004, 1996.

\bibitem[JL13]{JL13}
Dihua Jiang and Baiying Liu.
\newblock On {F}ourier coefficients of automorphic forms of {${\rm GL}(n)$}.
\newblock {\em Int. Math. Res. Not. IMRN}, (17):4029--4071, 2013.

\bibitem[Kakar]{Kakuhama}
Hirotaka Kakuhama.
\newblock On the local factors of irreducible representations of quaternionic
  unitary groups.
\newblock {\em Manuscripta Math.}, to appear.

\bibitem[Kap19]{Kaplan}
Eyal Kaplan.
\newblock Doubling constructions and tensor product {$L$}-functions: coverings
  of the symplectic group.
\newblock {\em arXiv preprint arXiv:1902.00880}, 2019.

\bibitem[Kar79]{Karel79}
Martin~L. Karel.
\newblock Functional equations of {W}hittaker functions on {$p$}-adic groups.
\newblock {\em Amer. J. Math.}, 101(6):1303--1325, 1979.

\bibitem[LR05]{LR05}
Erez~M. Lapid and Stephen Rallis.
\newblock On the local factors of representations of classical groups.
\newblock In {\em Automorphic representations, {$L$}-functions and
  applications: progress and prospects}, volume~11 of {\em Ohio State Univ.
  Math. Res. Inst. Publ.}, pages 309--359. de Gruyter, Berlin, 2005.

\bibitem[MW87]{MW87}
C.~M{\oe}glin and J.-L. Waldspurger.
\newblock Mod\`eles de {W}hittaker d\'eg\'en\'er\'es pour des groupes
  {$p$}-adiques.
\newblock {\em Math. Z.}, 196(3):427--452, 1987.

\bibitem[PSR87]{PSR87}
I.~Piatetski-Shapiro and S.~Rallis.
\newblock {\em ${L}$-functions for the classical groups}, volume 1254 of {\em
  Lecture Notes in Math.}
\newblock Springer-Verlag, New York, 1987.

\bibitem[Yam14]{Yamana14}
Shunsuke Yamana.
\newblock L-functions and theta correspondence for classical groups.
\newblock {\em Invent. Math.}, 196(3):651--732, 2014.

\bibitem[Zel80]{Zelevinsky80}
A.~V. Zelevinsky.
\newblock Induced representations of reductive {${\mathfrak p}$}-adic groups.
  {II}. {O}n irreducible representations of {${\rm GL}(n)$}.
\newblock {\em Ann. Sci. \'Ecole Norm. Sup. (4)}, 13(2):165--210, 1980.

\end{thebibliography}

\end{document}